\numberwithin{equation}{section}
\newtheorem{theo}{Theorem}[section]
\newtheorem{theorem}{Theorem}[section]
\newtheorem{assumpintro}{Assumption}[section]
\newtheorem{lem}[theo]{Lemma}
\newtheorem{lemma}[theo]{Lemma}
\newtheorem{prop}[theo]{Proposition}
\newtheorem{proposition}[theo]{Proposition}
\theoremstyle{definition}
\newtheorem{defi}[theo]{Definition}
\newtheorem{definition}[theo]{Definition}
\theoremstyle{remark}
\newtheorem{rem}[theo]{Remark}
\newcommand\tempskipped[1]{}
\newcommand\C{\mathbb{C}}
\newcommand\R{\mathbb{R}}
\newcommand\E{\mathbb{E}}
\newcommand\cst{\operatorname{cst}}
\renewcommand\Re{\operatorname{Re}}
\renewcommand\Im{\operatorname{Im}}
\newcommand\osc{\operatorname{osc}}
\newcommand\cZ{\mathcal{Z}}
\newcommand\cE{\mathcal{E}}
\newcommand\cQ{\mathcal{Q}}
\newcommand\cS{\mathcal{S}}
\newcommand\cR{\mathcal{R}}
\newcommand\cX{\mathcal{X}}
\newcommand\cT{\mathcal{T}}
\newcommand\dm{\diamond}
\newcommand\Dm{\diamondsuit}
\def\LipKd{{\mbox{\textsc{Lip(}$\kappa$\textsc{,}$\delta$\textsc{)}}}}
\def\ExpFat{{{\mbox{\textsc{Exp-Fat(}$\delta$\textsc{,}$\rho$\textsc{)}}}}}
\def\OneExpFat{{{\mbox{\textsc{Exp-Fat(}$\delta$\textsc{,}$1$\textsc{)}}}}}
\newcommand\Unif{{\mbox{\textsc{Unif(}$\delta$\textsc{)}}}}
\newcommand\vcirc[1]{v^\circ_1,\ldots,v^\circ_{#1}}
\newcommand\svcirc[1]{\sigma_{v^\circ_1}\ldots \sigma_{v^\circ_{#1}}}
\newcommand\vbullet[1]{v^\bullet_1,\ldots,v^\bullet_{#1}}
\newcommand\muvbullet[1]{\mu_{v^\bullet_1}\ldots\mu_{v^\bullet_{#1}}}
\begin{document}

\title{Crossing estimates for the Ising model on general s-embeddings}

\author[Rémy Mahfouf]{Rémy Mahfouf$^\mathrm{A}$}

\thanks{\textsc{${}^\mathrm{A}$ Université de Genève.}}

\thanks{\emph{E-mail:} \texttt{remy.mahfouf@unige.ch}}

\maketitle

\begin{abstract}
We prove Russo–Seymour–Welsh-type crossing estimates for the FK–Ising model on general s-embeddings whose origami map has an asymptotic Lipschitz constant strictly smaller than $1$, provided it satisfies a mild non-degeneracy assumption. This result extends the work of \cite{Che20} and provides a general framework to prove that the usual connection probabilities between the boundaries of large scale boxes remain bounded away from $0$ and $1$. We also explain that one cannot prove similar estimates without such an assumption on the origami map, and this allows us to propose a notion of a critical model for generic planar graphs, which can also be rephrased from the perspective of the associated propagator operator. Our theorem re-establishes, along the way, corresponding results in almost all previously known setups and also treats new ones of interest.
\end{abstract}

\section{Introduction, main results and perspectives}\label{sec:introduction}
\setcounter{equation}{0}

\subsection{General context} The Ising model, introduced a century ago by Lenz, is one of the most studied models in statistical mechanics. Its planar version (i.e., the model on a planar graph with nearest-neighbor interactions) has received extensive attention from both physicists and mathematicians and gives rise to numerous local and global observables that can be computed exactly (see, e.g., the monographs \cite{friedli-velenik-book,mccoy-wu-book,palmer2007planar}). In this article, we focus on the model with no external magnetic field and, contrary to usual conventions, we assign spins to \emph{faces} (denoted by $G^\circ$) of a planar graph $G$ (whose vertices are denoted by $G^{\bullet}$). When $G$ is a finite connected graph and $\beta > 0$ a positive number (called the inverse temperature), one can attach to each edge $e\in E(G)$ separating the two faces $v^\circ_{\pm}(e)\in G^\circ$ a coupling constant $J_e>0$ and construct a discrete probabilistic model, whose partition function is given by
\begin{equation}
\label{eq:intro-Zcirc}
\mathcal{Z}(G)\ :=\ \sum_{\sigma:G^\circ\to\{\pm 1\}}\exp\big[\,\beta\sum_{e\in E(G)}J_e\sigma_{v^\circ_-(e)}\sigma_{v^\circ_+(e)}\,\big].
\end{equation}
The domain walls representation (see e.g. \cite[Sec 1.2]{CCK}) allows to rewrite $\cZ(G)$ as
\[
\textstyle \cZ(G)\ =\ 2\prod_{e\in E(G)}(x(e))^{-1/2}\times\,\sum_{C\in\cE(G)}\prod_{e\in C}x(e), \text{ where } x(e):=\exp[-2\beta J_e]
\]
and $\cE(G)$ denotes the set of even subgraphs of $G$. By naturally identifying an edge $e$ of $G$ to the associated face $z(e)$ of the bipartite graph $\Lambda(G):=G^{\bullet} \cup G^\circ$, one can construct an \emph{abstract parametrization} (i.e. a priori without any geometric interpretation) of the coupling constant $x(e)$ given by
\begin{equation}
\label{eq:x=tan-theta} \theta_{z(e)}\ :=\ 2\arctan x(e)\ \in\ (0,\tfrac{1}{2}\pi).
\end{equation}
This abstract definition is purely combinatorial and thus does \emph{not} require fixing an embedding of~$G$ into~$\C$: this fact was used by Chelkak to introduce the notion of \emph{s-embeddings} in \cite{Ch-ICM18,Che20}. The overall goal of his construction is to provide an embedding procedure that allows the study of large-scale properties of weighted planar graphs $(G,x)$ carrying critical or near-critical weights, depending on its collection of edge weights $(x(e))_{e\in E}$, including those that are locally very irregular, in a spirit similar to works such as, e.g., circle packing embeddings \cite{GGJN}, Tutte's embeddings \cite{gwynne2021tutte}, or more recently Cardy's embeddings \cite{holden2023convergence}. The notion of (near-)criticality, which we understand here to be related to the existence of a non-trivial scaling limit as well as the absence of infinite clusters for primal and dual models at the same time, does not yet seem to have been proposed for generic planar graphs except in particular cases, the most famous examples being periodic lattices \cite{cimasoni-duminil}, the Z-invariant setup \cite{BdTR2, park-iso}, or Lis' circle packings \cite{lis-kites}. Therefore, one aim of this paper is to propose a notion of criticality for general planar graphs, which can be read from the way the graph is embedded into the plane. As an example, the square lattice chosen with critical or off-critical weights will lead to embeddings with drastically different large-scale properties (see \cite[Figure 2]{Che20}). More importantly, this allows us to reformulate a notion of criticality (for crossing estimates) from the point of view of the spectral properties of the operator associated with the propagation equation \eqref{eq:3-terms} (see the question formulated in Section \ref{sub:novelties} for more details).

The route taken in \cite{Che20} was to generalize the notion of discrete fermionic observables, in the spirit of the pioneering work of Smirnov \cite{Smi-ICM06,Smirnov_Ising} based on discrete complex analysis techniques. It is centered on constructing an embedding procedure that is heavily tied to a combinatorial version of the associated fermionic local relations. In particular, the construction extends to a much larger class of graphs than simply the isoradial or periodic ones, and we explore here its application to crossing probabilities. It is worth noting that the notion of s-embeddings is encapsulated in the more general framework of \emph{t-embeddings} or \emph{Coulomb gauges}, in the context of the bipartite dimer model \cite{KLRR,CLR1,CLR2}. This fact allows one to benefit from the regularity theory of discrete harmonic and holomorphic functions developed in \cite[Section 6]{CLR1}, as well as an existence statement for finite planar graphs in \cite[Section 7]{KLRR}. The s-embedding setup has already proved its relevance in \cite[Theorem 1.2]{Che20} by settling the question of conformal invariance for the critical double-periodic graphs (the criticality condition in this setup was derived by Cimasoni and Duminil-Copin in \cite[Theorem 1.1]{cimasoni-duminil}), but also for regular graphs with an origami function satisfying $\cQ=O(\delta)$ (see Definition \ref{def:cQ-def} for more details). In the critical double-periodic case, even finding the correct canonical embedding \cite[Lemma 2.3]{Che20} and proving the convergence of FK interfaces to SLE(16/3) remained open for nearly a decade. This allowed one to go one step further regarding universality with respect to the local lattice details (see also similar results on critical and near-critical isoradial grids for correlation functions in \cite{ChSmi2,CHI,park-iso,CHI-mixed,park2018massive,hon-smi,Cim-universality}). We hope that the present framework will help to study the Ising model in some random environments (see the example of a random triangulation decorated with the Ising model presented in Section \ref{sub:explicit-construction}), targeting critical random maps equipped with the Ising model—which is conjectured to converge to Liouville Quantum Gravity (e.g., see~\cite{duplantier-sheffield-LQG})—as well as deterministic graphs with random coupling constants (e.g., $\mathbb{Z}^2$ with i.i.d.\ coupling constants studied numerically in \cite{wang1990two}). We emphasize that the current paper does not treat these last two difficult questions. Still, it is worth mentioning that the question of the i.i.d.\ coupling constants recently saw some progress in \cite{Mah25}, opening a route for a full understanding of this conjecture.

\subsection{Definition of the embedding and the associated scale}

In order to keep the presentation compact, we postpone to Section \ref{sec:definitions} the precise definition of the construction of a proper s-embeddings $\cS $ and give a rather informal and concrete review on what a tiling obtained following the construction of \cite{Che20} looks like. One starts with a \emph{weighted} planar graph $(G,x)$ with the combinatorics of the plane (or of the sphere in the finite case), whose vertices are denoted by $G^{\bullet}$ and faces by $G^{\circ}$. This graph is defined up to homeomorphisms preserving the cyclic order of edges at each vertex. The graph $\Lambda(G):=G^{\bullet}\cup G^{\circ} $ can be viewed as a bipartite graph with edges connecting neighbors of different color in $ \Lambda(G)$. Fix a quadrilateral $z=(v_0^\bullet v_0^\circ  v_1^\bullet  v_1^\circ)$, corresponding to a face of $\Lambda(G)$. A proper s-embedding of $G$ can be viewed as a map $\cS :\Lambda(G) \rightarrow \mathbb{C} $, such that all its edges are straight segments, and that all the faces of $\Lambda(G) $ (except maybe the outer face in the case of a finite graphs) are \emph{tangential quadrilaterals}. More precisely, each face $z$ of $\Lambda(G)$ is mapped to a  quadrilateral $\cS^{\diamond}(z):=(\cS(v^{\bullet}_{0} )\cS(v^{\circ}_{0} )\cS(v^{\bullet}_{1})\cS(v^{\circ}_{1}))$ which is tangential to a circle. To be more precise, one requires that the four lines containing the edges $\cS^{\diamond}(z)$ are tangential to a circle whose radius is denoted by $r_z$, including the case of non-convex quads. The embedding is called \emph{proper and non-degenerate} if different faces do not overlap each other and if none of the quadrilaterals $\cS^{\diamond}(z)$ degenerates to a segment. It is possible to recover the Ising weight attached to an edge from the angles of the associated tangential quadrilateral using the relation \eqref{eq:theta-from-S}. In particular, the overall idea is not based upon finding special weights that fit an embedded graph, but goes the other way around. Moreover, there are typically many different pictures for the same abstract graph, and all are as legitimate from the discrete complex analysis perspective. Those embeddings are stable under rotation, translation, scaling and complex conjugations.

The second object of crucial importance in the s-embeddings framework is the so-called origami map $\cQ $, recalled in Definition \ref{def:cQ-def}. In words, the origami map $\cQ :\Lambda(G) \rightarrow \mathbb{R} $ is a real valued function, defined up to additive constant, such that its increments between two neighboring vertices $\cS(v^\bullet) \sim \cS(v^\circ)$ are given by the local rule $\cQ(\cS(v^\bullet))-\cQ(\cS(v^\circ)) := |\cS(v^\bullet)-\cS(v^\circ) | $. This means that $\cQ$ adds edge lengths when going from vertices of $ \cS(G^{\circ})$ to vertices of $\cS(G^{\bullet})$ and subtracts lengths when traveling in the other direction. This definition is indeed locally consistent as the alternate sum of edge-lengths in a tangential quadrilateral vanishes. It is easy to see that the function $\cQ $ is automatically $1$-Lipschitz (as a map in the $\cS $ plane). In particular, it appears that some criticality of the model regarding crossing estimates can be simply analyzed via the behavior of the function $\cQ $.

The preceding sentence looks at first sight unclear, since to define the notion of large scale behavior, one should first define a notion of \emph{scale} of the embedding. A priori, there is no natural notion of mesh size of a lattice in a highly irregular grid formed of tangential quadrilaterals. This can be done using the origami map $\cQ $ and the assumption \LipKd\,, as originally defined in \cite{CLR1}. 

\begin{assumpintro}[\LipKd] We say that the embedding $\cS $ satisfies the assumption $\textup{Lip}(\kappa,\delta)$  for some positive constant $\kappa<1$ and some $\delta >0 $ if for any $v,v' $ vertices of $\Lambda(G) $
\begin{equation}
\label{eq:LipKd}
|\cQ(v')-\cQ(v)|\le\kappa\cdot |\cS(v')-\cS (v)|\quad \text{if}\quad |\cS (v')-\cS (v)|\ge\delta.
\end{equation}
\end{assumpintro}

This allows to define the notion of \emph{scale} of the embedding $\cS$.

\begin{definition}
We say that an s-embedding $\cS $ covering an open set $U\subseteq \mathbb{C}$ has a scale $\delta $ for the constant $ \kappa <1$ if 
\begin{equation}
\delta = \delta^{\kappa} = \inf \{ \tilde{\delta} >0, \, \textrm{Lip}(\kappa, \tilde{\delta}) \textrm{ holds} \}.
\end{equation}
In that case, we write $\cS = \cS^{\delta} = \cS^{\delta^\kappa} $ (leaving the $\kappa $ superscript unwritten).
\end{definition}

In words, for some positive $\kappa<1$, the scale of the embedding is the minimal length $\delta$ at which $\cQ$ becomes $\kappa$-Lipschitz. Regarding the results stated in this paper, the dependence on $\kappa$ will play a role in the bounds of the crossing estimates but not in the qualitative results themselves, as long as $\kappa$ remains bounded away from $1$. The dichotomy in the behaviors of the statistical mechanics model occurs when the origami map $\cQ$ has an optimal Lipschitz constant (at large graph distances) strictly smaller than $1$ or exactly equal to $1$. In particular, when speaking about the scaling limit of a sequence of s-embeddings $(\cS^\delta)_{\delta>0}$, it is taken along subsequences of s-embeddings $(\cS^{\delta_n})_{\delta_n}$ with $\delta_{n}\rightarrow 0$ as $n\rightarrow \infty$, and all s-embeddings $\cS^{\delta_n}$ satisfy $\textrm{Lip}(\kappa,\delta_n)$ for the \emph{same} $\kappa<1$. For grids with angles bounded from below and edge lengths comparable to some $\delta$ (setup denoted by \Unif\ in \cite{Che20}), the definition of the scale using \LipKd\ coincides (up to an $O(1)$ factor) with $\delta$. It is notable that there exists an even more natural way to embed a planar graph as a space-like surface in Minkowski space $\mathbb{R}^{2,1}$, taking the origami map as the third coordinate (see the discussion in Section \ref{sub:optimality}). The planar s-embedding setup is then also stable under some naturally associated isometries in $\mathbb{R}^{2,1}$. This passage to $\mathbb{R}^{2,1}$ is not artificial, as it is the natural environment in which to study the continuum scaling limit of fermionic observables in the s-embeddings context (see \cite[Section 2.7]{Che20} or the convergence statements developed in \cite[Chapter 6]{MahPHD}). In the rest of the article, the estimates of the type $O$ will depend on the parameter $\kappa<1$.

\begin{figure}
\includegraphics[clip, scale=0.40]{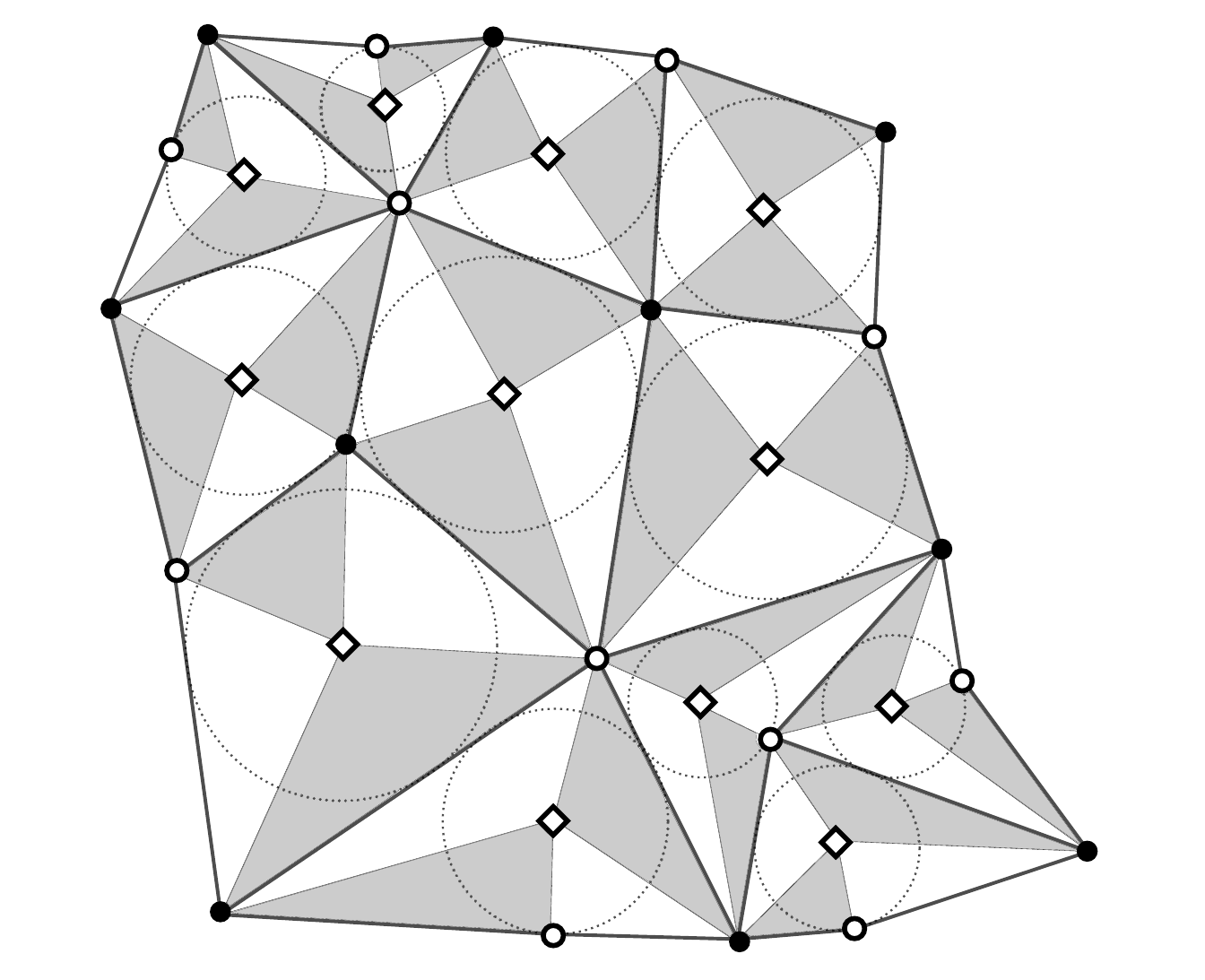}
\caption{A piece of an s-embedding. The vertices of $G^\bullet $ are denoted by black dots, those of $G^\circ $ by white dots, and the center of tangential quadrilaterals with diamonds. The tangential circles are dashed. The bipartite splinting of each face of $\Lambda(G)$ in four triangles corresponds to the dimer identification under the s/t-embeddings correspondence (see \cite[Section 2.3]{Che20} for more details.) that follows the bosonisation rules of \cite{dubedat2011exact}.}
\end{figure}

\subsection{Main Results}

In the current paper, we prove Russo–Seymour–Welsh crossing estimates (see \cite[Chapters 4 and 5]{duminil-parafermions} for precise definitions and for the relation between the spin-Ising model and the FK model, as well as the statement on the critical square lattice) for the FK–Ising model on an s-embedding that satisfies the condition \LipKd\ for some $\kappa < 1$. We believe this is the crucial assumption of physical significance for the theorems of the present paper to hold. The estimates are established starting at some scale $O(\rho)$, provided that the embedding satisfies the non-degeneracy assumption \ExpFat. The assumption \LipKd\ is optimal and cannot be weakened: we present graphs (corresponding to known off-critical systems) where both \LipKd\ (for any $\kappa < 1$) and the RSW property fail. Moreover, as explained in Section \ref{sub:optimality}, one \emph{cannot} hope to prove crossing estimates bounded away from $0$ and $1$ without an assumption of this kind. Indeed, this assumption prevents the construction of a \emph{different embedding of the same statistical mechanics model} that is heavily stretched in one direction, while its optimal Lipschitz constant can be made arbitrarily close to $1$.

In this article, \emph{we do not rely on any bounded angle property, comparable edge-length assumptions, symmetry, or translation invariance}. Instead, we use the scale defined via the assumption \LipKd\ together with discrete complex analysis techniques. Our approach answers a question posed in \cite[Section 1.4 (I)]{Che20} and treats the general case $\cQ\not\to 0$, relaxing constraints on the local and global geometry of the embedding. This provides one of the most flexible frameworks known to date where crossing estimates are available. In this setup, the system behaves qualitatively like the usual critical model on the square lattice: macroscopic wired clusters exist, correlations decay polynomially, and FK interfaces are precompact, among other features.

As already mentioned, we believe that the main conceptual assumption to prove Theorems \ref{thm:positive-magnetization} and \ref{thm:RSW} is exactly \LipKd\ (see the discussion in Section \ref{sub:optimality}). Our proof relies on discrete complex analysis techniques, which require establishing the precompactness of discrete s-holomorphic functions and avoiding potential pathological behaviors, such as exponential blow-up of discrete observables. To achieve this, we add a mild assumption denoted \ExpFat, which prescribes the admissible level of local degeneracy for the embedding. This restriction ensures the precompactness of s-holomorphic functions via Theorem \ref{thm:F-via-HF} and follows the formalism of \cite[Assumption 1.2]{CLR1}.
\begin{assumpintro}\label{assump:ExpFat} We say that a proper s-embeddings $\cS^\delta$ satisfies the assumption \ExpFat\ for some $\rho >0 $ on an open subset $U\subset\C$ if 
\begin{center} after removing all quads $(\cS^\delta)^\dm(z)$ with $r_z\ge \delta\exp(-\rho \delta^{-1})$ from $U$, all vertex-connected components have a diameter at most $\rho$.
\end{center}
\end{assumpintro}

Under the assumptions \LipKd\ and \ExpFat, we show that boxes of size larger than $O(\rho)$ satisfy the usual strong Russo–Seymour–Welsh box crossing property. Before diving into the details, let us informally explain how this reads on the critical square lattice. Fix a large square of size $\Lambda_n:=n\times n$ and consider the Ising model on the faces of this square, with four wired/free/wired/free alternating boundary arcs. This means that the spins attached to the upper horizontal boundary represent a single spin $\sigma_{\textrm{top}}$, while the spins attached to the lower horizontal boundary represent \emph{another} single spin $\sigma_{\textrm{bot}}$. Finally, the spins on the vertical boundaries of $\Lambda_n $ are individual. In this setup, the strong box crossing property at criticality states that the correlation $\mathbb{E}[\sigma_{\textrm{top}}\sigma_{\textrm{bot}}]$ remains bounded away from $0$ and $1$, uniformly in $n$, in sharp contrast with the off-critical phase of the model, where this correlation degenerates to $0$ or $1$. For the random cluster model at the critical value, one can consider the annulus $A_n$ between the boxes $\Lambda_{n/2}$ and $\Lambda_n$, equipped with free boundary conditions at both the inner and outer boundaries of $A_n$. In this setup, the strong box crossing property asserts that, with a positive probability (uniformly bounded from below in $n$), there exists a circuit of open edges in $A_n$ surrounding $\Lambda_{n/2}$. Such crossing estimates were first proven for Bernoulli percolation in the works of Russo \cite{Rus78,Rus81} and Seymour and Welsh \cite{SW78}. The first proof for the FK–Ising model on straight rectangles is due to Duminil-Copin, Hongler, and Nolin \cite{DCHN}, and it was later generalized by Chelkak, Duminil-Copin, and Hongler to rough shapes (depending on the domain's extremal length) in \cite{CDH}. The proof of this theorem begins by establishing a lower bound for the boundary correlation in the spin-Ising model with four alternating boundary conditions. 

Fix $\ell>0$ and consider the square $[-20\ell;20\ell]^2$ centered at the origin of the complex plane. In the following theorem, we consider a piece of a proper s-embedding $\cS^{\delta}$ covering $[-20\ell;20\ell]^2$, satisfying the assumptions \LipKd\ and \ExpFat. We say that a simply connected domain $\cR^\delta \subset \cS^\delta(\Lambda(G))$ is an approximation up to $10\delta$ of the rectangle $\mathcal{R}:=[-\ell;\ell]\times [-3\ell; 3\ell] \subset [-20\ell;20\ell]^2$ if the Hausdorff distance between the discrete boundary of $\cR^\delta$ (seen as a piecewise linear path formed by edges of $\cS^\delta(\Lambda(G))$) and the boundary of $\mathcal{R}$ is smaller than $10\delta$. 

One can split the boundary of $\cR^\delta$ into four arcs, composed of segments of $\cS^{\delta}(\Lambda(G))$ together with four edges of $\cS^{\delta}(\Lambda(G))$ identified with the corners $a^\delta,b^\delta,c^\delta,d^\delta$ (see Figure \ref{fig:Intro}). This allows us to define the Ising weights on $\cR^\delta$, attached to the geometry of $\cS^{\delta}$ via \eqref{eq:theta-from-S}. We then define the probability measure $\mathbb{P}^{\circ \bullet \circ \bullet}_{\cR^\delta}$ corresponding to the Ising model on $\cR^\delta$ with \emph{wired} boundary conditions on the arcs $(b^\delta c^\delta)^\circ$ and $(d^\delta a^\delta)^\circ$, approximating the horizontal segments of $[-\ell;\ell]\times [-3\ell; 3\ell]$, and \emph{free} boundary conditions on the arcs $(a^\delta b^\delta)^\bullet$ and $(c^\delta d^\delta)^\bullet$, approximating the vertical segments. This means that all the spins along the arc $(d^\delta a^\delta)^\circ$ (drawn in red in Figure \ref{fig:Intro}) represent a \emph{single spin} $\sigma_{\textrm{top}}$, while all the spins along the arc $(b^\delta c^\delta)^\circ$ (drawn in green in Figure \ref{fig:Intro}) represent \emph{another single spin} $\sigma_{\textrm{bot}}$. One then has the following theorem.
\begin{theorem}\label{thm:positive-magnetization} 
In the previous setup, there exist positive constants $L_0=L_0(\kappa) $ and $c_0=c_0(\kappa) $, only depending on $\kappa < 1 $, such that for any $l\geq L_0 \rho$ one has
\[
\E^{\circ\bullet\circ\bullet}_{\cR^\delta}[\sigma_{\mathrm{top}}\sigma_{\mathrm{bot}}]\ \ge\ c_0(\kappa).
\]
\end{theorem}
The theorem is a consequence of monotonicity with respect to boundary conditions (see e.g. \cite[Section 4]{duminil-parafermions} and references therein) and the proof made in a similar statement made in Section \ref{sec:proof}, in the case of the special discretizations constructed in Section \ref{sec:geometry}. Note that the same theorem holds for the dual model, defined using Kramers-Wannier duality. The next theorem classically follows from Theorem \ref{thm:positive-magnetization}. Denote the annulus
\begin{equation*}
	\Box(\ell):= \Big( [-3\ell,3\ell] \times [-3\ell,3\ell] \Big)\setminus\Big((-\ell,\ell)\times(-\ell,\ell) \Big).
\end{equation*}
We keep working with a proper $s$-embedding satisfying \LipKd\,, and \ExpFat\, covering the domain  $[-20\ell;20\ell]^2 $. In that context, fix an approximation $\Box^{\delta}(\ell)$ of $\Box(\ell)$ up to $10\delta$ (in the Hausdorff sense), whose boundary is made by edges of $\cS^{\delta}(\Lambda(G))$. This allows to define the probability measure $\mathbb{P}^{\operatorname{free}}_{\Box^{\delta}(\ell)}$ for the random cluster model on $\Box^{\delta}(\ell)$ with weights coming from the Edwards-Sokal coupling of the Ising model on $\cS^\delta$, with free boundary conditions at both the inner and outer boundaries of $\Box^{\delta}(\ell)$. Recall that we call a \emph{circuit} of open edges in $\Box^{\delta} (\ell)$ a sequence of open neighbouring edges (for the random cluster model) that surround the inner boundary of $\Box^{\delta}(\ell)$. Then one has the following theorem.
\begin{theorem}\label{thm:RSW}
In the previous setup, there exist positive constants $L'_0 $,$p_0$, only depending on $\kappa$, such that for any $\ell \geq L'_0 \rho $, one has
\[
 \mathbb{P}^{\operatorname{free}}_{\Box^{\delta} (\ell)}\bigl[\,\mathrm{there~exists~a~circuit~of~open~edges~in}~{\Box^{\delta} (\ell)}\,\bigr]\ \ge\ p_0.
\]
A similar uniform estimate holds for the dual model.
\end{theorem}

\begin{figure}
\hspace{-10mm}\begin{minipage}{0.33\textwidth}
\includegraphics[clip, width=1.4\textwidth]{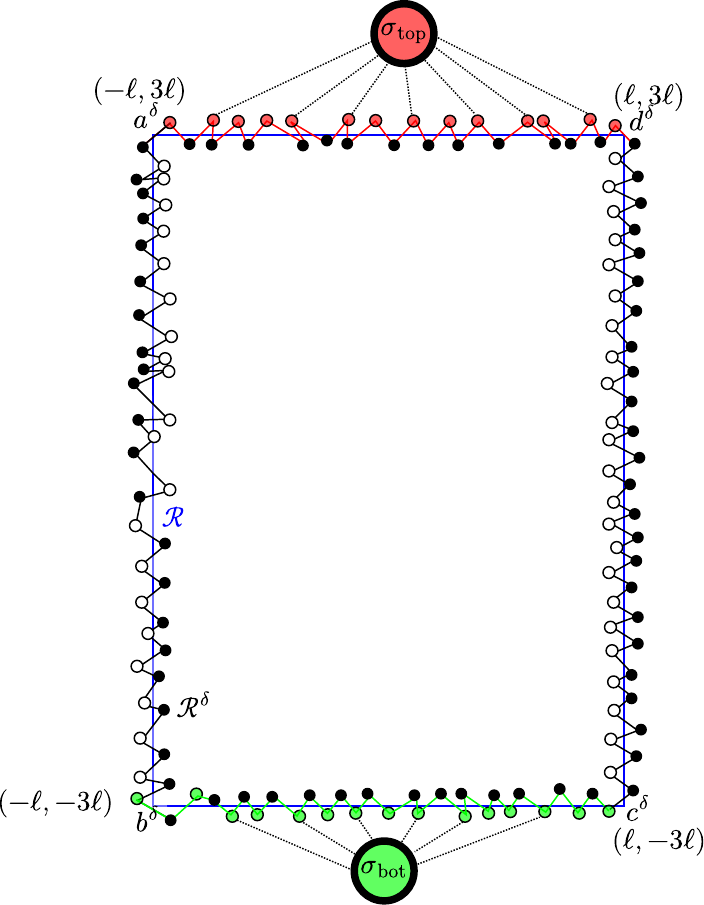}
\end{minipage}\hskip 0.15\textwidth \begin{minipage}{0.33\textwidth}
\includegraphics[clip, width=1.4\textwidth]{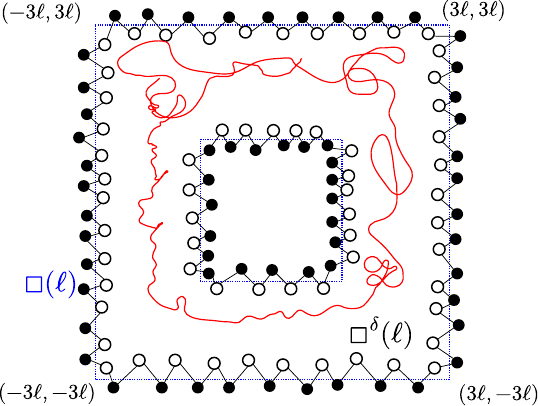}
\end{minipage}
\caption{Setup of Theorems \ref{thm:positive-magnetization} and \ref{thm:RSW}. (Left): Rectangle \textcolor{blue}{$\mathcal{R}$} approximated by the discrete domain $\mathcal{R}^\delta \subset \cS^{\delta}$, whose respective boundaries lie at distance at most $ 10\delta $ from each other. All the spins along the red arc $(d^\delta a^\delta)^\circ$ represent a single spin $\sigma_{\textrm{top}}$ and the all spins along the green arc $(b^\delta c^\delta)^\circ$ (drawn in green in Figure \ref{fig:Intro}) represent a single spin $\sigma_{\textrm{bot}}$. (Right): Annulus \textcolor{blue}{$\Box((\ell)$} approximated by $\Box^{\delta}(\ell) \subset \cS^{\delta}$,  whose respective boundaries lie at distance at most $ 10\delta $ from each other. The event of Theorem \ref{thm:RSW} corresponds to the existence of a circuit of open edges (drawn in red) that surrounds the inner free boundary of $\Box^{\delta}(\ell)$.}
	\label{fig:Intro}
\end{figure}

Once Theorem \ref{thm:positive-magnetization} is derived, this second theorem follows directly from the strategy of \cite[Proposition~2.10]{duminil-garban-pete}, recalled in detail in \cite[Section 5.6]{Che20}. In the case of s-embeddings satisfying the assumption \Unif, both theorems hold at every scale. It is even possible to weaken the local assumption \ExpFat\ by requiring the presence of a large surrounding circuit formed of sufficiently fat faces (i.e., with a large enough inscribed circle). Let us note that the constants $L_0$ and $L'_0$ can, in principle, be made explicit in terms of $\kappa$, by carefully following the arguments in \cite[Section 6]{CLR1} and \cite[Section 2]{Che20}. The assumption \ExpFat\ is scale-invariant in the variable $\delta \rho^{-1}$. Thus, one can informally restate the two previous theorems as follows, assuming $\rho=1$: if an s-embedding has a $\kappa<1$-Lipschitz origami map starting at some scale $\delta$, and if all connected components of tangential quadrilaterals with radius smaller than $\delta \exp(-\delta^{-1})$ are of size at most $1$, then the usual RSW theory holds at macroscopic distances. One can see in the proof that the assumption on $\delta \exp(-\delta^{-1})$-fat quads can also be weakened to $\delta \exp(-\gamma \delta^{-1})$-fat quads for any fixed $\gamma>0$.

To apply the aforementioned strategy, one must first ensure the possibility of constructing a \emph{proper} s-embedding associated to the abstract weighted graph $(G,x)$, with faces of $\Lambda(G)$ delimited by straight segments and without overlaps. The existence of proper s-embeddings associated to any weighted finite graph $G$ \textbf{always holds}, without any particular requirement on the combinatorial structure of the outer face (see Section \ref{sub:explicit-construction}), and such embeddings can even be constructed using an \emph{explicit algorithm} from \cite{KLRR}. For infinite grids, it remains an open question whether one can find a proper full-plane embedding in the generic case; this often requires a careful solution to the propagation equation \eqref{eq:3-terms} (see the brief discussion at the end of the construction of s-embeddings using a pair of holomorphic functions in Section \ref{sub:explicit-construction}, which presents a natural condition on the behavior at infinity of solutions to the propagation equation that lead to proper embeddings, following \cite[Appendix]{CLR2}). We highlight that, among the consequences of the current paper (see Section \ref{sub:explicit-construction} for detailed discussion), one obtains:
\begin{itemize}
\item An alternative derivation of the RSW property for the FK-Ising model on critical Z-invariant isoradial lattices, as in \cite{ChSmi2}. Our result extends beyond the scope of Chelkak and Smirnov, as it allows the bounded angle property to be replaced by the assumption \ExpFat. This replacement, in particular, allows a more concise derivation of the RSW property for the FK representation of the quantum Ising model \cite[Section 5]{DCLM-universality}.

\item An alternative derivation of the RSW property for the \emph{massive} Z-invariant model on isoradial grids, as in \cite{park-iso}, using the re-embedding procedure of \cite[Section 3.3]{Cim-universality}.

\item A new derivation of the RSW property for the Layered Ising model presented in Section \ref{sub:explicit-construction}, with i.i.d. weakly random coupling constants that are typically \emph{very large} compared to the classical characteristic length. In particular, randomness in successive layers averages sufficiently to preserve the criticality of the model. The same strategy provides a concise proof of the RSW property for the \emph{massive} square lattice.

\item An alternative derivation of the RSW property for doubly-periodic graphs, as given in \cite{Che20}. In particular, there is no need to use the doubly-periodic canonical embedding to deduce that Theorems \ref{thm:positive-magnetization} and \ref{thm:RSW} hold. More generally, our results apply to `flat' origami functions, including circle patterns of Lis with bounded angles \cite{lis-kites}, already treated in \cite{Che20}.

\item A new derivation of the RSW property for the FK-Ising model on circle patterns introduced in \cite{lis-kites}, replacing the bounded angle property by the \ExpFat\ assumption (the limiting origami map in this setup is automatically $0$). In particular, our result applies to finite pieces of random triangulations arising from the discrete mating-of-trees model of Duplantier, Gwynne, Miller, and Sheffield \cite{duplantier2021liouville,gwynne2021tutte}, with Ising weights naturally attached to the associated circle packing. In \cite{GGJN}, it was proven that this random map model has no large circles in bounded regions with high probability; combined with refined estimates on the typical number of vertices, this fits our framework.

\item A new derivation of the RSW property for tilings of the plane by tangential quadrilaterals (e.g., the IC nets constructed by Akopyan and Bobenko \cite{akopyan2018incircular}) with an origami map whose Lipschitz constant is asymptotically smaller than $1$. In Section \ref{sub:explicit-construction}, we present a method to use any pair $(f,g)$ of holomorphic functions on the unit disc satisfying $\Im[\overline{f}g]>0$ to construct a new critical lattice. This method, which can be generalized, provides a simple way to construct many new critical lattices from existing ones via a discrete Weierstrass parametrization of the space-like surface $(\int  \Im[  2fg], \int \Im[  f^2-g^2], \int \Im[  f^2+g^2]) \subseteq \mathbb{R}^{2,1} $, following \cite[Section 3.3]{Cim-universality}
\end{itemize}
To prove Theorem \ref{thm:positive-magnetization}, we use the flexibility of the s-embeddings setup to extend the discretization of a well-chosen rectangle in $\cS^{\delta}$ and paste it to a piece of the square lattice (see Figure \ref{fig:extension_shape} in Section \ref{sec:geometry}). Once this extension is performed, the core of the proof proceeds by contradiction: we assume that Theorem \ref{thm:positive-magnetization} fails and then identify an inconsistency in the boundary behavior of discrete observables and their continuous counterparts. Adding these pieces of the square lattice greatly simplifies the boundary analysis in the contradiction argument and provides a much more digestible proof of the inconsistency.

Let us finally mention that the criticality of a given proper tiling of tangential quads can also be interpreted from the perspective of the dependency on the inverse temperature. Namely, as explained by Lis at the end of \cite[Section 6]{lis-kites}, in the case of s-embeddings satisfying the assumption \Unif, one can assign weights $x(e) = \tan \frac{\theta_e}{2}$, where the abstract angle $\theta_e$ is recovered using the geometrical formula \eqref{eq:theta-from-S}. Consider the model at inverse temperature $\beta$ in the usual formulation \eqref{eq:intro-Zcirc}, where the coupling constants are given by $\tanh(J_e) = x_e$. Then the primal model (with spins on $G^\circ$) exhibits positive magnetization for $\beta>1$, while the dual model (with spins on $G^\bullet$) exhibits positive magnetization for $\beta<1$. The result of \cite{Che20}, which we strengthen in the present article, ensures that the critical point is indeed $\beta=1$, since there is no simultaneous positive magnetization for the primal and dual critical models, as a consequence of RSW-type estimates.

\subsection{Novelties of the paper, related works and open questions}\label{sub:novelties}

This article provides a general framework that we hope will open a path to generalize already known results to new graph settings, and already allows the construction of a substantial number of new `critical' Ising models, for instance using tilings derived from a pair of discretized holomorphic functions. In all previously known setups, symmetries, integrability, a bounded number of neighbors, and the finite energy property play a key role in the analysis; here, we bypass these requirements. The idea of pasting a piece of an already understood grid to obtain a concise and more transparent contradiction (compared to the arguments developed in \cite{ChSmi2} and \cite{Che20}) is new and relies on the fact that boundary-to-boundary connection probabilities of the FK model can be studied in larger domains, provided that the (abstract) layers where we `weld' the two graphs do not completely break connectivity. From our perspective, this allows us to formulate a notion of criticality for the Ising model on a generic planar graph, following \cite[$\mathbf{(II)}$ in Section 1.5]{Che20}:
\begin{center}
Which are the spectral properties of the propagation operator \eqref{eq:3-terms} (or the associated Kac-Ward matrix) that imply the existence of a complex valued solution $\mathcal{X}$ to the propagation equation such that the associated s-embedding $\cS_{\cX}$ satisfies \LipKd\, for a large enough $\delta$?
\end{center}
This question has been investigated in a periodic setup in a joint work with Chelkak and Hongler (see \cite[Section 5]{CHM-zig-zag-Ising}) and there is related to spectral characteristics of the operator near the bottom of its spectrum. In that article, one even relates features of the canonical embedding in the Euclidean plane to the asymptotics of the integrated density of states near $\lambda = 0$ (see \cite[Equation (5.8)]{CHM-zig-zag-Ising}). Generalizing this understanding would allow a precise reformulation of the present notion of criticality from the perspective of the Kac-Ward operator.

We expect that off-critical models lead to embeddings with an optimal Lipschitz constant equal to one, as discussed in Section \ref{sub:optimality} regarding near-critical and off-critical models. One may also attempt to prove the convergence of crossing probabilities by expressing them in the four-point setup via quasi-conformal uniformization, in a fashion similar to the critical or near-critical case \cite[Section 5]{park-iso}. This approach is currently being investigated in an article in preparation with Park \cite{MahPar23a}. More generally, we view the current paper as a first step towards proving the existence of a scaling limit for the Ising model on general s-embeddings. The associated continuous limit and its relation to Lorentz geometry and quasi-conformal maps (see \cite{CLR2} and \cite[Section 2.7]{Che20}) will be developed jointly with Park \cite{MahPar23a,MahPar23b}, following the route started in the author’s thesis \cite[Chapter 6]{MahPHD}. In particular, our result ensures, in the classical sense, the precompactness of FK interfaces on general s-embeddings satisfying \LipKd\ and \ExpFat\ (see, e.g., \cite{KemSmi2}).

An interesting line of research would be to extend the strategy of \cite{DMT21} to prove crossing estimates that are uniform with respect to the local structure of the lattice and boundary conditions, by bootstrapping Theorem \ref{thm:RSW} for \Unif-type grids. We are currently unable to handle this problem, since such a strategy requires a comparison between primal and dual arm exponents in the half-plane, which is not straightforward without additional symmetries. From our perspective, the most interesting question is to use the meta-framework developed here to study crossing probabilities on random planar maps (with appropriately chosen coupling constants) or on $\mathbb{Z}^2$ with random coupling constants.

\noindent {\bf Acknowledgements.} The author is deeply indebted to Dmitry Chelkak for introducing him to this field and constant support. We are also grateful to Sung-Chul Park for valuable discussions and carefully reading earlier versions of this manuscript as well as Mikhail Basok for crucial help regarding Beltrami equations. 
An earlier version of this text was part of the thesis of the author, completed when he was a student at Ecole Normale Supérieure. R.M. is grateful for the support of the institution. The author is also grateful to Niklas Affoter, Nikolai Bobenko, Cédric Boutillier, Béatrice de Tilière, Hugo Duminil-Copin,  Trishen Gunaratnam, Konstantin Izyurov, Emmanuel Kammerer, Dmitrii Krachun, Benoit Laslier, Marcin Lis, Ioan Manolescu, Seginus Mowlavi, Mendes Oulamara, Christophoros Panagiotis, Romain Panis, Sanjay Ramassamy, Stanislav Smirnov and Yijun Wan for useful discussions and helpful remarks. Finally, we would like to thank the anonymous referees for comments that improved the readability of this manuscript. This research is funded by Swiss National Science Foundation and the NCCR SwissMAP.

\section{Definitions and crash introduction to the s-embeddings formalism}\label{sec:definitions}
\setcounter{equation}{0}

We recall in this section the construction of s-embeddings introduced in \cite[Section 3]{Che20} and the regularity theory of s-holomorphic functions, both based upon a complexification procedure of the Kadanoff-Ceva formalism. The notations we use in this paper follow exactly those of \cite{Che20} and agree with those of \cite[Section~3]{CCK} and \cite{Ch-ICM18}. We proceed below without giving any proof, referring to  \cite[Section 2]{Che20} for more details. The overall idea of the construction is to start with an abstract weighted planar graph and construct an embedding where discrete complex analysis techniques are available.

\begin{figure}
\begin{minipage}{0.325\textwidth}
\includegraphics[clip, width=1.2\textwidth]{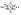}
\end{minipage}\hskip 0.10\textwidth \begin{minipage}{0.33\textwidth}
\includegraphics[clip, width=1.2\textwidth]{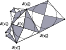}
\end{minipage}
\caption{(Left) Local notations of a graph given an arbitrary embedding of a planar graph for a quad $z \in \diamondsuit (G)$. Vertices of the primal graph $G^\bullet $ are drawn as black dots, vertices of the dual graph $G^\circ $ (which correspond to faces of $G^\bullet $) are drawn as white dots, and corners (which correspond to edges of the bipartite graph $G^\bullet \cup G^\circ $) are drawn as triangles. We present here a piece of the \emph{double cover} of the corner graph that branches around $z$. Neighboring corners in the double cover are linked with dashes. (Right) A small piece of the associated face in an s-embedding, together with neighboring faces. Each face in this proper s-embedding is a tangential quadrilateral, not necessarily convex. The bipartite spliting of each face of $\Lambda(G)$ in four triangles corresponds to the dimer identification under the s/t-embeddings correspondence (see \cite[Section 2.3 ]{Che20} for more details.)}
\label{fig:graph-notations}
\end{figure}

\subsection{Notation and Kadanoff--Ceva formalism}\label{sub:notation}

We fix $G$ a planar graph (allowing multi-edges and vertices of degree two  but forbidding loops and vertices of degree one) with the combinatorics of the plane or of the sphere, considered up to homeomorphisms preserving cyclic ordering of edges around each vertex. In the sphere case, we prescribe one of the faces of $G$ and call it the outer face of $G$. We denote $G=G^\bullet$ the original graph whose vertices are denoted by $v^\bullet\in G^\bullet$ and $G^\circ$ its dual, whose vertices are denoted by $v^\circ\in G^\circ$. The faces of the bipartite graph $\Lambda(G):=G^\circ\cup G^\bullet$ (with natural incidence relation) are in bijection with edges of $G$. We also denote $\Dm(G)$ the graph dual to $\Lambda(G)$, whose vertices are often denoted by \mbox{$z\in\Dm(G)$} and called quads. Finally, we denote by $\Upsilon(G)$ the medial graph of $\Lambda(G)$. The vertices of $\Upsilon(G)$ are in bijection with edges $(v^\bullet v^\circ)$ of $\Lambda(G)$. The vertices $c\in\Upsilon(G)$  are called corners of $G$. To make the formalism consistent, one needs to consider several double covers of $\Upsilon(G)$, see e.g. \cite[Fig.~27]{Mercat-CMP} or \cite[Fig 3.A]{Che20} for relevant pictures. Denote by  $\Upsilon^\times(G)$ the double cover that branches over all faces of $\Upsilon(G)$ (each $v^\bullet \in G^\bullet, v^\circ\in G^\circ , z\in \Dm(G)$). When $G$ is finite, this definition of the double cover remains meaningful as $\#(G^\bullet)+\#(G^\circ)+\#(\Dm(G))$ is even due to the Euler theorem. Given $\varpi=\{\vbullet{m},\vcirc{n}\}\subset \Lambda(G)$ where $n,m$ are even, denote by $\Upsilon^\times_\varpi(G)$ the double cover of $\Upsilon(G)$ branching over all its faces except those $\varpi$, and by $\Upsilon_\varpi(G)$ the double cover of $\Upsilon(G)$  branching only over those $\varpi$. We call a \emph{spinor} a function defined on one of the aforementioned double covers whose value at two different lifts of the same corner differ by a multiplicative factor $-1$.

In this paper, we consider the Ising model on faces of $G$, including the outer face in the disc case, i.e. the model assigns $\pm1 $ random variables to vertices of $G^\circ$ with a partition function given by \eqref{eq:intro-Zcirc}. The domain walls representation \cite[Section 1.2]{CCK} (also called low-temperature expansion) assigns a spin configuration $\sigma:G^\circ\to\{\pm 1\}$ to a subset $C$ of edges of $G$ that  separates spins of opposite signs; this expansion is a $2$-to-$1$ mapping from spin configurations onto the set $\cE(G)$ of even subgraphs of $G$, depending on the value of the spin of the outer face.

Given $\vcirc{n}\in G^\circ$ where $n$ is even, fix a subgraph $\gamma^\circ=\gamma_{[\vcirc{n}]}\subset G^\circ$ with odd degree at vertices of $\vcirc{n}$ and even degree at all other vertices of $G^\circ$. One can represent such configuration as a collection of paths on $G^\circ$ linking pairwise vertices of $\vcirc{n}$. Denote
\[
x_{[\vcirc{n}]}(e)\ :=\ (-1)^{e\cdot\gamma_{[\vcirc{n}]}}\,x(e),\quad e\in E(G),
\]
where $e\cdot\gamma=0$ if $e$ doesn't cross $\gamma$ and $e\cdot\gamma=1$ otherwise. One can see that 
\begin{equation}
\label{eq:Esigma}
\textstyle \mathbb E\big[\svcirc{n}\big]\ =\ {x_{[\vcirc{n}]}(\cE(G))}\big/{x(\cE(G))},
\end{equation}
where $x(\cE(G)):=\sum_{C\in\cE(G)}x(C)$, $x(C):=\prod_{e\in C}x(e)$, and similarly for~$x_{[\vcirc{n}]}$.

For $m$ even and $\vbullet{m}\in G^\bullet$, fix again subgraph $\gamma^\bullet=\gamma^{[\vbullet{m}]}\subset G^\bullet$ with even degree at all vertices of $G^\bullet$ except those of $\vbullet{m}$. Following the formalism of Kadanoff and Ceva \cite{kadanoff-ceva}, one changes the signs of the interaction constants $J_e\mapsto -J_e$ on edges $e\in\gamma^\bullet$. This inversion (which is equivalent to replacing $x(e)$ by $x(e)^{-1}$ along the edges of $\gamma^\bullet$) makes the model anti-ferromagnetic near $\gamma^\bullet$, favoring locally configurations with nonaligned spins, and is denoted by the notation $\muvbullet{m}$. More precisely, we introduce the random variable (which depends on the choice of $\gamma^\bullet$)
\[
\textstyle \muvbullet{m}\ :=\ \exp\big[-2\beta\sum_{e\in\gamma^{[\vbullet{m}]}}J_e\sigma_{v^\circ_-(e)}\sigma_{v^\circ_+(e)}\,\big]\,,
\]
the domain walls representation shows that (e.g. \cite[Propositon 1.3]{CCK})
\begin{equation}
\label{eq:Emu}
\textstyle \mathbb E\big[\muvbullet{m}\big]\ =\ x(\cE^{[\vbullet{m}]}(G))\big/{x(\cE(G))},
\end{equation}
where $\cE^{[\vbullet{m}]}$ denotes the set of subgraphs of $G$ with even degrees at all vertices except at those of $\vbullet{m}$ which have odd degrees at the last mentioned. Let us mention that when taking expectations in \eqref{eq:Emu}, the result does not  depend on the choice of the path $\gamma^\bullet$. One can also generalize \eqref{eq:Esigma} and \eqref{eq:Emu} mixing the presence of spins and disorder, which reads as (e.g. \cite[Propositon 3.3]{CCK})
\begin{equation}
\label{eq:Emusigma}
\textstyle \mathbb E\big[\muvbullet{m}\svcirc{n}\big]\ =\ x_{[\vcirc{n}]}(\cE^{[\vbullet{m}]}(G))\big/{x(\cE(G))},
\end{equation}
where $\muvbullet{m}$ are understood as above. The \emph{sign} of this last expression does depend on the parity number of intersections between $\gamma^\circ$ and $\gamma^\bullet$. There is no canonical way to choose of that sign in \eqref{eq:Emusigma} staying on the Cartesian product $(G^\bullet)^{\times m}\!\times (G^\circ)^{\times n}$. However, one can first fix $\cS:\Lambda(G)\to\C$ to be an arbitrarily chosen embedding of $G$, and consider a natural double cover of this Cartesian product, whose branching structure is the one of the spinor $[\,\prod_{p=1}^m\prod_{q=1}^n(\cS(v^\bullet_p)-\cS(v^\circ_q))\,]^{1/2}$. As discussed in great details in \cite[Section 2.2]{CHI-mixed}, the expectations of the form \eqref{eq:Emusigma} can be seen as spinors on the above described double cover of $(G^\bullet)^{\times m}\!\times (G^\circ)^{\times n}$. When treating mixed correlation of the type \eqref{eq:Emusigma}, an extension of the usual Kramers-Wannier duality (again \cite[Propositon 3.3]{CCK}) implies that the roles played by the graphs $G^\bullet$ and $G^\circ$ are now equivalent.

We are now going to consider special correlators of the form \eqref{eq:Emusigma}, in the case where one of the disorders $v^\bullet(c)\in G^\bullet$ and one of the spins $v^\circ(c)\in G^\circ$, are neighbors in $\Lambda(G)$, and separated by a corner $c \in \Upsilon(G)$. In that case, one can \emph{formally} denote the fermion at the corner $c$ by
\begin{equation}
\label{eq:KC-chi-def}
\chi_c:=\mu_{v^\bullet(c)}\sigma_{v^\circ(c)},
\end{equation}
Using equation \eqref{eq:Emusigma}, one can then define the Kadanoff--Ceva \emph{fermionic observables} by
\begin{equation}
\label{eq:KC-fermions}
X_{\varpi}(c):=\E[\,\chi_c \mu_{v_1^\bullet}\ldots\mu_{v_{m-1}^\bullet}\sigma_{v_1^\circ}\ldots\sigma_{v_{n-1}^\circ} ].
\end{equation}
This definition is purely abstract and doesn't require an embedding. Following the above remarks, one can see that $X_\varpi(c)$ is a priori defined up to the sign, but the definition becomes fully legitimate when passing to $\Upsilon^\times_\varpi(G)$. Around a quad $z=(v_0^\bullet,v_0^\circ,v_1^\bullet,v_1^\circ)$ whose vertices are listed in the counterclockwise order (see \cite[Figure 3.A]{Che20} for the notation), that Kadanoff--Ceva fermionic observables satisfy a local linear propagation equation, whose coefficients are determined by the Ising interaction parameters. This propagation equation appeared in the works of \cite{dotsenko1983critical}, \cite{perk1980quadratic} and \cite[Section 4.3]{Mercat-CMP}) and reads as follows:
\begin{equation}
\label{eq:3-terms}
X(c_{pq})=X(c_{p,1-q})\cos\theta_z+X(c_{1-p,q})\sin\theta_z,
\end{equation}
where the corner $c_{pq}$ is identified as $c_{pq}=(v^\bullet_p v^\circ_q)$, the lifts of $c_{pq}$, $c_{p,1-q}$ and of $c_{1-p,q}$ to $\Upsilon^\times_\varpi(G)$ are neighbors, and the angle $\theta_z$ corresponds to the abstract parametrization \eqref{eq:x=tan-theta} of the edge of $G^{\bullet}$ which corresponds to the quad centered at $z$. One can easily show that solutions to \eqref{eq:3-terms} are automatically spinors on $\Upsilon^\times_\varpi(G)$.

\medskip

We conclude this reminder on Kadanoff-Ceva correlators by recalling the definition of the spinor $\eta_c $, that is a special solution (i.e. with some geometrical interpretation) to the propagation equation \eqref{eq:3-terms} on isoradial grids. Given any embedding $\cS:\Lambda(G)\to\C$ of $\Lambda(G)$ into the complex plane, denote following \cite{ChSmi2}
\begin{equation} \label{eq:def-eta}
\eta_c:=\varsigma\cdot \exp\big[-\tfrac{i}{2}\arg(\cS(v^\bullet(c))-\cS(v^\circ(c)))\big],\qquad \varsigma:=e^{i\frac{\pi}{4}},
\end{equation}
where the prefactor $\varsigma=e^{i\frac\pi 4}$ is chosen for convenience reasons. As explained previously, one can once again avoid the sign ambiguity in the definition \eqref{eq:def-eta} by passing to the double cover $\Upsilon^\times(G)$, understanding the products $\eta_c X_\varpi(c):\Upsilon_\varpi(G)\to\C$ as defined on the double cover $\Upsilon_\varpi(G)$ that only branches over $\varpi$. Note that below, we use the notation \eqref{eq:def-eta} even when $\cS$ is not isoradial.

\subsection{Definition of s-embeddings}\label{sub:semb-definition}

We present now in a concise way the explicit embedding procedure introduced in \cite[Section 6]{Ch-ICM18} and then developed in great details in \cite{Che20}. We start by recalling the concrete definition of an s-embedding given in \cite[Definition 2.1]{Che20}, using the Kadanoff-Ceva formalism recalled in Section \ref{sub:notation}. The general idea is to use a solution to \eqref{eq:3-terms} to construct the picture. 

\begin{definition}\label{def:cS-def}
Let $(G,x)$ be a weighted planar graph with the combinatorics of the plane and $\cX:\Upsilon^\times(G)\to\C$ a solution to the propagation equation \eqref{eq:3-terms}. We say that $\cS=\cS_\cX:\! \Lambda(G)\to\C$ is an s-embedding of $(G,x)$ associated to $\cX$ if for each $c\in\Upsilon^\times(G)$, we have
\begin{equation}
\label{eq:cS-def}
\cS(v^\bullet(c))-\cS(v^\circ(c))=(\cX(c))^2.
\end{equation}
For $z\in\Dm(G)$, denote by $\cS^\dm(z)\subset\C$ the quadrilateral whose vertices are points $\cS(v_0^\bullet(z))$, $\cS(v_0^\circ(z))$, $\cS(v_1^\bullet(z))$, $\cS(v_1^\circ(z))$. The s-embedding $\cS$ is said to be \emph{proper} if the quadrilaterals $\cS^\dm(z) =(\cS(v_0^\bullet(z)) \cS(v_0^\circ(z)) \cS(v_1^\bullet(z)) \cS(v_1^\circ(z)))$ do not overlap with each other, and is said to be non-degenerate if no quads $\cS^\dm(z)$ degenerates to a segment. In particular, the convexity of $\cS^\dm(z)$ is not required.
\end{definition}
Let us make clear that it is not at all automatic that, given any solution $\cX $ to the propagation equation, the associated picture $\mathcal{S}_{\cX}$ is proper, and finding a solution to \eqref{eq:3-terms} that leads to a non-degenerate proper picture is a non-trivial step. One can also extend the definition of $\cS$ to the set $\Dm(G)$ by setting \cite[Equation (2.5)]{Che20}
\begin{equation}\label{eq:cS(z)-def}
\begin{array}{l}
\cS(v_p^\bullet(z))-\cS(z):=\cX(c_{p0})\cX(c_{p1})\cos\theta_z,\\[2pt]
\cS(v_q^\circ(z))-\cS(z):=-\cX(c_{0q})\cX(c_{1q})\sin\theta_z,
\end{array}
\end{equation}
where $c_{p0}$ and $c_{p1}$ (respectively, $c_{0q}$ and $c_{1q}$) are neighbors on $\Upsilon^\times(G)$.
The propagation equation \eqref{eq:3-terms} implies directly the consistency of both definitions \eqref{eq:cS-def} and \eqref{eq:cS(z)-def}.

\bigskip
The second object of crucial relevance in the s-embeddings framework is the so-called origami map. It is the large scale properties of the origami map that will indicate whether one can interpret the abstract graph to a (near)-critical system or not. We recall now the definition \cite[Definition 2.2]{Che20} which can also be found (with appropriate Ising/dimers identifications) in \cite{KLRR,CLR1}.

\begin{definition}\label{def:cQ-def}
Given~$\cS=\cS_\cX$, one can construct the \emph{origami} function denoted by \mbox{$\cQ=\cQ_\cX:\Lambda(G)\to\R$}, as a real valued function (defined up to a global additive constant) by declaring its increments between $v^{\bullet}(c) $ and $v^\circ (c) $ to be
\begin{equation}
\label{eq:cQ-def}
\cQ(v^\bullet(c))-\cQ(v^\circ(c))\ :=\ |\cX(c)|^2\,=\,|\cS(v^\bullet(c))-\cS(v^\circ(c))|\,.
\end{equation}
\end{definition}

Once again, the propagation equation \eqref{eq:3-terms} implies directly the consistency of the definition \eqref{def:cQ-def}. In words, this implies that alternates sum of edge-length around a quad vanishes. This ensures the image $\cS^\dm(z)$ of a quad into the complex plane is a quadrilateral tangential to a circle centered at the point $\cS(z)$ given by \eqref{eq:cS(z)-def}. The point $\cS(z)$ is the intersection point of the four bisectors of the angles of the tangential quadrilateral $\cS^\dm(z)$. We denote by $r_z$ the radius of the circle, which can be recovered from the values of $\chi $, using e.g. \cite[Equation (2.7)]{Che20}. If one denotes $\phi_{vz}$ the half-angle of the quad $\cS^\dm(z)$ at $\cS(v)$, the Ising weight $\theta_z$ (in the parametretrization \eqref{eq:x=tan-theta}) can be recovered from the angles in $\cS $ plane using the formula \cite[Equation (2.8)]{Che20}
\begin{equation}
\label{eq:theta-from-S}
\tan\theta_z\ =\ \biggl(\frac{\sin\phi_{v_0^\bullet z}\sin\phi_{v_1^\bullet z}}{\sin\phi_{v_0^\circ z}\sin\phi_{v_1^\circ z}}\biggr)^{\!1/2}.
\end{equation}

As explained in \cite[Section 2.3]{Che20} (see also \cite[Section 7]{KLRR}), one can see that if $\cS$ is proper and non-degenerate, the map $\cS:\Lambda(G)\cup\Dm(G)\to\C$ is a \emph{t-embedding} $\cT$ and $\eta_{c^\bullet}=\eta_{c^\circ}:= \overline{\varsigma}\eta_c$ is an origami square root (see \cite[Definition~2.4]{CLR1}) of $\cT$. This identification allows (e.g.  \cite[Appendix]{CLR1}) to extend the origami map $\cQ$ into a piece-wise linear way to the \emph{entire} plane (taking complex values inside the faces of the t-embedding), and not only on edges of $\Lambda(G)$.

\subsection{S-holomorphic functions and associated functions ${H_F}$ and $I_\mathbb{C} $}\label{sub:HF-def}
We recall now the central notion of \emph{s-holomorphic functions} (generalized to s-embeddings in \cite{Che20}), introduced first for the critical square grid by Smirnov \cite[Definition 3.1]{Smirnov_Ising} and generalized to the isoradial context by Chelkak and Smirnov in \cite[Definition~3.1]{ChSmi2} (it is in the latter paper that this name was coined). That notion is at the heart of the use of discrete complex analysis techniques to study the Ising model. Under the link (briefly mentioned above) between s and t-embeddings, the s-holomorphic functions are special cases of \emph{t-holomorphic} functions introduced in the dimers context in \cite[Definition 3.2]{CLR1}, which allows to use the regularity theory proved in that later paper. Below, we denote by $\textrm{Pr}[\beta,\eta\mathbb{R}]:=1/2\big(\beta +\overline{\beta} \eta^2 \big)$ the usual projection of the complex number $\beta$ on the line $\eta\mathbb{R}$. We recall now the general definition on s-holomorphic functions, given in \cite[Definition 2.4]{Che20}.

\begin{definition}\label{def:s-hol}
A function $F$ defined on a subset of $\Dm(G)$ is called s-holomorphic if
\begin{equation}
\label{eq:s-hol}
\textrm{Pr}[F(z),\eta_c\R] = \textrm{Pr}[F(z'),\eta_c\R]
\end{equation}
for each pair of quads $z,z'\in\Dm(G)$ adjacent to the same edge $(v^\circ(c)v^\bullet(c))$ in $\cS$.
\end{definition}

The next proposition generalizes beyond the isoradial setup (e.g. \cite[Lemma~3.4]{ChSmi2}) the (bijective) link between real valued solutions to the propagation equation \eqref{eq:3-terms} and s-holomorphic functions. This link was already presented in \cite[Proposition 2.5]{Che20} and in \cite[Appendix]{CLR1}.

\begin{proposition}\label{prop:shol=3term} Let $\cS=\cS_\cX$ be a proper s-embedding and $F$ an s-holomorphic on a subset of $\Dm(G)$. Then, the spinor $X$ defined at corners \mbox{$c\in\Upsilon^\times(G)$} belonging to the face $z\in\Dm(G)$ by 
\begin{align}
X(c)\ &:=\ |\cS(v^\bullet(c))-\cS(v^\circ(c))|^{\frac{1}{2}}\cdot\Re[\overline{\eta}_c \cdot F(z)] \notag\\
 &=\ \Re[\overline{\varsigma}\cX(c)\cdot F(z)]\ =\ \overline{\varsigma}\cX(c)\cdot\mathrm{Pr}[F(z),\eta_c\R]
\label{eq:X-from-F}
\end{align}
satisfies the propagation equation \eqref{eq:3-terms} around $z$. Conversely for $X:\Upsilon^\times(G)\to\R$ a unique real valued solution to \eqref{eq:3-terms}, there exists a s-holomorphic function~$F$ such that~\eqref{eq:X-from-F} is fulfilled.
\end{proposition}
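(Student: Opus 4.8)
The plan is to establish Proposition~\ref{prop:shol=3term} as an essentially local computation around a single quad $z$, checking first the forward direction (from an s-holomorphic $F$ one produces a spinor $X$ solving \eqref{eq:3-terms}) and then the converse (recovering $F$ from a real solution $X$). The starting point is to fix a quad $z=(v_0^\bullet,v_0^\circ,v_1^\bullet,v_1^\circ)$ and work with the four corners $c_{pq}$ around it, together with the local data: the values $\cX(c_{pq})$, the angle $\theta_z$ from \eqref{eq:x=tan-theta}, and the spinors $\eta_{c_{pq}}=\varsigma\exp[-\tfrac{i}{2}\arg(\cS(v_p^\bullet)-\cS(v_q^\circ))]$, which by \eqref{eq:cS-def} satisfy $\overline{\varsigma}\cX(c_{pq}) = |\cX(c_{pq})|\,\overline{\eta}_{c_{pq}}$. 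I would first verify the chain of equalities in \eqref{eq:X-from-F}: the first equality is \eqref{eq:cS-def} plus the definition \eqref{eq:def-eta}, since $|\cS(v^\bullet(c))-\cS(v^\circ(c))|^{1/2} = |\cX(c)|$ and $\overline{\eta}_c \cX(c)^2 = |\cX(c)|^2 \overline{\varsigma}\,\overline{\eta}_c \cdot (\text{phase})$; the second uses that $\Re[\overline{\eta}_c F]\cdot|\cX(c)| = \Re[\overline{\varsigma}\cX(c) F]$ because $\overline{\varsigma}\cX(c)$ is a positive multiple of $\overline{\eta}_c$; the last is just the definition of the projection $\Pr{\cdot}{\eta_c\R}$.

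Next, for the forward direction, I would plug the formula $X(c_{pq}) = \Re[\overline{\varsigma}\cX(c_{pq})F(z)]$ into the three-term relation \eqref{eq:3-terms} and show it reduces, after using the \emph{same} relation for $\cX$ itself (which holds since $\cX$ solves the propagation equation by hypothesis that $\cS=\cS_\cX$), to an identity. Concretely, $X(c_{p,1-q})\cos\theta_z + X(c_{1-p,q})\sin\theta_z = \Re[\overline{\varsigma}(\cX(c_{p,1-q})\cos\theta_z + \cX(c_{1-p,q})\sin\theta_z)F(z)] = \Re[\overline{\varsigma}\cX(c_{pq})F(z)] = X(c_{pq})$, where the middle step is exactly \eqref{eq:3-terms} for $\cX$. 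Here I must be slightly careful about which lift of $F(z)$ and which lifts of the corners enter — but the point of working on $\Upsilon^\times_\varpi(G)$ is precisely that the sign bookkeeping is consistent, and since $F$ is scalar-valued on $z$ while the spinorial structure is carried entirely by $\cX$, the computation goes through verbatim. The spinor property of $X$ is then inherited from that of $\cX$.

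For the converse, given a real solution $X:\Upsilon^\times(G)\to\R$ to \eqref{eq:3-terms}, I would define, for each quad $z$, the complex number $F(z)$ by solving the (real-)linear system: on the two corners $c_{00}, c_{01}$ adjacent to $v_0^\bullet$ (say), the two equations $\Re[\overline{\varsigma}\cX(c_{0q})F(z)] = X(c_{0q})$, $q=0,1$, determine $F(z)$ uniquely provided $\cX(c_{00})$ and $\cX(c_{01})$ are not real multiples of each other — i.e.\ provided the quad is non-degenerate, which is part of the hypothesis that $\cS$ is proper. One then checks, using the three-term relation for both $X$ and $\cX$, that the remaining two identities $\Re[\overline{\varsigma}\cX(c_{pq})F(z)] = X(c_{pq})$ hold automatically for the other two corners; this is the same algebra as in the forward direction, run in reverse. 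Finally, I would verify that the function $z\mapsto F(z)$ so defined is genuinely s-holomorphic in the sense of \eqref{eq:s-hol}: for adjacent quads $z,z'$ sharing an edge $(v^\circ(c)v^\bullet(c))$, both $\Pr{F(z)}{\eta_c\R}$ and $\Pr{F(z')}{\eta_c\R}$ equal $\overline{\varsigma}\cX(c)^{-1}X(c)$ by construction (using the third expression in \eqref{eq:X-from-F} read as a formula for the projection, legitimate since $\overline{\varsigma}\cX(c)$ is a nonzero multiple of $\overline{\eta}_c$ and $X(c)$ is the common value attached to the shared corner), so they agree. The main obstacle I anticipate is not any single computation but the careful tracking of the double-cover signs and the choice of lifts — making sure that the scalar $F(z)$ is well-defined on $\Dm(G)$ (not just on $\Dm^\times$) and that the spinor identities are stated on the correct cover $\Upsilon^\times(G)$ — together with pinning down exactly where non-degeneracy of the quad is used to guarantee solvability and uniqueness of the local linear system.
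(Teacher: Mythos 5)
Your proposal is correct and is essentially the standard argument behind this statement (which the paper only recalls from \cite{Che20} rather than reproving): the forward direction is exactly the linearity computation you describe, and your $2\times2$ real-linear system is precisely what the paper's reconstruction formula \eqref{eq:F-from-X} solves in closed form, there with the corner pair $c_{01},c_{10}$ and denominator $\Im[\overline{\cX(c_{01})}\,\cX(c_{10})]\neq 0$ playing the role of your non-collinearity condition, while the remaining two corner identities follow as you say from \eqref{eq:3-terms} for both $X$ and $\cX$ (using $\sin\theta_z\neq 0$ since $\theta_z\in(0,\tfrac{\pi}{2})$). The only cosmetic point is that in Definition \ref{def:cS-def} non-degeneracy is stated separately from properness, so the solvability of your local system should be attributed to the embedding being proper \emph{and} non-degenerate (the paper's informal usage conflates the two), and the common projection value you write should be read as $(\overline{\varsigma}\cX(c))^{-1}X(c)$.
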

When $F$ and $X$ are linked by \eqref{eq:X-from-F}, one can reconstruct the value of $F$ at $z\in \diamondsuit $ from the values of $X$ at any pair of corners $c_{pq}(z)\in\Upsilon^\times(G)$, e.g. \cite[Corollary 2.6]{Che20}
\begin{equation} \label{eq:F-from-X}
F(z)\ =\ -i\varsigma\cdot\frac{\overline{\cX(c_{01}(z))}\,X(c_{10}(z))-\overline{\cX(c_{10}(z))}\,X(c_{01}(z))} {\Im[\,\overline{\cX(c_{01}(z))}\,\cX(c_{10}(z))}.
\end{equation}

\medskip
To study the regularity of s-holomorphic functions as well as the local behavior of their scaling limit, one uses their 'integral' while one uses generalization of the 'integral' of the imaginary part of their square introduced by Smirnov in \cite{Smirnov_Ising} to derive their boundary behavior. The former is heavily studied in \cite[Proposition 6.15]{CLR1} and will be useful deriving local regularity theory for discrete functions as well as the local equation satisfied by subsequential limits in continuum, while the latter was introduced by Smirnov in \cite{Smirnov_Ising} on the critical square grid and has since been exploited in several different contexts to identify the scaling limit of fermionic observables. We start with the integral  $I_{\mathbb{C}}$ of an s-holomorphic function. In \cite[Section 2.5 of]{Che20}, s-holomorphic functions are described as gradients of harmonic functions on the associated S-graphs, specializing in the Ising context the technology developed in \cite[Section~4.2]{CLR1} for t-holomorphic functions. Given an s-holomorphic function $F$ on $\diamondsuit(G)$, one can define in the continuum  plane (up to a global additive constant) \cite[Section 2.3]{Che20}
\begin{equation}\label{eq:def-I_C}
I_{\mathbb{C}}[F]:= \int \big( \overline{\varsigma}Fd\cS + \varsigma \overline{F} d\cQ \big)
\end{equation}
Let $v_{1,2}^{\bullet}, v_{1,2}^{\circ}$ be vertices of the quad $z\in \diamondsuit(G)$. Then one has for $\star \in \{ \bullet, \circ \} $
\begin{equation}
I_{\mathbb{C}}[F](v_{2}^{\star}) - I_{\mathbb{C}}[F](v_{1}^{\star}) = \overline{\varsigma} F(z) [ \cS(v_{2}^{\star}) - \cS(v_{1}^{\star})] + \varsigma \overline{F(z)}[ \cQ(v_{2}^{\star}) - \cQ(v_{1}^{\star})].
\end{equation}

As for the imaginary part of the primitive of the square $H$, we start by recalling first its combinatorial definition (i.e. which doesn't require any particular embedding into the plane) for spinors on $\Upsilon^\times(G)$ satisfying \eqref{eq:3-terms}. Afterwards, we precise its analytic interpretation in the context of s-embeddings. This definition, which represents a generalisation of the original work of Smirnov, can be found in the following form in \cite[Definition 2.8]{Che20}.
\begin{definition}
\label{def:HX-def} Given $X$ a spinor on $\Upsilon^\times(G)$ satisfying \eqref{eq:3-terms}, one defines the function $H_X$ up to a global additive constant on $\Lambda(G)\cup\Dm(G)$ by setting
\begin{equation}
\label{eq:HX-def}
\begin{array}{rcll}
H_X(v^\bullet_p(z))-H_X(z)&:=&X(c_{p0}(z))X(c_{p1}(z))\cos\theta_z, & p=0,1,\\[2pt]
H_X(v^\circ_q(z))-H_X(z)&:=&-X(c_{0q}(z))X(c_{1q}(z))\sin\theta_z,& q=0,1,\\[2pt]
H_X(v^\bullet_p(z))-H_X(v^\circ_q(z))&:=&(X(c_{pq}(z)))^2,
\end{array}
\end{equation}
similarly to~\eqref{eq:cS-def} and~\eqref{eq:cS(z)-def}.
\end{definition}
The consistency of the above definition follows once again from the propagation equation \eqref{eq:3-terms}. Passing to an s-embedding $\cS$ of $(G,x)$, one can use the correspondence between $X$ and $F$ recalled in Proposition \ref{prop:shol=3term} to interpret $H_{X}$ via the s-holomorphic function $F$ associated to $X$. More precisely, it is possible to define \cite[Equation (2.17)]{Che20}
\begin{equation}
\label{eq:HF-def}
H_F:=\int\Re(\overline{\varsigma}^2F^2d\cS+|F|^2d\cQ)=\int (\Im(F^2d\cS)+\Re(|F|^2d\cQ)),
\end{equation}
on $\Lambda(G)\cup\Dm(G)$. The function $H_F$ extends linearly to a piece-wise affine function on each faces of the t-embedding $\cT=\cS$ (but not on each face of $\diamondsuit(G)$ as each face of $\cT$ has its own origami square root $d\cQ$), at least if one stays in the bulk of $G$ (see \cite[Proposition 3.10]{CLR1}). The next lemma links the definitions \eqref{eq:HX-def} and \eqref{eq:HF-def}, proving that they are in fact the \emph{same} function.

\begin{lemma}{\cite[Lemma 2.9]{Che20}} Let $F$ be defined $\Dm(G)$ and $X$ be defined on $\Upsilon^\times(G)$ related by the identity~\eqref{eq:X-from-F}. Then, the functions~$H_F$ and~$H_X$ coincide up to a global additive constant.
\end{lemma}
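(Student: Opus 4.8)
The plan is to compare the two functions increment-by-increment on the edges of $\Lambda(G)\cup\Dm(G)$, since both are defined only up to a global additive constant; it then suffices to show that the increments of $H_F$ (from the integral in \eqref{eq:HF-def}) agree with the combinatorial increments of $H_X$ prescribed in \eqref{eq:HX-def}. First I would fix a quad $z\in\Dm(G)$ and recall that on each face of the t-embedding the form $\Re(\overline{\varsigma}^2F^2\,d\cS+|F|^2\,d\cQ)$ is affine, so the increment of $H_F$ between $\cS(z)$ and $\cS(v^\star_p(z))$ is simply $\Re\big(\overline{\varsigma}^2F(z)^2[\cS(v^\star_p(z))-\cS(z)]+|F(z)|^2[\cQ(v^\star_p(z))-\cQ(z)]\big)$. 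Plugging in the explicit values of $\cS(v^\bullet_p(z))-\cS(z)$ and $\cS(v^\circ_q(z))-\cS(z)$ from \eqref{eq:cS(z)-def} and the analogous values of the $\cQ$-increments (which, by \eqref{eq:cQ-def} and the fact that $\cQ$ adds edge-lengths, are $|\cX(c_{p0})\cX(c_{p1})|\cos\theta_z$ and $-|\cX(c_{0q})\cX(c_{1q})|\sin\theta_z$ with appropriate signs), reduces the identity to a pointwise algebraic fact about $F(z)$ and the two corner values $\cX(c_{pq})$.

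The key algebraic input is the relation \eqref{eq:X-from-F}, namely $X(c)=\Re[\overline{\varsigma}\cX(c)F(z)]$, equivalently $\overline{\varsigma}\cX(c)F(z)=X(c)+i\,Y(c)$ for a suitable conjugate spinor $Y$. I would then expand $\overline{\varsigma}^2F(z)^2\cdot\cX(c_{p0})\cX(c_{p1})=(\overline{\varsigma}\cX(c_{p0})F(z))(\overline{\varsigma}\cX(c_{p1})F(z))=(X(c_{p0})+iY(c_{p0}))(X(c_{p1})+iY(c_{p1}))$, so its real part is $X(c_{p0})X(c_{p1})-Y(c_{p0})Y(c_{p1})$. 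The contribution of the $|F|^2\,d\cQ$ term contributes $|\cX(c_{p0})\cX(c_{p1})|\,|F(z)|^2\cos\theta_z$, and using $|\overline{\varsigma}\cX(c)F(z)|^2=X(c)^2+Y(c)^2$ together with $|\cX(c_{p0})\cX(c_{p1})|\,|F(z)|^2$ being comparable term-by-term, the $Y$-contributions cancel against $-Y(c_{p0})Y(c_{p1})$ up to the cross term, and one is left with exactly $X(c_{p0})X(c_{p1})\cos\theta_z$, which is the prescribed increment $H_X(v^\bullet_p(z))-H_X(z)$. The $v^\circ_q$ case is identical with the sign and the $\sin\theta_z$ factor, and the edge increment $H_F(v^\bullet_p)-H_F(v^\circ_q)$ reduces to $\Re[\overline{\varsigma}^2F(z)^2(\cX(c_{pq}))^2]+|F(z)|^2|\cX(c_{pq})|^2=|\overline{\varsigma}\cX(c_{pq})F(z)|^2\cdot 2\Pr{\cdots}{}$ — more precisely it collapses to $(X(c_{pq}))^2$ once one uses $X(c)=\overline{\varsigma}\cX(c)\cdot\Pr{F(z)}{\eta_c\R}$ and $|\cX(c)|^2=|\cS(v^\bullet(c))-\cS(v^\circ(c))|$, matching the third line of \eqref{eq:HX-def}.

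The main obstacle I anticipate is bookkeeping of signs and square roots: the quantities $|\cX(c_{p0})\cX(c_{p1})|$ appearing in the $\cQ$-increments carry absolute values, whereas $X(c_{p0})X(c_{p1})$ does not, and one must verify that the relative sign works out consistently around the quad — this is exactly where the spinor structure on $\Upsilon^\times(G)$ and the consistency afforded by the propagation equation \eqref{eq:3-terms} are used, and it is precisely the point where the geometric meaning of $\cQ$ (adding lengths from $G^\circ$ to $G^\bullet$) enters. A clean way to handle this uniformly is to note that \eqref{eq:X-from-F} can be rewritten as $\overline{\varsigma}\cX(c)F(z)=X(c)+i\,X^\bullet(c)$ where $X^\bullet$ is the s-holomorphic "dual" spinor, and that $|\cX(c)|^2|F(z)|^2=X(c)^2+X^\bullet(c)^2$; then both $H_F$-increments and $H_X$-increments become manifestly the real/imaginary decompositions of the same product of two complex numbers, and the lemma follows. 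Since the statement is quoted verbatim from \cite[Lemma 2.9]{Che20}, I would in practice simply cite that computation, indicating that it is a direct consequence of \eqref{eq:X-from-F}, \eqref{eq:cS(z)-def}, \eqref{eq:cQ-def} and \eqref{eq:HX-def}.
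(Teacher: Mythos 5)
Your overall strategy --- matching the increments of $H_F$ and $H_X$ quad by quad and reducing everything to pointwise algebra for $w=\overline{\varsigma}\cX(c)F(z)$ via \eqref{eq:X-from-F} --- is the right one, and it is essentially the computation behind \cite[Lemma 2.9]{Che20}, which the present paper does not reprove but simply cites. Your treatment of the $\Lambda(G)$-edge increments is fine: $\Im[F^2\cX(c)^2]+|F|^2|\cX(c)|^2=\Re[w^2]+|w|^2=2(\Re[w])^2=2\,X(c)^2$ (the overall factor of $2$ relative to the third line of \eqref{eq:HX-def} is a normalization matter in the transcription of \eqref{eq:HF-def} and affects all increments uniformly, so it is harmless).

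The genuine gap is in the increments between a center $z\in\Dm(G)$ and the quad vertices. You take $\cQ(v_p^\bullet(z))-\cQ(z)=|\cX(c_{p0})\cX(c_{p1})|\cos\theta_z$ (and the analogous $\sin$-formula) ``by \eqref{eq:cQ-def}'', but \eqref{eq:cQ-def} defines $\cQ$ only on $\Lambda(G)$; its extension to the centers is \emph{complex-valued} in general (as remarked in Section \ref{sub:semb-definition}), and the quantity entering \eqref{eq:HF-def} is its real part, which equals $\Re[\cX(c_{p0})\overline{\cX(c_{p1})}]\cos\theta_z$ rather than $|\cX(c_{p0})\cX(c_{p1})|\cos\theta_z$; the two agree only when $\cX(c_{p0})$ and $\cX(c_{p1})$ have the same argument. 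Your formula is not even consistent with \eqref{eq:cQ-def} around the quad: on a critical isoradial rhombus with half-angle $\theta_z$ at the black vertices it would give $\cQ(v^\bullet)-\cQ(v^\circ)=\delta(\cos\theta_z+\sin\theta_z)\neq\delta=|\cX(c)|^2$ --- the point being that $|\cX|$ does \emph{not} solve the propagation equation \eqref{eq:3-terms}, so ``modulus'' increments do not telescope. Consequently the identity your cancellation needs, namely $|a||b|=\Re[a\overline{b}]$ with $a=\overline{\varsigma}\cX(c_{p0})F(z)$, $b=\overline{\varsigma}\cX(c_{p1})F(z)$, is false in general, and the proposed mechanism (``the $Y$-contributions cancel against $-Y(c_{p0})Y(c_{p1})$'') cannot work as stated: the $|F|^2d\cQ$ term depends on $F$ only through $|F|^2$, whereas $Y(c_{p0})Y(c_{p1})$ depends on the phase of $F$. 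The correct computation is $\Re[ab]+\Re[a\overline{b}]=2\Re[a]\Re[b]$, i.e.\ $\Im[F^2(\cS(v_p^\bullet)-\cS(z))]+|F|^2\Re[\cX(c_{p0})\overline{\cX(c_{p1})}]\cos\theta_z=2X(c_{p0})X(c_{p1})\cos\theta_z$; in the rhombus example the correct real origami increment is $\delta\cos^2\theta_z$, while your value $\delta\cos\theta_z$ leaves an uncancelled error $\delta|F|^2\cos\theta_z(1-\cos\theta_z)$. So the missing ingredient is the correct (complex) extension of $\cQ$ to $\Dm(G)$ and the identification of its real part, which is precisely where the conjugate factor $\overline{\cX(c_{p1})}$ enters.
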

If $\cS $ is an isoradial grid, the origami map $\cQ$ is constant on both $G^\bullet$ and $G^\circ$ (as all the edges of any quad $\cS^{\diamond}(z) $ are all of the same length), thus $H_F$ is the primitive of $\Im[F^2d\cS]$, recovering the  original definition given in \cite[Section~3.3]{ChSmi2}. We now recall the comparison principle for functions $H_F=H_X$ associated with s-holomorphic functions. This statement is due to Park and can be found in \cite[Proposition 2.11]{Che20}. In particular, when one of the observables in the following proposition is identically $0$, this proposition becomes a maximum principle. 

\begin{proposition} \label{prop:HF-comparison} Let spinors $X,Y:\Upsilon^\times(G)\to\R$ both satisfy the propagation equation \eqref{eq:3-terms} and the associated functions $H_X,H_Y:\Lambda(G)\cup\Dm(G)\to\R$ be defined via \eqref{eq:HX-def}. Then, the difference $H_X-H_Y$ cannot have an extremum at an interior vertex of its domain of definition.
\end{proposition}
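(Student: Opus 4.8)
The statement is a discrete maximum principle for $H:=H_X-H_Y$ on $\Lambda(G)\cup\Dm(G)$. The plan is to show, by a local analysis at each of the three vertex types (quad centre, black, white), that a local extremum of $H$ at an interior vertex forces $H$ to be constant on that vertex together with its neighbours, and then to conclude by the usual propagation of constancy across a connected domain.

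I would start with the decisive case, a quad centre $z=(v_0^\bullet v_0^\circ v_1^\bullet v_1^\circ)$. Substituting the propagation equation \eqref{eq:3-terms} for both $X$ and $Y$ into the increment formulas \eqref{eq:HX-def}, and using the identity $X(c_{00}(z))^2+X(c_{11}(z))^2=X(c_{01}(z))^2+X(c_{10}(z))^2$ (itself a consequence of \eqref{eq:3-terms}), a short computation gives
\[
\sum_{p=0,1}\bigl[H(v_p^\bullet)-H(z)\bigr]=\Delta_z\cos^2\theta_z,\qquad
\sum_{q=0,1}\bigl[H(v_q^\circ)-H(z)\bigr]=-\Delta_z\sin^2\theta_z,
\]
where $\Delta_z:=\tfrac12\sum_{c}\bigl(X(c)^2-Y(c)^2\bigr)$, the sum running over the four corners of $z$ (this form of $\Delta_z$ is visibly independent of the choice of lifts). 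Since $0<\theta_z<\tfrac12\pi$, the two sums have opposite signs. Hence, if $\Delta_z>0$ then $H$ strictly exceeds $H(z)$ at one black neighbour of $z$ and is strictly below $H(z)$ at one white neighbour, so $H(z)$ is neither $\geq$ nor $\leq$ all of its neighbours; the case $\Delta_z<0$ is symmetric; and if $\Delta_z=0$ then $H(z)$ is simultaneously the average of its two black and of its two white neighbouring values, so a local extremum at $z$ forces $H$ to equal $H(z)$ at all four neighbours. In every case no quad centre can be an interior local extremum of $H$ without $H$ being locally constant.

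It then remains to treat an interior black vertex $v^\bullet$, surrounded by quads $z_1,\dots,z_k$ and white neighbours $w_1,\dots,w_k$. Here \eqref{eq:HX-def} gives $H(v^\bullet)-H(w_j)=X(v^\bullet w_j)^2-Y(v^\bullet w_j)^2$ and $H(v^\bullet)-H(z_j)=\epsilon_j\bigl(X(v^\bullet w_{j-1})X(v^\bullet w_j)-Y(v^\bullet w_{j-1})Y(v^\bullet w_j)\bigr)\cos\theta_{z_j}$, where $\epsilon_j=\pm1$ records the spinor transition between consecutive corners at $v^\bullet$, with $\prod_j\epsilon_j=-1$ since $\Upsilon^\times(G)$ branches over $v^\bullet$. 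Assuming a local extremum at $v^\bullet$, I would combine the $k$ inequalities coming from the quads $z_j$ with the $k$ coming from the white neighbours $w_j$, so that the signs $\epsilon_j$ play the role that the opposition of $\cos^2\theta_z$ and $-\sin^2\theta_z$ played above and force every inequality to be an equality; for $k=2$ the two quad inequalities are literally opposite and this is immediate, and the general case should go through in the same spirit. White vertices are symmetric, or follow from the Kramers--Wannier duality exchanging $G^\bullet$ and $G^\circ$. This vertex step --- the bookkeeping of the spinor signs $\epsilon_j$ and checking that the resulting system of inequalities collapses exactly as in the quad-centre case --- is the one I expect to be the main obstacle.

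Once local constancy at any interior local extremum is established, the proposition follows by the standard propagation argument. Conceptually the statement is also transparent via Proposition~\ref{prop:shol=3term} and \cite[Section~2.5]{Che20}: $H_X=H_F$ and $H_Y=H_G$ are gradients of harmonic functions on the common S-graph attached to any s-embedding of $(G,x)$, so their difference is harmonic there; I would nonetheless keep the direct combinatorial argument above, which does not presuppose the existence of a proper embedding.
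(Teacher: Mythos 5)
First, a point of comparison: the paper does not actually prove Proposition~\ref{prop:HF-comparison}; it quotes it from \cite[Proposition 2.11]{Che20}, where it is attributed to Park. Measured against that argument, your quad-centre step is correct and complete: the two identities $\sum_p[H(v_p^\bullet)-H(z)]=\Delta_z\cos^2\theta_z$ and $\sum_q[H(v_q^\circ)-H(z)]=-\Delta_z\sin^2\theta_z$ do follow from \eqref{eq:HX-def} and \eqref{eq:3-terms} (I checked them), and they give exactly the right dichotomy at a face centre.

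The gap is the vertex step, and it is not merely an unfinished computation: the statement you plan to prove there is false. A local extremum of $H=H_X-H_Y$ at an interior vertex of $\Lambda(G)$ does \emph{not} force $H$ to be constant on its neighbours. Take $Y\equiv 0$ and let $X$ be a solution whose values at the corners $c_1,\dots,c_4$ around a black vertex $v^\bullet$, read along a chain of neighbouring lifts in $\Upsilon^\times(G)$, are $1,1,1,0$; this is possible because for each incident quad the two corner values at $v^\bullet$ are unconstrained by \eqref{eq:3-terms}, and consecutive incident quads share only the corner at $v^\bullet$. Then $H(v^\bullet)-H(w_j)=X(c_j)^2$ equals $1,1,1,0$, while the increments to the incident quad centres are $a_1a_2\cos\theta,\ a_2a_3\cos\theta,\ a_3a_4\cos\theta,\ -a_4a_1\cos\theta=\cos\theta,\cos\theta,0,0$ (the last sign being the flip caused by the branching of $\Upsilon^\times(G)$ over $v^\bullet$). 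So $v^\bullet$ is a weak local maximum and $H$ is visibly non-constant around it; even in your $k=2$ case only the two quad increments are forced to vanish, not those to the white neighbours. (This does not contradict the proposition itself: one checks that the black vertex opposite $v^\bullet$ across one of the zero-increment quads carries a strictly larger value, so the global maximum sits further out.) What is true at a vertex, and what the proof must use, is weaker: if a local maximum is attained at $v^\bullet$, then the same value is attained at \emph{some} incident quad centre. Indeed, if all increments $H(v^\bullet)-H(z_j)$ were strictly positive, then, since $X(c_j)^2\ge Y(c_j)^2$ at every corner of $v^\bullet$, each difference $X(\hat c_{j-1})X(\hat c_j)-Y(\hat c_{j-1})Y(\hat c_j)$ would force $X(\hat c_{j-1})X(\hat c_j)$ to be nonzero with sign $\epsilon_j$; multiplying around $v^\bullet$ yields $\prod_j\epsilon_j=+1$, contradicting $\prod_j\epsilon_j=-1$. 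Note that this combines the white-neighbour inequalities \emph{multiplicatively} with the quad ones; it is not the additive two-sided cancellation of the face-centre case, so "the same spirit" does not carry over. As a consequence your concluding step also breaks: local constancy being unavailable, one cannot "propagate constancy"; one must instead propagate the set where the extremum is attained, alternating the corrected vertex statement with your quad-centre dichotomy, and pushing that set to the boundary still requires care (a quad together with its four vertices is a priori stable under these two local rules) --- this is precisely the part for which you should follow \cite[Proposition 2.11]{Che20}. Finally, the fallback you mention is not available as stated: Section 2.5 of \cite{Che20} makes s-holomorphic functions gradients of functions harmonic on the S-graphs attached to the embedding solution $\cX$, it does not make $H_X$ and $H_Y$ harmonic on a common graph; the present comparison principle is used exactly as a substitute for such harmonicity.
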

In particular if $H_X$ is bounded at the boundary of a domain (which is trivially the case for the observables defined in Section \ref{sec:proof}), then $H_X=H_F$ is bounded everywhere in the domain.

\subsection{Regularity theory for s-holomorphic functions}\label{sub:regularity}
In this short subsection we recall in a concise way the regularity theory of s-holomorphic functions, which is developed in \cite[Section 2.6]{Che20}, adapting to the Ising context results coming from \cite[Section~6]{CLR1}. Our proof of crossing estimates is based on a contradiction between the discrete behavior and its limiting counterpart, and thus we need to explain how to extract sub-sequential limits from discrete observables. We start by recalling that regularity theory of s-holomorphic functions requires adding one (mild) geometrical constraints on potential local degeneracies of the embedding, following \cite[Assumption 1.2]{CLR1}. In that case, an s-holomorphic function $F$ satisfies a standard Harnack-type estimate that controls $|F|^2 $ via the gradient of the function $H_F$, except in some pathological scenario where an exponential blow up in $\delta^{-1} $ occurs. 

\begin{theorem}{\cite[Theorem 2.18]{Che20}} \label{thm:F-via-HF} For each~$\kappa<1$ there exist constants~$\gamma_0=\gamma_0(\kappa)>0$ and \mbox{$C_0=C_0(\kappa)>0$} such that the following alternative holds. Let~$F$ be a s-holomorphic function defined in a ball of radius~$r$ drawn over an s-embedding~$\cS$ satisfying the assumption~$\textup{Lip}(\kappa,\delta)$. Then,
\[
\begin{array}{rcl}
\text{either}\ \max_{\{z:\cS(z)\in B(u,\frac{1}{2}r)\}}|F|^2&\le& C_0r^{-1}\cdot\osc_{\{v:\cS(v)\in B(u,r)\}}H_F\\[4pt]
\text{or}\ \max_{\{z:\cS(z)\in B(u,\frac{3}{4}r)\}}|F|^2&\ge& \exp(\gamma_0r\delta^{-1})\cdot C_0r^{-1}\osc_{\{v:\cS(v)\in B(u,r)\}}H_F
\end{array}
\]
provided that~$r\ge\cst\cdot\delta$ for a constant depending only on~$\kappa$.
\end{theorem}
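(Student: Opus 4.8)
The plan is to reduce the statement to the corresponding result for $t$-holomorphic functions proved in \cite[Section 6]{CLR1}, using the identification (recalled in Section \ref{sub:semb-definition}) of a proper s-embedding with a $t$-embedding $\cT=\cS$ equipped with the origami square root $\eta_{c^\bullet}=\eta_{c^\circ}=\overline{\varsigma}\eta_c$, and the identification $H_F=H_X$ from \cite[Lemma 2.9]{Che20}. First I would recall that on an $s$-embedding satisfying $\textup{Lip}(\kappa,\delta)$, the two functions $\cS$ and $\cQ$ behave at scales $r\gg\delta$ like a genuine planar chart: the $1$-Lipschitz property of $\cQ$ upgrades to $\kappa$-Lipschitz on displacements of size $\ge\delta$, so the map $v\mapsto(\cS(v),\cQ(v))$ is a bi-Lipschitz embedding of a piece of $\Lambda(G)$ into the space-like cone of $\R^{2,1}$, uniformly at scales $\ge\cst\cdot\delta$. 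This is exactly the structural input that \cite[Section 6]{CLR1} uses to run the regularity theory of $t$-holomorphic functions, once the local non-degeneracy hypothesis \cite[Assumption 1.2]{CLR1} is in force.

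Next I would set up the harmonicity picture. By \cite[Section 2.5]{Che20} / \cite[Section 4.2]{CLR1}, an $s$-holomorphic $F$ on a ball of radius $r$ is the ``discrete gradient'' of the harmonic function $H_F$ on the associated S-graph, with the schematic relation $|F(z)|^2\asymp r^{-1}\cdot(\text{local increment of }H_F)$ away from degeneracies. The comparison/maximum principle for $H_F$ (Proposition \ref{prop:HF-comparison}) controls $\osc H_F$ on the smaller ball by $\osc H_F$ on the larger ball, so the task is to turn the pointwise bound $|F|^2\lesssim r^{-1}\,\nabla H_F$ into the claimed global bound $\max_{B(u,\frac12 r)}|F|^2\le C_0 r^{-1}\osc_{B(u,r)}H_F$. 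This is where the dichotomy enters: running the Harnack-type propagation of the estimate along a chain of quads from the center out to radius $r$, each step is lossless unless one passes through a ``pinched'' region, i.e. a connected cluster of quads with inscribed radius $r_z<\delta\exp(-\gamma_0 r\delta^{-1})$; such clusters are exactly what \ExpFat\ forbids at macroscopic scale, and when present they can only create the exponential-blow-up alternative $\max_{B(u,\frac34 r)}|F|^2\ge\exp(\gamma_0 r\delta^{-1})\cdot C_0 r^{-1}\osc_{B(u,r)}H_F$. Concretely, I would quote \cite[Section 6]{CLR1} for the first (good) branch on the region where all traversed quads are fat, and track how the multiplicative constant degrades by a factor $(r_z/\delta)^{-1}$ each time a thin quad is crossed; summing these degradations along the worst chain of length $O(r\delta^{-1})$ in quad-steps produces the stated $\exp(\gamma_0 r\delta^{-1})$ factor, which gives the second branch. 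The constants $\gamma_0(\kappa)$ and $C_0(\kappa)$ come out of the bi-Lipschitz constants produced by $\textup{Lip}(\kappa,\delta)$ and are uniform as long as $\kappa$ stays bounded away from $1$.

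The main obstacle I anticipate is the \emph{uniform} control of the Harnack propagation across a region that may contain many consecutive near-degenerate quads without \ExpFat\ being assumed in this theorem — i.e. making precise that the \emph{only} obstruction to the good branch is the appearance of exponentially thin quads, and that their cumulative effect is at most $\exp(\gamma_0 r\delta^{-1})$ rather than something larger. This requires a careful ``transport of the estimate'' argument along discrete paths in $\Dm(G)$ that stays quantitative through non-convex tangential quads, exactly as in \cite[Section 6]{CLR1}; the role of $\textup{Lip}(\kappa,\delta)$ is to guarantee that at the scale $r\ge\cst\cdot\delta$ the ambient geometry (distances measured in $\cS$ versus the S-graph metric) is comparable, so that the harmonic-measure estimates underlying Harnack do not themselves degenerate. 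Once this transport lemma is in place, the theorem follows by combining it with the maximum principle for $H_F=H_X$ and the relation $|F|^2\asymp r^{-1}\nabla H_F$, which is essentially a translation into Ising notation of the $t$-holomorphic statements already available in \cite[Section 6]{CLR1}.
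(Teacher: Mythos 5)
First, a point of reference: the paper does not contain a proof of this statement at all --- it is imported verbatim as \cite[Theorem 2.18]{Che20}, and the surrounding text only records that the proof adapts the regularity theory of t-holomorphic functions from \cite[Section 6]{CLR1} to the Ising setting. So there is no in-paper argument to compare against; your general framing (pass to the t-embedding $\cT=\cS$ with its origami square root, identify $H_F=H_X$, use the maximum principle and the S-graph/harmonicity picture) does match the provenance of the result, but the assessment below concerns whether your sketch actually yields the stated alternative.

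It does not, for two concrete reasons. (i) The theorem assumes only $\textup{Lip}(\kappa,\delta)$; you lean on \cite[Assumption 1.2]{CLR1} (the \ExpFat\ hypothesis) being ``in force'', which is precisely what the dichotomy formulation is designed to avoid --- indeed, the present paper later rules out the second branch by a separate argument based on fat \emph{boundary} quads (Remark \ref{rem:regularity}, Proposition \ref{prop:comapctness}), which would be pointless if \ExpFat\ were needed to state the alternative in the first place. (ii) More seriously, your accounting --- a loss of order $(r_z/\delta)^{-1}$ per thin quad crossed, summed along a chain of $O(r\delta^{-1})$ quad-steps --- cannot deliver the statement. Under $\textup{Lip}(\kappa,\delta)$ alone the inscribed radii $r_z$ may be arbitrarily small (far below $\delta e^{-\delta^{-1}}$) and a segment of length $r$ may meet arbitrarily many quads, so neither the size of each loss nor the number of factors is controlled by $r/\delta$; and even if it were, bounding the cumulative degradation would only give the upper bound $\max|F|^2\le \exp(\gamma_0 r\delta^{-1})\,C_0r^{-1}\osc H_F$, which is \emph{not} the second branch. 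The second branch is a \emph{lower} bound: one must show that as soon as the Harnack estimate fails at some $z_0$ with $\cS(z_0)\in B(u,\tfrac{1}{2}r)$, there exist quads in $B(u,\tfrac{3}{4}r)$ where $|F|^2$ exceeds the benchmark by the factor $\exp(\gamma_0 r\delta^{-1})$; it is exactly this ``gap'' structure that the applications exploit (a sub-exponential a priori bound then forces the first branch). Establishing it requires a constructive propagation argument --- failure of the good bound must force $|F|$ to increase by a definite factor $1+c(\kappa)$ over each displacement of order $\delta$, iterated at least $c\,r/\delta$ times before leaving $B(u,\tfrac{3}{4}r)$ --- which is the step you yourself flag as ``the main obstacle'' and do not supply. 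As written, the proposal therefore proves at best a weaker one-sided estimate and not the theorem.
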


\begin{rem}\label{rem:regularity}
We will use in our specific case Theorem \ref{thm:F-via-HF} to deduce that some properly chosen s-holomorphic functions $F^\delta$ form a precompact family in the topology of the uniform convergence on compacts as $\delta\to 0$, at least in a suitable open subset of the special domain constructed in \ref{sec:geometry}. Indeed, those functions are uniformly bounded and $\beta$-H\"older (see \cite[Theorem 2.18]{Che20}) on scales above $\cst(\kappa)\cdot \delta$. In that case, instead of using a reasoning similar to \cite[Corollary 2.20]{Che20}, we will use the existence of fat-enough boundary tangential quadrilaterals to rule out the second alternative of Theorem \ref{thm:F-via-HF}, allowing to prove boundedness and then compactness of the observables of interest.
\end{rem}

\subsection{Subsequential limits of s-holomorphic functions} \label{sub:shol-limits}
We discuss now the behavior of subsequential limits of s-holomorphic functions, under the general hypothesis $\LipKd\ $, following \cite[Section 2.7]{Che20}.
In what follows, we work with proper s-embeddings $\cS^\delta$ all satisfying the assumption \LipKd\ as $\delta\to 0$ \emph{for the same constant} $\kappa <1 $, such that their respective images cover a given ball $U=B(u,r)\subset\C$. As the functions $\cQ^{\delta}$ all are $\kappa$-Lipschitz above scale $O(\delta)$ and are defined up to an additive constant, there exist a sub-sequence $\delta_k\to 0$ and a $\kappa $-Lipschitz function $\vartheta:U\to\R$ such that uniformly on compacts of $U,$ 
\begin{equation}
\label{eq:Q-to-theta}
\cQ^{\delta_{k}}\circ (\cS^{\delta_{k}})^{-1}\to\vartheta.
\end{equation}
Assuming now we are in the first alternative of Theorem~\ref{thm:F-via-HF} and that \eqref{eq:Q-to-theta} holds, consider $f:U\to\C$ a subsequential limit of s-holomorphic functions~$F^\delta$ on~$\cS^\delta$. Then following \cite[Proposition 2.21]{Che20} and setting $\varsigma=e^{i\frac{\pi}{4}}$ as in \eqref{eq:def-eta}, the differential form $\tfrac{1}{2}(\overline{\varsigma}fdz+\varsigma\overline{f}d\vartheta)$ is closed. This comes as a natural counterpart of the consistency in the definition of the primitive $I_{\mathbb{C}}$ in \eqref{eq:def-I_C} (as contour integrals of discrete functions vanish before passing to the limit).
With a consistent choice of additive constants, the associated functions $H_{F^\delta}$ also converge uniformly on compact subsets of $U$ to $h:=\int(\Im(f^2dz)+|f|^2d\vartheta)$.

The previous condition on closeness of $\tfrac{1}{2}(\overline{\varsigma}fdz+\varsigma\overline{f}d\vartheta)$ is not  easily tractable and hard to interpret in terms of the local relation satisfied by $f$. In \cite[Section 2.7]{Che20}, Chelkak provides a nicer description when passing to the conformal parametrization of the an appropriate surface in the Minkowski space $\R^{2,1}$, equipped with the inner product of signature $(2,1)$.

On first recalls that the function $\vartheta$ is $\kappa$-Lipschitz, thus differentiable almost everywhere. One can then consider (as in \cite[Equation (2.26)]{Che20}) an orientation-preserving \emph{conformal parametrization} of the space-like surface $(z,\vartheta(z))_{z\in U}$, equipped with a positive metric coming from the ambient Minkowski space
\begin{equation}
\label{eq:zeta-param}
 \mathbb{D} \ni\zeta\ \mapsto\ (z,\vartheta)\in U\times\R\  \subset \C\times\R \ \cong \R^{2+1}
\end{equation}
As noted in \cite[below Equation (2.26)]{Che20}, in the case where $\vartheta $ is a smooth function, the angles (measured in $\R^{2,1}$) of infinitesimal increments are preserved by the mapping \eqref{eq:zeta-param} if and only if \emph{everywhere} in $\mathbb{D}$, one has \cite[Equation (2.27)]{Che20}
\begin{equation}
\label{eq:conf-param}
z_\zeta\overline{z}_\zeta\ =\ (\vartheta_\zeta)^2\quad \text{and}\quad |z_\zeta|>|\vartheta_\zeta|\ge |\overline{z}_\zeta|\,,
\end{equation}
where $z_\zeta:=\partial z/\partial\zeta$ (similarly, $\overline{z}_\zeta$ and $\vartheta_\zeta$) stands for the usual Wirtinger derivatives. 

Without assuming any smoothness assumption on $\vartheta$, one can note that conformal parametrization of \eqref{eq:conf-param} can be equivalently rewritten as a \emph{quasi-conformal} map $z\mapsto \zeta(z)$ (solution to a Beltrami equation):
\begin{equation}\label{eq:Beltrami-equation}
\zeta_{\bar{z}}=\mu(z)\zeta_{z},
\end{equation}
(or in an equivalent way to $z_{\zeta}= - \overline{\mu(\zeta)} z_{\zeta}$), with the Beltrami coefficient $\mu$ given by the equation
\begin{equation}\label{eq:Beltrami-coefficient}
\frac{\bar{\mu}}{1+|\mu|^2}= - \frac{\vartheta_{z}^2}{1-2|\vartheta_z|^2}.
\end{equation}
This equivalence can be seen by plugging the identity $\vartheta_{\zeta}=\vartheta_{z}z_{\zeta} + \vartheta_{\bar{z}} \bar{z}_{\zeta} $ (using the fact that $\vartheta$ is real valued) into the condition \eqref{eq:conf-param}, which rewrites as $-\overline{\mu}=(\vartheta_{z} - \overline{\mu}\vartheta_{\bar{z}})^2 $. Since $\vartheta_{\bar{z}}=\overline{\vartheta_z} $, this justifies the equality \eqref{eq:Beltrami-coefficient}. Moreover the function $z\mapsto \vartheta(z)$ is $\kappa$-Lipschitz for some $\kappa <1$, which proves that $|\vartheta_{z}| \leq \frac{\kappa}{2} $ and ensures that $|\mu|\leq \text{cst}(\kappa) <1 $ (see \cite[Equation (2.30)]{Che20}). One can now proceed as follows
\begin{itemize}
\item Compute the Beltrami coefficient $\mu \in L^{\infty}$ from the equation \eqref{eq:Beltrami-coefficient} \emph{almost everywhere}, as $\vartheta$ is differentiable almost everywhere for the Lebesgue measure.
\item Use the Ahlfors-Bers's measurable Riemann mapping theorem \cite[Chapter 5]{AIM} to solve the Beltrami equation \eqref{eq:Beltrami-equation}, constructing a quasi-conformal uniformization $\zeta:U\mapsto \mathbb{D} $ such that \eqref{eq:conf-param} holds almost everywhere in $\mathbb{D}$.
\end{itemize}

\bigskip
In the $\zeta\in \mathbb{D}$ parametrization, one can make a convenient change of variables as in \cite[Equation (2.28)]{Che20}, by defining the functions
\begin{equation}
\label{eq:f-to-phi-change}
\phi (\zeta) :=\ \overline{\varsigma}f(z(\zeta))\cdot (z_\zeta)^{1/2}+\varsigma\overline{f(z(\zeta))}\cdot (\overline{z}_\zeta)^{1/2}\,
\end{equation}
Under this change of variables, $I_{\mathbb{C}}[f]:= \int \overline{\varsigma}f(z)dz + \varsigma \overline{f(z)}d\vartheta $ reads as
\begin{equation}
\label{eq:Ic-to-phi-change}
g(\zeta)= \displaystyle \int \overline{\varsigma} \phi(\zeta) \cdot z_{\zeta}^{\frac{1}{2}} d\zeta+ \varsigma \overline{\phi(\zeta)}\cdot (z_{\bar{\zeta}})^{\frac{1}{2}}d\bar{\zeta}.
\end{equation}
Computing the Wirtinger derivatives in the $\zeta $ variable and using the almost everywhere relation \eqref{eq:conf-param}, one sees directly that $g$ satisfies a \emph{conjugate} Beltrami equation
\begin{equation}\label{eq:g_beltrami}
g_{\overline{\zeta}} = i \overline{\nu}\cdot \overline{g_{\zeta}} ~~~~~~~~ \text{ with } ~~~~~~~~ \nu:= - \frac{(\overline{z}_{\zeta})^{\frac{1}{2}}}{(z_{\zeta})^{\frac{1}{2}}}=-\frac{\vartheta_{\zeta}}{z_{\zeta}} 
\end{equation}
where the Beltrami coefficient $\nu$ is bounded away from $1$ (see \cite[Equation (2.30)]{Che20}) this bound depends again on $\kappa $). Indeed, one can see from the identity $\vartheta_{\zeta}=\vartheta_{z}z_{\zeta} + \vartheta_{\bar{z}} \overline{z}_{\zeta} $ that $|\nu|<2|\vartheta_{z}|\leq \kappa <1 $.  

The focus on the function $g$ is twofold: beyond being also a solution to the conjugate Beltrami equation \eqref{eq:g_beltrami}, it is constructed as the primitive of  a continuous differential form and thus inherits some additional a priori regularity. Provided one knows that $f$ is locally bounded, one can deduce directly that
\begin{itemize}
\item The function $g$ has bounded distortion (see e.g. \cite[Equation (2.27)]{AIM} for the precise definition), smaller than $\frac{1+\kappa}{1-\kappa}$.
\item The function $g$ also belongs to $L^{1,2}_{loc} $. First recall the formula $g_{\zeta} = \overline{\varsigma}f(z(\zeta))z_{\zeta} + \varsigma \bar{f}(z(\zeta))\vartheta_{\zeta} $ and that $f(z(\zeta))$ is locally bounded. The Jacobian of $z$ satisfies $\text{Jac}(z):= | z_{\zeta}|^2 - | z_{\bar{\zeta}}|^2 \geq (1-\kappa^2) | z_{\zeta}|^2 $ as equation \eqref{eq:conf-param} ensures that $|z_{\bar{\zeta}}|\leq \kappa |z_{\zeta}| $ in the $\zeta$ parametrization.  It is then possible to use the area principle. Namely, $\int_{\Omega}  | z_{\zeta}|^2 d^{2}\zeta \leq (1-\kappa^2)^{-1} \int_{\Omega}  \text{Jac}(z)d^2z= (1-\kappa^2)^{-1} \text{Area}(\Omega)<\infty $. This ensures then that  $g$ belongs to $L^{1,2}_{loc} $.
\end{itemize}
Combining the two previous observations, one can apply the Stoïlov factorization stated in \cite[Corollary $5.3.3$]{AIM} which allows to write the factorization $g= \underline{g} \circ p $ with $p:\Omega \rightarrow \Omega$ a $\beta(\kappa)$-Hölder homeomorphism and $\underline{g}$ a holomorphic function. In particular, $g$ cannot be constant in an open set except if it is constant everywhere in the domain.

\section{Construction of a special discrete domain} \label{sec:geometry}
In this section, we explain how one can start from a given piece of an s-embedding satisfying the assumptions \LipKd\, and \OneExpFat\, and construct a discrete simply connected domain containing two pieces of the square lattice, which will allow a rather simplified proof of Theorem \ref{thm:positive-magnetization}. Once Theorem \ref{thm:positive-magnetization} is proven in those special domains, it is  rather straightforward to transfer it to usual discretization of rectangles by standard monotonicity arguments for the Ising model. In order to keep the presentation as simple as possible,  we start this section with a subsection containing reminders on rather simple geometrical features of tangential quadrilaterals. As one will see in the following lines, extending a finite piece of a tilling formed by tangential quadrilaterals and pasting it to a piece of the square lattice, while remaining in the tangential quad setup (i.e. requiring that all faces $\Lambda(G)$ are tangential quadrilaterals) is not obvious at first sight. Still, one notes that in the case where all boundary vertices are aligned along a straight line, there exist rather straightforward extension strategies, either by using symmetry arguments or by pasting layers of kites and squares.

\subsection{Features of tangential quadrilaterals}\label{sub:geometrical_features}
In the following list of claims, we recall simple geometrical features on tangential quadrilaterals. The claims are illustrated by Figure  \ref{fig:geometrical-claims}. Working on an s-embeddings that satisfy \LipKd\ for some fixed $\kappa <1 $, all the edges of any quad of $\cS^{\delta}(\diamondsuit(G)) $ are of length smaller than $\delta $. Indeed, along the edges of a quad $z$, the origami map $\cQ^{\delta} $ is real a linear function with rate of growth $\pm 1$. To lighten the notations, we identify each vertex of $\Lambda(G) $ with its image in $\cS^\delta(\Lambda(G) )$ in the embedding.

Let $z=(v_{0}^{\circ}v_{0}^{\bullet}v_{1}^{\circ}v_{1}^{\bullet})$ be a tangential quadrilateral with an inscribed circle of radius $r_z$ and centered at $\hat{z}$. In what follows, we do not ask for the convexity of $z$, but always assume its edges are all of length smaller than $\delta$. Then one has:
\begin{enumerate}[label=(\Alph*)]
\item The four bisectors of the angles of the quadrilateral $z$ intersect at $\hat{z}$.
\item The area of the tangential quadrilateral is the product of the radius of its inscribed circle $r_z$ by the half-perimeter of $z$. In the following paragraphs, in several occasions, we will bound from below the radius of a tangential quad. This is done by bounding from below the area of $z$ and from above its perimeter.
\item Let $\phi_{v,z} $ be one of the half-angles of $z$ at $v$ (i.e. the angle formed by an edge containing $v$ and the bisector that links $v$ to $\hat{z}$). Then, provided the angle $\phi_{v,z} $ is smaller than $\frac{\pi}{4}$, there exists a universal constant $C$ such that $\phi_{v,z} \geq C \tan \phi_{v,z} \geq C \frac{r_z}{\delta}$. The left inequality is true when $\phi_{v,z} $ goes to $0$ while the right inequality is a direct computation on the straight triangle formed by $v,\hat{z} $ and the orthogonal projection of $\hat{z}$ on one of the edges containing $v$. 

\item Fix an edge $e=[v^{\circ}v^{\bullet}]$ of a tangential quadrilateral $z$ attached to vertices $v^{\circ}$ and $v^{\bullet}$ whose respective angles are $\phi_{v^{\circ},z} $ and $\phi_{v^{\bullet},z} $. The length of the edge $e$ equals $r_{z}(\cot(\phi_{v^{\circ},z})+\cot(\phi_{v^{\bullet},z})) \geq r_{z}\sin(\phi_{v^{\circ},z}+\phi_{v^{\bullet},z}) $.

\item Fix three vertices $v_{0,1}^{\circ}$ and $v_{0}^{\bullet} $. The set $\mathcal{C} $ of points $v^{\bullet}$ in the plane such that $(v_{0}^{\circ}v_{0}^{\bullet}v_{1}^{\circ}v^{\bullet})$ is a tangential quadrilateral is an \emph{hyperbola} (potentially degenerated to a straight line) passing through $v_{0}^{\bullet} $ and $v_{1}^{\bullet} $. Indeed, denoting $(x,y)$ the coordinates of $v^{\bullet}$ and writing the equality $|v^{\bullet}-v_{0}^{\circ}| -|v^{\bullet}-v_{1}^{\circ}|= |v_{0}^{\bullet}-v_{0}^{\circ}| -|v_{0}^{\bullet}-v_{1}^{\circ}| $, one recovers the algebraic equation of an hyperbola. The conical $\mathcal{C}$ is clearly unbounded and admits the bisector of the angle $(v_{0}^{\circ}v_{0}^{\bullet}v_{1}^{\circ}) $ as an asymptote when $v^{\bullet}$ goes to infinity.

\item Let $T=(JKL)$ be a triangle (whose vertices are labeled in the counter-clockwise order) and consider $\frak C$ the circle of radius $r$ which is tangential to its three sides. Consider e.g. $M$ the point at the intersection of $\frak C$ and the segment $[JL]$. Then one can view the quadrilateral $(JKLM)$(still labeling vertices in the counter-clockwise order) as a tangential quadrilateral, whose tangential circle is $\frak C$.

\end{enumerate}

We say that a tangential quadrilateral $z$ is $\beta$-fat is the associated tangential circle has a radius $r_z \geq \beta $.

\begin{figure}
\begin{minipage}{0.325\textwidth}
\includegraphics[clip, width=1.2\textwidth]{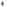}
\end{minipage}\hskip 0.01\textwidth \begin{minipage}{0.33\textwidth}
\includegraphics[clip, width=1.2\textwidth]{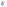}
\end{minipage}
\begin{minipage}{0.325\textwidth} 
\includegraphics[clip, width=\textwidth]{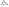}
\end{minipage}
\caption{(Left) Notations for the tangential quadrilateral with center $\hat{z}$. The four bisectors intersect at the center $\hat{z}$ of the tangential quadrilateral. (Middle) Hyperbola $\mathcal{C}$ drawn in blue denoting in claim (E) the set of points such that $(v_{0}^{\circ}v_{0}^{\bullet}v_{1}^{\circ}v^{\bullet})$ is a tangential quadrilateral.  (Right) Transformation described in (F) of a triangle into a tangential quadrilateral by adding as a vertex one tangency point of $\frak C$ with one of the sides of the triangle.}
\label{fig:geometrical-claims}
\end{figure}

\subsection{Horizontal alignment of the lattice}\label{sub:construction_extension}

We now present the welding procedure of a piece of a given s-embedding $\cS^{\delta}$ to a piece of the square lattice. The local constrain of all quadrilaterals being tangential is rather unpleasant to handle, thus we will first straighten the boundary by aligning along the same line all its vertices, which provides a simpler situation to handle. The first step is to \emph{slice} horizontally a piece of an s-embedding to extend it afterwards by periodic layers. We present now a very concrete method to replace the intersection of a tangential quadrilateral and a closed half-plane by a similar picture but this time with bottom boundary vertices all aligned along the border of that half-plane. We give the construction for vertical half-planes, but it is rather straightforward to adapt for general half-planes.

\begin{defi}
Let $z\in \diamondsuit(G)$ a tangential quadrilateral, $\overline{z} $ the topological closure of $z$ and $y$ a horizontal level  intersecting $z$ away from its vertices. We say that $Z \subseteq \overline{z}$ is a \emph{horizontal alignment} from above of $z$ at level $y$ if
\begin{itemize}
\item $Z$ is the union of at most $3$ tangential quadrilaterals (see Figure \ref{fig:alignement}).
\item $Z=\overline{z} \cap (iy + \overline{\mathbb{H}}) $ i.e. $Z$ is the intersection of $z$ and the close half plane above level $y$.
\end{itemize}
\end{defi}

One can define in a similar fashion horizontal alignment from below, which concerns the close half plane below level $y$. The next lemma ensures that it is possible to perform a horizontal alignment (from above or from below) of a given tangential quadrilateral.

\begin{lem}\label{lem:horizontal_allignement}
Let $z$ a tangential quadrilateral and $y$ a vertical level (represented by the line $\mathbb{R}+iy$ in $\mathbb{C}$) that intersects $z$ at a level that doesn't contain its highest vertex. Then there exist an horizontal alignment from above of $z$ at level $y$.
\end{lem}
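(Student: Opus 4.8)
The statement is purely about plane geometry of a single tangential quadrilateral $z$ with inscribed circle $\mathfrak{C}$ of radius $r_z$ centered at $\hat z$, cut by a horizontal line $\ell = \{iy + \mathbb{R}\}$ that meets $z$ at a level below its highest vertex. I want to exhibit $Z := \overline z \cap (iy + \overline{\mathbb{H}})$ as a union of at most three tangential quadrilaterals, all tangent to (pieces of) the same incircle, so that the local s-embedding condition is preserved. The plan is to do a short case analysis on where the cutting line $\ell$ sits relative to the incircle $\mathfrak{C}$ and relative to the vertices of $z$.

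\textbf{First reduction: the shape of $Z$.} Since $z$ has all edges of length $<\delta$ and is (possibly non-convex) tangential, the line $\ell$ enters and exits $\overline z$ through its boundary. I would first argue that $\ell$ crosses the boundary of $z$ in exactly two points, $p$ and $q$ (using that $y$ is below the top vertex and $\ell$ meets $z$ away from vertices; if $z$ is non-convex a little care is needed but the cut level being generic rules out tangential intersections). Then $Z$ is a polygon whose boundary consists of the segment $[p,q] \subset \ell$ together with the upper arc of $\partial z$. Depending on how many of the four vertices of $z$ lie strictly above $\ell$, the polygon $Z$ has $3$, $4$, or $5$ vertices. The key geometric input is claim (F) from Section~\ref{sub:geometrical_features}: \emph{a triangle together with one tangency point of its incircle on one side, in cyclic order, is a tangential quadrilateral with that same incircle.} Iterating, any polygon all of whose sides are tangent to a fixed circle $\mathfrak{C}$ can be subdivided — by inserting tangency points of $\mathfrak{C}$ as extra vertices — into tangential quadrilaterals sharing $\mathfrak{C}$.

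\textbf{Second step: ensuring the sides of $Z$ stay tangent to $\mathfrak{C}$.} Three of the (up to five) sides of $Z$ lie on lines containing edges of $z$, hence are automatically tangent to $\mathfrak{C}$. The new side $[p,q]\subset\ell$ need \emph{not} be tangent to $\mathfrak{C}$. This is where the construction must be adapted, and this is the main obstacle. The remedy: do not cut along $\ell$ directly, but choose the horizontal alignment to be $Z = \overline z \cap (iy+\overline{\mathbb H})$ only as a \emph{set}, while its decomposition into tangential quads is allowed to use auxiliary tangent lines. Concretely, I would translate the incircle downward (keeping it inscribed in the relevant cone of $z$) or, better, pick the tangent line to $\mathfrak{C}$ that is horizontal — there are exactly two, at heights $\hat z \pm r_z$ — and handle separately the cases (i) $\ell$ lies at or below the lower horizontal tangent $\hat z - r_z$, where $Z\cap \mathfrak C$ is empty or a single point and $Z$ is a triangle or quadrilateral whose new side can be taken along the horizontal tangent itself after a trivial further triangle-split via (F); (ii) $\ell$ strictly separates $\hat z - r_z$ from $\hat z$, where $\ell$ genuinely cuts the disc; here I replace the single segment $[p,q]$ by \emph{two} segments meeting at the lowest point $\hat z - r_z i$ of $\mathfrak{C}$ — but that point lies below $\ell$, so instead I use the two tangent lines to $\mathfrak C$ through $p$ and through $q$ respectively, which cut off two small tangential triangles, leaving a tangential polygon above; applying (F) to each piece yields at most three tangential quadrilaterals. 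Counting vertices carefully in each subcase confirms the bound of three quads in the worst case (when two vertices of $z$ survive above $\ell$).

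\textbf{Third step: bookkeeping and the count.} I would organize the final argument by the number $k\in\{0,1,2\}$ of vertices of $z$ strictly above $\ell$ (the case $k=3$ is excluded since $y$ is below the top vertex, and $k=4$ would mean $\ell$ misses $z$). For $k=0$, $Z$ is a triangle-or-quad near the top vertex and splits into $\le 2$ tangential quads; for $k=1$, $Z$ is bounded by two original edges plus the cut, giving $\le 2$; for $k=2$, $Z$ is bounded by three original edges plus the cut, and the tangent-line trick plus (F) gives $\le 3$. In each case one checks the two defining properties of a horizontal alignment: $Z = \overline z \cap (iy+\overline{\mathbb H})$ holds by construction, and $Z$ is a union of $\le 3$ tangential quads by the above. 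The routine part is verifying that the auxiliary tangent lines through $p,q$ actually cut off triangles contained in $Z$ (which follows from $\ell$ separating these points from $\hat z$) and that the leftover polygon is still tangential with the same incircle; I would not spell out these elementary incidence computations in full. The genuinely delicate point — and the one I would emphasize — is that the segment introduced along $\ell$ is generally \emph{not} tangent to the original incircle, so one cannot simply "cut and split"; the fix is to absorb the discrepancy into at most one extra tangential triangle per endpoint, which is exactly what keeps the total at three.
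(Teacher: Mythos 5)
Your plan diverges from what the lemma actually requires, and the divergence is fatal at two points. First, your key reduction --- ``any polygon all of whose sides are tangent to a fixed circle $\mathfrak{C}$ can be subdivided, by inserting tangency points of $\mathfrak{C}$ as extra vertices, into tangential quadrilaterals sharing $\mathfrak{C}$'' --- is false: inserting tangency points only adds boundary vertices, it does not subdivide the region, and any genuine subdivision requires interior chords, which are not tangent to $\mathfrak{C}$, so the pieces cannot all be circumscribed about the same circle. In the worst case ($Z$ a pentagon) your tangent-line trick leaves a middle region bounded by three sides of $z$ and two auxiliary tangent lines, i.e.\ a circumscribed pentagon, and you have no valid way to turn it into a single tangential quadrilateral, so the count of at most three pieces collapses. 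The insistence on keeping the original incircle is in fact a red herring: the paper's decomposition produces quadrilaterals each tangential to its \emph{own} circle. The missing idea is claim (E): one slides a new vertex $\tilde v^{\bullet}$ along the hyperbola of points making $(v_0^\circ\, \cdot\, v_1^\circ\, v^{\bullet})$ tangential (a continuous curve inside $z$ joining $v_0^\bullet$ to $v_1^\bullet$) until it reaches the level line; tangentiality of the large piece is preserved by construction, and only two leftover \emph{triangles} remain, which become tangential quadrilaterals by claim (F). When two vertices of opposite colours lie below the level, this hyperbola argument is applied twice. Nothing in your proposal replaces this continuity argument, and without it the pentagon cannot be handled.

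Second, your ``first reduction'' that the level line meets $\partial z$ in exactly two points, so that $Z$ is a single polygon, fails for non-convex tangential quadrilaterals: when the two vertices below the level have the same colour (the paper's case 2b), $Z=\overline z\cap(iy+\overline{\mathbb H})$ consists of \emph{two} triangles, and the line crosses the boundary four times; this is a robust configuration, not a non-generic tangency that can be dismissed. Your case bookkeeping is also off: with the level strictly below the highest vertex the number of vertices strictly above is $1$, $2$ or $3$, whereas you consider $k\in\{0,1,2\}$ and exclude $k=3$, i.e.\ you discard the case of exactly one vertex below the level (the paper's case 1) and include an impossible one. Finally, keep in mind that the construction must produce quadrilaterals with alternating $G^\bullet/G^\circ$ vertices whose new boundary vertices lie on the level line, since the output is re-used as part of an s-embedding; your auxiliary tangent-line pieces do not come with such a colouring.
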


\begin{proof}
The proof is made by an explicit construction using the facts recalled above. We make a dichotomy depending on the number of vertices of $z$ below level $y$. We encourage the reader to follow the proof with the pictures of Figure \ref{fig:alignement}.

\underline{\textbf{(1) There is only one vertex of $z$ below level $y$}}

Up to swapping colors, one can assume that the vertex below level $y$ is $v^{\bullet}_0$. Let $\mathcal{C}$ the be hyperbola (claim (E)) of points $ v^{\bullet}$ such that $(v_{0}^{\circ}v_{1}^{\bullet}v_{1}^{\circ}v^{\bullet})$ is a tangential quadrilateral. This hyperbola is a continuous curve \emph{inside} $z$ containing the points $v^{\bullet}_{0}$ and $v^{\bullet}_{1} $. A continuity argument ensures the existence of a point $\tilde{v}^{\bullet}$ with $\Im[\tilde{v}^\bullet]=y$ such that $(v_{0}^{\circ}v_{1}^{\bullet}v_{1}^{\circ}\tilde{v}^\bullet)$ is a tangential quadrilateral. As a consequence (e.g. checking the alternate sum of edge-lenghts), the quadrilateral $(v_{0}^{\circ}v_{0}^{\bullet}v_{1}^{\circ}\tilde{v}^\bullet)$ is also  tangential. Set now the points $\tilde{v}^{\bullet}_{0} $ and $\tilde{v}^{\bullet}_{1} $ to be respectively the intersections of the segments $[v^{\circ}_{0}v^{\bullet}_{0}] $ and $[v^{\circ}_{0}v^{\bullet}_{1}] $ with the level $y$. Using the claim (F), one can transform the triangles $(\tilde{v}^{\bullet} v^{\circ}_{0}\tilde{v}^{\bullet}_{0}) $ and $(\tilde{v}^{\bullet} v^{\circ}_{1}\tilde{v}^{\bullet}_{1}) $ as tangential quadrilaterals by adding repectively the vertices $\tilde{v}^{\circ}_{0,y} $ and $\tilde{v}^{\circ}_{1,y} $ to the segments $[\tilde{v}^{\bullet}_{0} \tilde{v}^{\bullet}] $ and $[\tilde{v}^{\bullet}_{1} \tilde{v}^{\bullet}] $.

\underline{\textbf{(2) There are two vertices of $z$ below level $y$}}

We make a dichotomy of the two subcases that appear here.

(a) The two vertices that lie below level $y$, $v_{0}^{\bullet}$ and $v_{0}^{\circ}$, are of different colors. Consider first $\mathcal{C}_{1}$ the hyperbola of points $ v^{\bullet}$ such that $(v_{0}^{\circ}v_{0}^{\bullet}v_{1}^{\circ}v^{\bullet})$ is a tangential quadrilateral linking continuously $v^{\bullet}_{0}$ to $v^{\bullet}_{1} $  inside $z$. There exists once again of a point $\tilde{v}^{\bullet}$, with $\Im[\tilde{v}^\bullet]=y$, such that $(v_{0}^{\circ}v_{0}^{\bullet}v_{1}^{\circ}\tilde{v}^\bullet)$ is a tangential quadrilateral. As previously, the quadrilateral $(v_{0}^{\circ}v_{1}^{\bullet}v_{1}^{\circ}\tilde{v}^{\bullet})$ is tangential. One then reapply the same strategy, this time to the tangential quadrilateral $(v_{0}^{\circ}v_{1}^{\bullet}v_{1}^{\circ}\tilde{v}^\bullet)$ to construct inside $(v_{0}^{\circ}v_{0}^{\bullet}v_{1}^{\circ}\tilde{v}^\bullet)$ a tangential quadrilateral $(v_{1}^{\circ}v_{1}^{\bullet}\tilde{v}^{\circ}\tilde{v}^\bullet)$ with $\Im[\tilde{v}^\circ]=y$. Set this time $\tilde{v}^{\bullet}_{0} $ and $\tilde{v}^{\circ}_{0} $ the respective intersections of the segments $[v^{\circ}_{1}v^{\bullet}_{0}] $ and $[v^{\bullet}_{1}v^{\circ}_{0}] $ with the line $y$. Using claim (F), one can construct two tangential quadrilaterals out of the triangles $(\tilde{v}^{\bullet}_{0}v^{\circ}_{1}v^{\bullet}_{0}) $ and $(\tilde{v}^{\circ}_{0,y} v^{\bullet}_{1}v^{\circ}_{0})$ by adding respectively one white vertex to the segment $[\tilde{v}^{\bullet}_{0}v^{\bullet}] $ and one black vertex to the segment $[\tilde{v}^{\circ}_{0}\tilde{v}^{\circ}] $, both located on the axis $y$.

(b) The two vertices $v_{0}^{\bullet}$ and $v_{1}^{\bullet}$ below level $y$ are of the same color (that we assume to be black here up to swapping colors). In that case, $(v_{0}^{\circ}v_{0}^{\bullet}v_{1}^{\circ}v^{\bullet})$ is non convex and $\overline{z} \cap (iy + \overline{\mathbb{H}})$ is formed by two triangles containing respectively $v^{\circ}_{0} $ and $v^{\circ}_{1}$. Denote by $\tilde{v}^{\bullet}_{00},\tilde{v}^{\bullet}_{01},\tilde{v}^{\bullet}_{10},\tilde{v}^{\bullet}_{11}$ the intersections of respectively $[v^{\bullet}_{0}v^{\circ}_{0}],[v^{\bullet}_{0}v^{\circ}_{1}],[v^{\bullet}_{1}v^{\circ}_{0}],[v^{\bullet}_{1}v^{\circ}_{1}] $ and this axis $y$. Then the triangles $(\tilde{v}^{\bullet}_{10}\tilde{v}^{\bullet}_{10} v^{\circ}_{0}) $ and $(\tilde{v}^{\bullet}_{01}\tilde{v}^{\bullet}_{11} v^{\circ}_{1}) $ can be viewed as tangential quadrilaterals by adding two white vertices to the segments $[\tilde{v}^{\bullet}_{10}\tilde{v}^{\bullet}_{00}] $ and $[\tilde{v}^{\bullet}_{01}\tilde{v}^{\bullet}_{11}]$ using claim (F).

\underline{\textbf{(3) There are three vertices of $z$ below level $y$}}

Assume e.g. that $v^{\bullet}_{1} $ lies above level $y$. The consider $\tilde{v}^{\circ}_{0} $ and $\tilde{v}^{\circ}_{1} $ the intersections of $y$ and the segments $[v^{\bullet}_{1}v^{\circ}_{0}] $ and $[v^{\bullet}_{1}v^{\circ}_{1}] $. Then one can view the triangle $(v^{\bullet}_{1}\tilde{v}^{\circ}_{0}\tilde{v}^{\circ}_{1}) $ as a tangential quadrilateral by adding a black vertex to the segment $[\tilde{v}^{\circ}_{0}\tilde{v}^{\circ}_{1}]$, using again claim (F).

\begin{figure}
\begin{minipage}{0.40\textwidth}
\includegraphics[clip, width=1\textwidth]{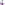}
\end{minipage} \begin{minipage}{0.40\textwidth}
\includegraphics[clip, width=1\textwidth]{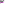}
\end{minipage} \begin{minipage}{0.40\textwidth}
\includegraphics[clip, width=1\textwidth]{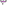}
\end{minipage} \begin{minipage}{0.40\textwidth}
\includegraphics[clip, width=1\textwidth]{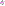}
\end{minipage} 
\caption{Top: (Left) Case 1 (Right) Case 2a. Bottom: (Left) Case 2b (Right) Case 3}\label{fig:alignement}
\end{figure}

\end{proof}

In order to apply regularity theory for s-holomorphic functions and extract subsequential limits of discrete observables, we will see that it is enough to construct a domain with a boundary formed by fat enough quads. As we will see below, if one doesn't choose carefully the level at which it performs an horizontal alignment, it might create new tangential quads whose radius is too small and forbid the use of regularity theory for s-holomorphic functions developed in Section \ref{sub:regularity}. We explain now that, if one starts with a quad in $\cS^{\delta} $ whose radius not so small, there is only a small share of vertical levels intersecting for which horizontal alignments will produce tangential quads with a much smaller radius.The next definition precises this idea.

\begin{defi}
Let $y$ be a horizontal a level that intersects the tangential quad $z$. We say that $y$ is a $\beta$-\emph{bad level} for $z$ if one of the tangential quadrilaterals constructed with the algorithm of the horizontal alignment from above of Lemma \ref{lem:horizontal_allignement} has a radius $r_z$ smaller than $\beta$. The complement of $\beta$-bad levels are called $\beta$-\emph{good levels}.
\end{defi}

The next proposition upper bounds the share of bad levels in a tangential quadrilateral $z$ whose radius is larger than $\exp(-\gamma \delta^{-1})$. Informally speaking, the proof shows that $\beta-$bad levels (for a small $\beta$ compared to $\exp(-\gamma \delta^{-1})$) are only those close to the horizontal lines containing vertices of $z$.

\begin{prop}\label{prop:bad-levels}
Let $z$ be a tangential quad with a radius $r_z \geq \exp(-\gamma\delta^{-1}) $ and whose edges are all of length smaller than $\delta$. Then provided $\delta$ is small enough, the (vertical) one dimensional Lebesgue measure of $\exp(-40\gamma \delta^{-1})$-bad levels intersecting $z$ is at most $4\exp(-4\gamma\delta^{-1})$.
\end{prop}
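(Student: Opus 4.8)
\textbf{Proof plan for Proposition \ref{prop:bad-levels}.}

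The plan is to track, level by level, the radii of the (at most three) tangential quads produced by the horizontal alignment algorithm of Lemma \ref{lem:horizontal_allignement}, and to show that a level $y$ can only be $\exp(-40\gamma\delta^{-1})$-bad if it lies within a controlled distance of one of the (at most four) horizontal lines through the vertices of $z$. Since all edges of $z$ have length at most $\delta$, the vertical extent of $z$ is at most $\delta$, so once one controls the ``width'' of the band of bad levels around each vertex-line by $\exp(-4\gamma\delta^{-1})$, summing over the at most four vertex-lines gives the claimed bound $4\exp(-4\gamma\delta^{-1})$. First I would fix a level $y$ which is \emph{not} within distance $\exp(-4\gamma\delta^{-1})$ of any horizontal line through a vertex of $z$, and show that all the newly created quads are $\exp(-40\gamma\delta^{-1})$-fat.

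The key geometric input is claims (B)--(F) of Section \ref{sub:geometrical_features}. The new quads are of two types. The first type are the quads of the form $(v^\circ_i v^\bullet_j v^\circ_k \tilde v^{\bullet/\circ})$ obtained by sliding a vertex along the hyperbola $\mathcal{C}$ to the level $y$: for these, claim (B) says the radius equals the area divided by the half-perimeter, the perimeter is at most $4\delta$, and the area is bounded below by the area of $z$ truncated at a level bounded away from the extreme vertices — here I would use that away from the bad band the truncation removes/keeps a region of vertical extent $\gtrsim\exp(-4\gamma\delta^{-1})$ and that $z$ itself has area $\gtrsim r_z\cdot(\text{perimeter})\gtrsim r_z\exp(-\gamma\delta^{-1})\cdot\delta$ times trigonometric factors controlled by claim (C)/(D), giving a radius $\gtrsim r_z^{2}\exp(-c\gamma\delta^{-1})\ge\exp(-40\gamma\delta^{-1})$ for $\delta$ small. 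The second type are the quads built from triangles via claim (F): such a quad inherits the \emph{same} inscribed circle as the triangle, so its radius is the inradius of the triangle, which by the area/half-perimeter formula is again $(\text{Area of triangle})/(\text{half-perimeter})$; the triangle has two sides lying on edges of $z$ (length $\le\delta$) and the third on the line $y$, and its area is bounded below by (distance from the opposite vertex of $z$ to $y$) times (a length comparable to $r_z$ coming from claim (D)) — and the distance to $y$ is $\ge\exp(-4\gamma\delta^{-1})$ precisely by our choice of $y$ outside the bad band.

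The main obstacle I expect is bookkeeping the worst case among the four cases (1), (2a), (2b), (3) of Lemma \ref{lem:horizontal_allignement} simultaneously and making the exponents line up: one must verify that in \emph{every} case, and for \emph{each} of the at most three output quads, the lower bound on the radius degrades from the input $r_z\ge\exp(-\gamma\delta^{-1})$ by at most a factor that stays above $\exp(-40\gamma\delta^{-1})$ — i.e. that the constant $40$ is generous enough to absorb the (finitely many, $\kappa$-free, universal) multiplicative losses coming from perimeters $\le 4\delta$, from squaring $r_z$ when it appears quadratically (case (2a), where one applies the hyperbola trick twice), and from the universal constant $C$ in claim (C). The clean way to organize this is: show each output radius is $\ge c_{\mathrm{univ}}\cdot r_z^{2}\cdot\min(1,\operatorname{dist}(y,\text{vertex-lines}))\cdot\delta^{-1}$, then impose $\operatorname{dist}(y,\text{vertex-lines})\ge\exp(-4\gamma\delta^{-1})$ and $r_z\ge\exp(-\gamma\delta^{-1})$, so that the output radius is $\ge c_{\mathrm{univ}}\,\delta^{-1}\exp(-6\gamma\delta^{-1})\ge\exp(-40\gamma\delta^{-1})$ once $\delta$ is small enough that $c_{\mathrm{univ}}\delta^{-1}\ge\exp(-34\gamma\delta^{-1})$. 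Contrapositively, every $\exp(-40\gamma\delta^{-1})$-bad level lies in the union of the four bands of width $2\exp(-4\gamma\delta^{-1})$ around the vertex-lines, whose total Lebesgue measure is at most $8\exp(-4\gamma\delta^{-1})$; a slightly sharper accounting (using that only the two or three vertex-lines actually straddled by interior levels matter, and shrinking the band constant) yields the stated $4\exp(-4\gamma\delta^{-1})$.
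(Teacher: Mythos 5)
Your plan is essentially the paper's own proof: fix a level at vertical distance at least $\exp(-4\gamma\delta^{-1})$ from the (at most four) horizontal lines through the vertices of $z$, show via claims (B), (C) and (F) (area over half-perimeter, perimeter $\le 4\delta$, angle lower bounds, and the triangle-to-quad trick) that every quad output by the alignment of Lemma \ref{lem:horizontal_allignement} stays $\exp(-40\gamma\delta^{-1})$-fat, and conclude that bad levels are confined to thin bands around the vertex lines. One small caveat: your proposed uniform bound $c_{\mathrm{univ}}\, r_z^{2}\min(1,\operatorname{dist})\delta^{-1}$ is more optimistic than what the iterated case (2a) actually yields (the paper only gets $\exp(-40\gamma\delta^{-1})$ there, the losses compounding through two hyperbola modifications), but since the exponent $40$ is chosen precisely to absorb these finitely many polynomial losses, this does not affect the argument.
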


\begin{proof}
We work with $\delta$ chosen small enough. Using claim (C) on the  features of tangential quadrilateral, the angles $\phi_{v,z}$ are bounded from below by $\delta^{-1}\exp(-\gamma\delta^{-1}) \geq \exp(-\gamma \delta^{-1}) $, as $r_z \geq \exp(-\gamma\delta^{-1})$ and while edge-lengths of the boundary segments of $z$ are smaller than $\delta$. Fix an horizontal level $y$, at a vertical distance at least $\exp(-4\gamma\delta^{-1})$ from the (at most) $4$ axes containing the vertices of $z$ and perform an horizontal alignment at level $y$ following Lemma \ref{lem:horizontal_allignement}. We keep exactly the same notations and treat separately the four subcases presented there, depending on the number of vertices below level $y$. We still denote $\hat{z}$ the center of the tangential quadrilateral.

\underline{\textbf{(1) There is only one vertex of $z$ below the axis $y$}}

One claims that, provided $\delta $ is small enough, the area of the tangential quadrilaterals $(v^{\circ}_{0}v^{\bullet}_{1}v^{\circ}_{1}\tilde{v}^{\bullet})$ and $(v^{\circ}_{0}v^{\bullet}_{0}v^{\circ}_{1}\tilde{v}^{\bullet})$ can be bounded from below by the quantity \newline $\frac{1}{2}\exp(-\gamma\delta^{-1})\exp(-4\gamma\delta^{-1})\exp(-4\gamma\delta^{-1}) \geq \exp(-10\gamma\delta^{-1}) $. To do that, we are first lower bound the area the of tangential quadrilateral $(v^{\circ}_{0}v^{\bullet}_{0}v^{\circ}_{1}\tilde{v}^{\bullet})$, then lower bound one of its angles and derive the statement for $(v^{\circ}_{0}v^{\bullet}_{1}v^{\circ}_{1}\tilde{v}^{\bullet})$ and $(v^{\circ}_{0}v^{\bullet}_{0}v^{\circ}_{1}\tilde{v}^{\bullet})$. 

Consider e.g. the triangle (filled in blue in the left Figure \ref{fig:geometrical-claims}) denoted by \newline  $T_{(v^{\circ}_{0}v^{\bullet}_{0}v^{\circ}_{1}\tilde{v}^{\bullet})}(v^{\bullet}_{0},\exp(-4\gamma\delta^{-1}))= T(v^{\bullet}_{0},\exp(-4\gamma\delta^{-1})) \subseteq (v^{\circ}_{0}v^{\bullet}_{0}v^{\circ}_{1}\tilde{v}^{\bullet}) $, isoscele at $v^{\bullet}_{0} $ such that
\begin{itemize}
\item The symmetry axis of $T(v^{\bullet}_{0},\exp(-4\gamma\delta^{-1}))$ is the bisector of the angle $(v^{\circ}_{0} v_{0}^{\bullet} v_{1}^{\circ})$
\item Two sides of $T(v^{\bullet}_{0},\exp(-4\gamma\delta^{-1}))$ belong respectively to the segments $[v^{\circ}_{0} v_{0}^{\bullet}]$ and $[v^{\circ}_{1} v_{0}^{\bullet}]$.
\item The height of $T(v^{\bullet}_{0},\exp(-4\gamma\delta^{-1}))$ (along the bisector of the angle $(v^{\circ}_{0} v_{0}^{\bullet} v_{1}^{\circ})$) is $\exp(-4\gamma\delta^{-1})$.
\end{itemize}
Since the angle $\widehat{\phi_{v^{\bullet}_{0},z}}$ is bounded from below by $\exp(-\gamma\delta^{-1}) $, the area $A_{(v^{\circ}_{0}v^{\bullet}_{0}v^{\circ}_{1}\tilde{v}^{\bullet})}$ of $T(v^{\bullet}_{0},\exp(-4\gamma\delta^{-1}))$ and thus the area of $(v^{\circ}_{0}v^{\bullet}_{0}v^{\circ}_{1}\tilde{v}^{\bullet})$ is larger than $\exp(-10\gamma\delta^{-1})$. Moreover, the perimeter $\text{Per}_{(v^{\circ}_{0}v^{\bullet}_{0}v^{\circ}_{1}\tilde{v}^{\bullet})}$ of $(v^{\circ}_{0}v^{\bullet}_{0}v^{\circ}_{1}\tilde{v}^{\bullet})$ is at most $4\delta$ (it is a general fact that if a convex polygon lies inside another one, the perimeter of the outer one is larger than the interior one). 

\begin{enumerate}
\item $r_{(v^{\circ}_{0}v^{\bullet}_{0}v^{\circ}_{1}\tilde{v}^{\bullet})} = 2A_{(v^{\circ}_{0}v^{\bullet}_{0}v^{\circ}_{1}\tilde{v}^{\bullet})} \text{Per}_{(v^{\circ}_{0}v^{\bullet}_{0}v^{\circ}_{1}\tilde{v}^{\bullet})}^{-1} \geq \frac{1}{2} \exp(-10\gamma\delta^{-1}) \cdot \delta^{-1} \geq \exp(-10\gamma\delta^{-1})$ using claim (B) when $\delta$ is small enough.
\item The angles $\widehat{v^{\bullet}_{0} v^{\circ}_{0} \tilde{v}^{\bullet}} $ and $\widehat{v^{\bullet}_{0} v^{\circ}_{1} \tilde{v}^{\bullet}} $ are both bounded from below by the quantity $Cr_{(v^{\circ}_{0}v^{\bullet}_{0}v^{\circ}_{1}\tilde{v}^{\bullet})} \delta^{-1} \geq C\exp(-10\gamma\delta^{-1})\cdot \delta^{-1} \geq \exp(-10\gamma\delta^{-1}) $, where the absolute constant $C$ comes from claim (C) and $\delta$ is chosen small enough.
\item One can compute directly the area of the triangle $[v^{\bullet}_{0} v^{\circ}_{0} \tilde{v}^{\bullet}] $, which is exactly equal to $\frac{1}{2} \sin \widehat{v^{\bullet}_{0} v^{\circ}_{0} \tilde{v}^{\bullet}}~ |\tilde{v}^{\bullet}_{0}-v^{\circ}_{0}||\tilde{v}^{\bullet}-v^{\circ}_{0}|  \geq \frac{1}{4} \exp(-10\gamma \delta^{-1}) \cdot \exp(-4\gamma \delta^{-1}) \cdot \exp(-4\gamma \delta^{-1}) \geq \exp(-20\gamma \delta^{-1}) $ (as $\sin \widehat{v^{\bullet}_{0} v^{\circ}_{0} \tilde{v}^{\bullet}} \geq \frac{1}{2} \exp(-10\gamma \delta^{-1})$ and both distances $|\tilde{v}^{\bullet}_{0}-v^{\circ}_{0}|$ and $|\tilde{v}^{\bullet}_{0}-v^{\circ}_{0}||\tilde{v}^{\bullet}-v^{\circ}_{0}|$ are larger than $\exp(-4\gamma \delta^{-1})$). Since the perimeter of $[v^{\bullet}_{0} v^{\circ}_{0} \tilde{v}^{\bullet}] $ is smaller than $10\delta$, one gets the lower bound $ r_{v^{\bullet}_{0} v^{\circ}_{0}  \tilde{v}^{\bullet} \tilde{v}^{\circ}_{0,y}} \geq \exp(-20\gamma\delta^{-1})$, repeating exactly the area/perimeter argument given in step 1.
\end{enumerate}
The same result holds for the triangle  $(v^{\bullet}_{0} v^{\circ}_{1} \tilde{v}^{\bullet})$ viewed as a tangential quadrilateral.

\underline{\textbf{(2) There are two vertices of opposite color of $z$ below the axis $y$}}

Recall that the horizontal alignment is constructed by modifying along the appropriate hyperbola $(v^{\bullet}_{0}v^{\circ}_{1}v^{\bullet}_{1}v^{\circ}_{0}) $ into $(\tilde{v}^{\bullet}v^{\circ}_{1}v^{\bullet}_{1}v^{\circ}_{0}) $ and then modifying along the appropriate hyperbola $(\tilde{v}^{\bullet}v^{\circ}_{1}v^{\bullet}_{1}v^{\circ}_{0}) $ into $(\tilde{v}^{\bullet}v^{\circ}_{1}v^{\bullet}_{1}\tilde{v}^{\circ}) $.

Repeating the arguments of the case with one vertex below the axis $y$, one gets

\begin{itemize}
\item The angles $ \widehat{v^{\bullet}_0 v^{\circ}_{1}\tilde{v}^{\bullet}}$ and $ \widehat{\tilde{v}^{\circ}v^{\bullet}_{1}v^{\circ}_{0}} $ are larger than $\exp(-10\gamma\delta^{-1})$, as in (2). The radii of the tangential quadrilaterals associated to the triangles $(\tilde{v}^{\bullet}_{0}v^{\circ}_{1}\tilde{v}^{\bullet}) $ and  $(\tilde{v}^{\circ}_{0}v^{\bullet}_{1}\tilde{v}^{\circ}) $ are then larger than $\exp(-20\gamma\delta^{-1})$ as (3).

\item The tangential quadrilateral $(v^{\circ}_{0}\tilde{v}^{\bullet}v^{\circ}_{1}v^{\bullet}_{1}) $ has an area at least $\exp(-10\gamma \delta^{-1})$ which implies that both the angle $\widehat{\tilde{v}^{\bullet}v^{\circ}_{1}v^{\bullet}_{1}} $ and the radius $r_{v^{\circ}_{0}\tilde{v}^{\bullet}v^{\circ}_{1}v^{\bullet}_{1}} $ are larger than $\exp(-10\gamma\delta^{-1})$.
\item One can now consider the triangle $T_{(v^{\circ}_{0}\tilde{v}^{\bullet}v^{\circ}_{1}v^{\bullet}_{1})}(v^{\circ}_{1},\exp(-12\gamma\delta^{-1}))=$ \newline $T(v^{\circ}_{1},\exp(-12\gamma\delta^{-1}))$, isoscele at $v^{\circ}_{1}$ whose height along the bisector of $(v^{\circ}_{0}\tilde{v}^{\bullet}v^{\circ}_{1}v^{\bullet}_{1})$ is $\exp(-12\gamma\delta^{-1})$. This triangle is contained in $(v^{\circ}_{0}\tilde{v}^{\bullet}v^{\circ}_{1}v^{\bullet}_{1})$ and has an area area is $\frac{1}{2} \sin \widehat{\tilde{v}^{\bullet}v^{\circ}_{1}v^{\bullet}_{1}} \cdot \exp(-12\gamma\delta^{-1}) \cdot \exp(-12\gamma\delta^{-1}) \geq \exp(-40\gamma\delta^{-1})$. One can then conclude that $r_{v^{\circ}_{1}\tilde{v}^{\bullet}\tilde{v}^{\circ}_{1}v^{\bullet}_{1}} \geq \exp(-40\gamma\delta^{-1})$ as in (1).
\end{itemize}

\underline{\textbf{(3) Remaining cases}}
To handle the remaining cases (corresponding to 2b and 3 in Lemma \ref{lem:horizontal_allignement}) it is sufficient to note that in e.g. the case where three vertices of $z$ are below the axis $y$, the triangle $T_{(v^{\bullet}_{0}v^{\circ}_{0}v_{1}^{\bullet}v^{\circ}_{1})}(v^{\bullet}_{1},\exp(-4\gamma\delta^{-1}))=T(v^{\bullet}_{1},\exp(-4\gamma\delta^{-1}))$ is \emph{inside} the triangle $(\tilde{v}^{\circ}_{0} v^{\bullet}_{1} \tilde{v}^{\circ}_{0})$ has an area at least $\exp(-10\gamma\delta^{-1})$ as the angle of $z$ at $v^{\bullet}_{0}$ is bounded from below by $\exp(-\gamma\delta^{-1})$. In particular the tangential quadrilateral associated to the triangle $(\tilde{v}^{\circ}_{0} v^{\bullet}_{1} \tilde{v}^{\circ}_{0})$ has then a radius $r_z \geq \exp(-10\gamma\delta^{-1})$ as in (3).

All together, this proves that the one dimensional Lebesgue measure (in the vertical direction) of $\exp(-40\gamma\delta^{-1})$-bad levels intersecting $z$ is at most $4\exp(-4\gamma\delta^{-1})$. 

\end{proof}

Let us note that the same proof ensures that Proposition \ref{prop:bad-levels} also holds replaing the initial hypothesis $r_z \geq \exp( -\gamma\delta^{-1})$ by $r_z \geq \delta\exp( -\gamma\delta^{-1})$, provided $\gamma $ is fixed and $\delta $ is small enough.

\subsection{Construction of the special domain $\mathcal{R}^{\delta}_{\mathbf{ext}} $}

Fix a finite piece of an s-embedding satisfying \LipKd\, (for some $\kappa <1$) and \OneExpFat\ covering a square centered at the origin of the plane, denoted $\mathcal{C}_0 $, of width $40L_0 $ with $L_0 = \max(\frac{120000}{\gamma_0},120) $, where the constant $\gamma_0 $ comes from Theorem \ref{thm:F-via-HF} applied to the parameter $\kappa $. In what follows, the function $o_{\delta \to 0}(1)$ will denote a \emph{positive} function that converges to $0$ as $\delta $ goes to $0$ (and can in fact be made explicit). In the following construction, we work with a parameter $ \delta$ small enough, which we plan on using in a regime where it converges to $0$. We describe now step by step the construction of the discrete domain $\mathcal{R}^{\delta}_{\mathbf{ext}} $ we will work with, that will satisfy the assumption $\mathrm{Lip}(\kappa,5\delta)$ and have a fat-enough boundary. We advise the reader to follow the construction using Figure \ref{fig:extension_shape}.

\textbf{Step 1 : Extracting a boundary of fat-enough quads} 

Using \OneExpFat\, holds, all the vertex connected components of $\delta \exp(-\delta^{-1})$-fat quads are of diameter at most $1$. It is then immediate that there exist four sequences of neighboring tangential quadrilaterals, respectively denoted $\mathcal{N}_{\delta} $, $\mathcal{W}_{\delta} $ ,$\mathcal{S}_{\delta} $ and $\mathcal{E}_{\delta} $ (named following usual compass coordinates), such that:
\begin{itemize}
\item All the quads of $\mathcal{N}_{\delta} $, $\mathcal{W}_{\delta} $, $\mathcal{S}_{\delta} $ and $\mathcal{E}_{\delta} $ have a tangential circle whose radius is larger than $\delta \exp(-\delta^{-1}) $.
\item $\mathcal{N}_{\delta} $ is an injective sequence of neighboring tangential quads, linking inside $\mathcal{C}_0 $ the segments $ \{\pm 20L_0 \} \times [19L_0;20L_0] $ \emph{above} the horizontal level $y=+19L_0 $. It is drawn in green in Figure \ref{fig:extension_shape}.
\item $\mathcal{W}_{\delta} $ is an injective sequence of neighboring tangential quads, linking inside $\mathcal{C}_0 $ the segments $[-20L_0;-19L_0] \times  \{ \pm 20L_0 \}   $ \emph{on the left} the vertical level $x=-19L_0 $. It is drawn in pink in Figure \ref{fig:extension_shape}.
\item $\mathcal{S}_{\delta} $ is an injective sequence of neighboring tangential quads, linking inside $\mathcal{C}_0 $ the segments $ \{ \pm 20L_0 \} \times [-20L_0;-19L_0] $ \emph{below} the horizontal level $y=-19L_0 $. It is drawn in green in Figure \ref{fig:extension_shape}.
\item $\mathcal{E}_{\delta} $ is an injective sequence of neighboring tangential quads, linking inside $\mathcal{C}_0 $ the segments $[19L_0;20L_0] \times  \{ \pm 20L_0 \}   $ \emph{on the right} the vertical level $x=19L_0 $. It is drawn in pink in Figure \ref{fig:extension_shape}.
\end{itemize}

One can then easily construct a topological rectangle concatenating $\mathcal{N}_{\delta} $, $\mathcal{W}_{\delta} $ ,$\mathcal{S}_{\delta} $ and $\mathcal{E}_{\delta} $, whose boundary is only formed by $\delta \exp(-\delta^{-1})$-fat quads linking the four extremal squares of width $L_0 $ at the corners of $\mathcal{C}_0 $. From now on, we will still denote naturally $\mathcal{N}_{\delta} $, $\mathcal{W}_{\delta} $ ,$\mathcal{S}_{\delta} $ and $\mathcal{E}_{\delta} $ the boundary parts of the topological rectangle constructed via the concatenation of the four arcs. At the end of this first step, one has in particular two arcs, labeling them (starting from the bottom part of the graph) respectively $\mathcal{W}_{\delta} = (z^{w_\delta}_{k})_{1 \leq k \leq |\mathcal{W}_{\delta}|} $ and $\mathcal{E}_{\delta} = (z^{e_\delta}_{k})_{1 \leq k \leq |\mathcal{E}_{\delta}|}$. Those two sequences of quads are injective and satisfy $z^{w_\delta}_{k} \sim z^{w_\delta}_{k+1}$  and $z^{e_\delta}_{k} \sim z^{e_\delta}_{k+1}$ in $\diamondsuit(G)$. Both $\mathcal{W}_{\delta} $ and $\mathcal{E}_{\delta} $ are represented in pink in Figure \ref{fig:extension_shape}. The rest of the construction will be combination of symmetry arguments, horizontal alignments and explicit extensions.
  
\textbf{Step 2: Perform an horizontal alignment from above at the appropriate blue level}

The goal of this step is to find a vertical level to apply an horizontal alignment from above to \emph{all} tangential quadrilaterals that lie between $\mathcal{W}_{\delta} $ and $\mathcal{E}_{\delta} $ intersecting that level, while forcing that leftmost and rightmost created tangential quadrilateral (which are respectively obtained from alignment of quads of $\mathcal{W}_{\delta} $ and $\mathcal{E}_{\delta} $) to be fat enough. We explain now the construction in greater details.

There are at most $O( \delta^{-2} \exp( 2\delta^{-1}))$ tangential quadrilaterals in $\mathcal{C}_0$ with a radius $r_z \geq \exp (- \delta^{-1})$. In particular, there exist a horizontal level $y_{b}^{\delta}= -18L_0 + o_{\delta \rightarrow 0}(1) $ (drawn in blue in Figure \ref{fig:extension_shape}) which remains at a vertical distance at least $20\exp(-4\delta^{-1}) $ from \emph{all} vertices of $\Lambda(G) \cap \mathcal{C}_0$ belonging to one of the $\delta \exp(-\delta^{-1})$-fat tangential quads. Up to an arbitrary small vertical shift, we can also assume that $y_{b}^{\delta}$ doesn't intersect any vertex of $\Lambda(G)$ in $\mathcal{C}_0 $. Let $1\leq k_{\mathcal{W}_{\delta}} \leq |\mathcal{W}_{\delta}| $ be the index such that the tangential quadrilateral $z_{k_{\mathcal{W}_{\delta}}}$ intersects the vertical level $y_{b}^{\delta} $ and such that for any $l > k_{\mathcal{W}_{\delta}} $, all quads of $\mathcal{W}_{\delta} $ lie \emph{strictly above} the level $y_{b}^{\delta} $. In particular, $z_{k_{\mathcal{W}_{\delta}}}^{w_\delta}$ is the last quadrilateral of $\mathcal{W}_{\delta} $  that intersects $y_{b}^{\delta} $ (meaning that for all $l > k_{\mathcal{W}_{\delta}} $, the quad $z^{w_\delta}_{l}$ lies strictly above $y_{b}^{\delta}$). Define in a similar way the integer $k_{\mathcal{E}_{\delta}}$ such that  $z^{e_\delta}_{k_{\mathcal{E}_{\delta}}}$ is the last quadrilateral of $\mathcal{E}_{\delta} $ that intersects $y_{b}^{\delta}$. One then performs a horizontal alignment at level $y_{b}^{\delta}$  to \emph{all} tangential quadrilaterals that lie (horizontally) between $z^{w_\delta}_{k_{\mathcal{W}_{\delta}}}$ and $z^{e_\delta}_{k_{\mathcal{E}_{\delta}}}$ and intersect $y_{b}^{\delta}$. The new quads (and the associated vertices of $\Lambda(G)$) are drawn in blue solid. Due to Proposition \ref{prop:bad-levels}, after this horizontal alignment $z^{w_\delta}_{k_{\mathcal{W}_{\delta}}}$ and $z^{e_\delta}_{k_{\mathcal{E}_{\delta}}}$ are each transformed in at most $3$ tangential quadrilaterals with a radius $r_z \geq \exp(-40 \delta^{-1}) $. 

\textbf{Step 3 : Find the (dashed) red symmetrization level}

Recall that the leftmost and the rightmost tangential quadrilaterals attached to the level $y^{\delta}_{b}$ and between $\mathcal{E}_{\delta}$ and $\mathcal{W}_{\delta}$ have both a radius $r_z \geq \exp(-40 \delta^{-1}) $. The level $y^{\delta}_{r} = y^{\delta}_{b} + 10\exp(-160 \delta^{-1}) $ (again up to arbitrary small vertical shift if needed) can now be used to perform an horizontal alignment \emph{from above} at the level $y^{\delta}_{r}$ to all quadrilaterals intersecting $y^{\delta}_{r}$ (drawn in red dashes in Figure \ref{fig:extension_shape}). One obtains thanks to this procedure a 'strip' (filled in light pink) of height $\tilde{\delta} =  10\exp(-1600 \delta^{-1})$ between $\mathcal{W}_{\delta} $ and $\mathcal{E}_{\delta} $, formed by tangential quadrilaterals. Using again Proposition \ref{prop:bad-levels}, the leftmost and the rightmost tangential quadrilaterals of this 'strip' have a radius $r_z \geq \exp(-160 \delta^{-1}) $. Denote $S^{r}_{b}$ this strip of width $\tilde{\delta}$ and $S^{b}_{r}$ the symmetric picture with respect to $y^{\delta}_{b}$. Given that $\tilde{\delta} << \exp(-40 \delta^{-1})$, it is easy to check that there is indeed only one leftmost and one rightmost tangential quadrilaterals in the strip (meaning that there those extremal quads indeed connect the top to the bottom of the strip), as the alignment from above at level $y^{\delta}_{b}$ will corresponds this time to the case $3$ of the construction of Lemma \ref{lem:horizontal_allignement}.

\textbf{Step 4 : Perform the symmetrization}

We are now going to extend the picture obtained \emph{at the end of step 2} below the blue level $y^{\delta}_{b}$ by symmetrizing the strip between $y^{\delta}_{b}$ and $y^{\delta}_{r}$. In order to do that, paste (below $y^{\delta}_{b}$) alternatively the strips $S^{b}_{r}$ and $S^{r}_{b}$ until we reach the level $y=-40L_0+ o_{\delta \to 0}(1)$, starting with a strip of the type $S^{b}_{r}$ and finishing with a strip of the type $S^{r}_{b}$. Finally, paste below the lowest blue level of the strip region an horizontal symmetrization of the domain between $y_{\delta}^{b}$, the parts of $\mathcal{W}_{\delta} $,  $\mathcal{E}_{\delta} $ above $y_{\delta}^{b}$ and $\mathcal{N}_{\delta}$ (or more accurately for vertical arcs parts that correspond to indexes indexes larger than respectively $k_{\mathcal{W}_{\delta}}$ and $k_{\mathcal{E}_{\delta}}$). Those symmetrized sequence of quads are naturally denoted $\overline{\mathcal{W}_{\delta}} $,  $\overline{\mathcal{E}_{\delta}} $ (drawn in pink) and $\overline{\mathcal{N}_{\delta}} $ (drawn in green). Here the fact that all tangential quadrilaterals of the arcs $\mathcal{W}_{\delta} $ and $\mathcal{E}_{\delta} $ whose respective labels are larger than  $k_{\mathcal{W}_{\delta}} $ and $k_{\mathcal{E}_{\delta}} $ are above $y_{b}^{\delta} $ ensure that the obtained picture is proper as there is no overlap with the symmetrization. One can perform a similar construction in the upper part of the graph, swapping the role of the geographic poles in the notations.

\textbf{Step 5: Pasting squares via a layer a kites}

The strip region is a sequence of alternating strips $S^{b}_{r}$ and $S^{r}_{b}$, pasted below each other. In particular, since the construction is made using symmetries, the vertex boundary on the right of the strip region (in black solid in the bottom left Figure \ref{fig:extension_shape}) is delimited by a sequence of neighboring vertices of $\Lambda(G)$, labeled (up to swapping colors by) $ v^{\bullet}_{2k} \in G^{\bullet} $, $ v^{\circ}_{2k+1} \in G^{\circ}$, and  $v_{i} \sim v_{i+1} $ in $\Lambda(G)$. In our case, we label vertices in the natural order, from top to bottom. One can see that $\Re[v^{\bullet}_{2k} ]= \Re [v^{\bullet}_{2k+2}]$,  $\Re[v^{\circ}_{2k+1} ]= \Re [v^{\circ}_{2k+3}]$ and $\Im[v_{i}]-\Im[v_{i+2}] = 2\tilde{\delta} $. Let $x^{\delta}=\max (\Re [v^{\bullet}_{2}],\Re [v^{\circ}_{3}])$, which represents the rightmost point of the slit region. We treat the case where this rightmost point belongs to $G^{\bullet} $, a similar treatment can be easily done if it belongs to $G^{\circ} $. One can construct a vertical layer of \emph{kites} (filled in grey) formed by the points $v^{\bullet}_{2k}, v^{\circ}_{2k+1}, v^{\bullet}_{2k} $ and $\tilde{v}^{\circ}_{2k+1} = v^{\bullet}_{2k} - i\tilde{\delta} + \tilde{\delta}$. This kite has an area larger than the straight triangle  $(v^{\bullet}_{2k} v^{\bullet}_{2k+2}\tilde{v}^{\circ}_{2k+1})$ whose area is $\frac12\tilde{\delta}^2 $. In particular it is not hard to see that radius of the inscribed circle of one of those kites is at least $\tilde{\delta}^4$. Once this first layer of kites is constructed for the right part strip region, the right part of the vertex boundary of the strip region is now formed by a sequence of neighboring vertices in $\Lambda(G)$, with vertices of $G^{\bullet}$ vertically aligned, vertices of $G^{\circ}$ vertically aligned, and boundary edges (seen as vectors oriented from $G^{\bullet} $ to $G^{\circ} $) taking alternatively the values $\sqrt2 \tilde{\delta}e^{\pm i\frac{\pi}{4}} $. It is straightforward to extend this boundary into a region of the square lattice of edge length $\sqrt{2}\tilde{\delta}$ up to the vertical axis $x=40L_0+ o_{\delta \to 0}(1) $. We repeat the same construction in the upper left part of the tilling.

\textbf{Step 6: Checking the Lip condition}

One still needs to check that the constructed picture $\mathcal{R}^{\delta}_{\mathbf{ext}} $ satisfies $\mathrm{Lip}(\kappa,5\delta)$. When computing the increment of the origami map between two points of original part of the graph (before extension), it automatically holds, as well as in the square grid regions (where the origami map only takes, up to a global additive constant, the values $\sqrt{2}\tilde{\delta} $ or $0$). In the slit regions, the origami map has vertical increment (between two points vertically aligned) at most $2\tilde{\delta} << \delta $ while its horizontal increments corresponds are exactly the same as along $y^{\delta}_{b} $, which also satisfies $\mathrm{Lip}(\kappa,4\delta)$. Finally, when considering the increment of the origami map between points belonging to two different regions (original embedding, strip regions or square grid districts), it is enough to use the above observations together with the fact that the origami map is trivially $1$-Lipschitz for the tiny layers (of size much smaller than $\delta $) of transition between different regions, and thus the above results allow to conclude. 

\begin{figure}

\begin{minipage}{0.4\textwidth}
\hspace{-14mm}\includegraphics[clip, width=1.3\textwidth]{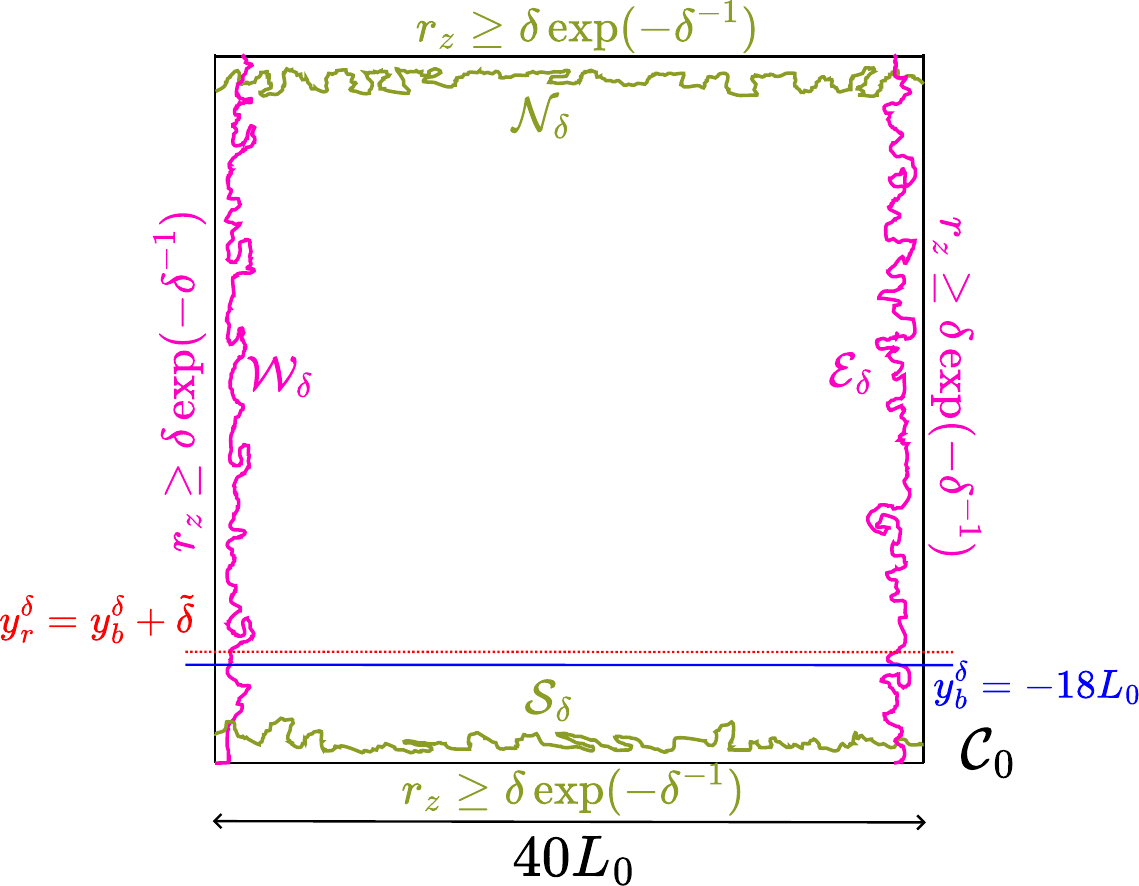}
\end{minipage}
\begin{minipage}{0.3\textwidth}
\includegraphics[clip, width=1.3\textwidth]{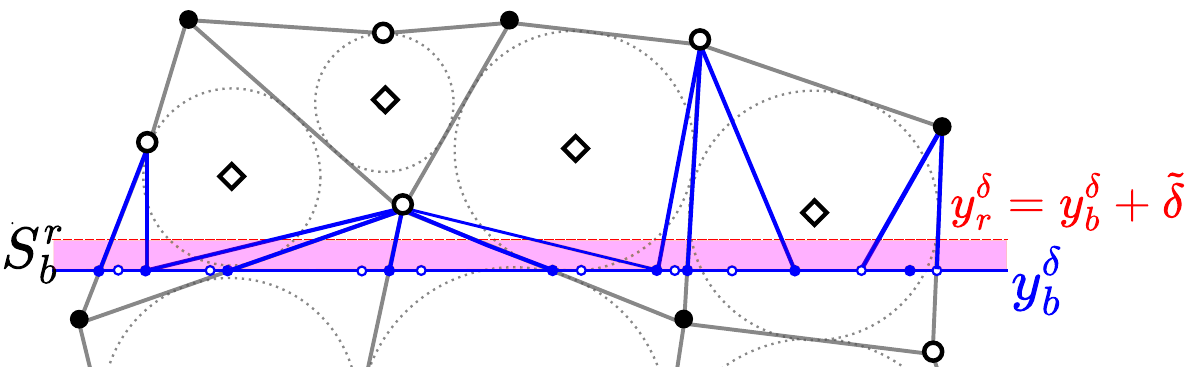}
\end{minipage}\hspace{8mm}
\begin{minipage}{0.20\textwidth}
\includegraphics[clip, width=1.3\textwidth]{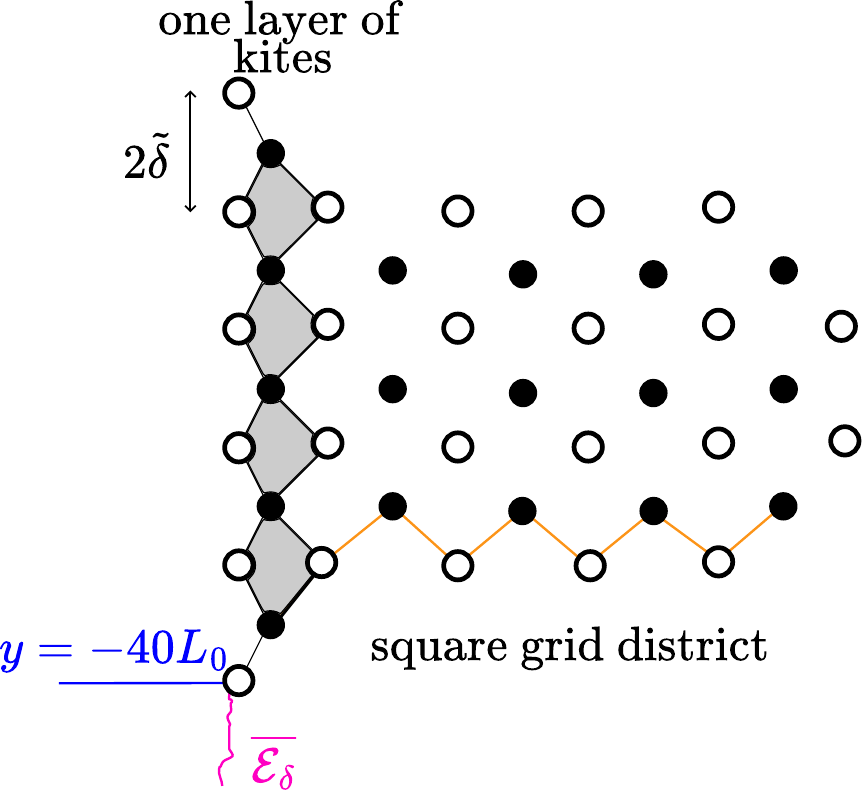}
\end{minipage}
\begin{minipage}{0.44\textwidth}
\includegraphics[clip, width=\textwidth]{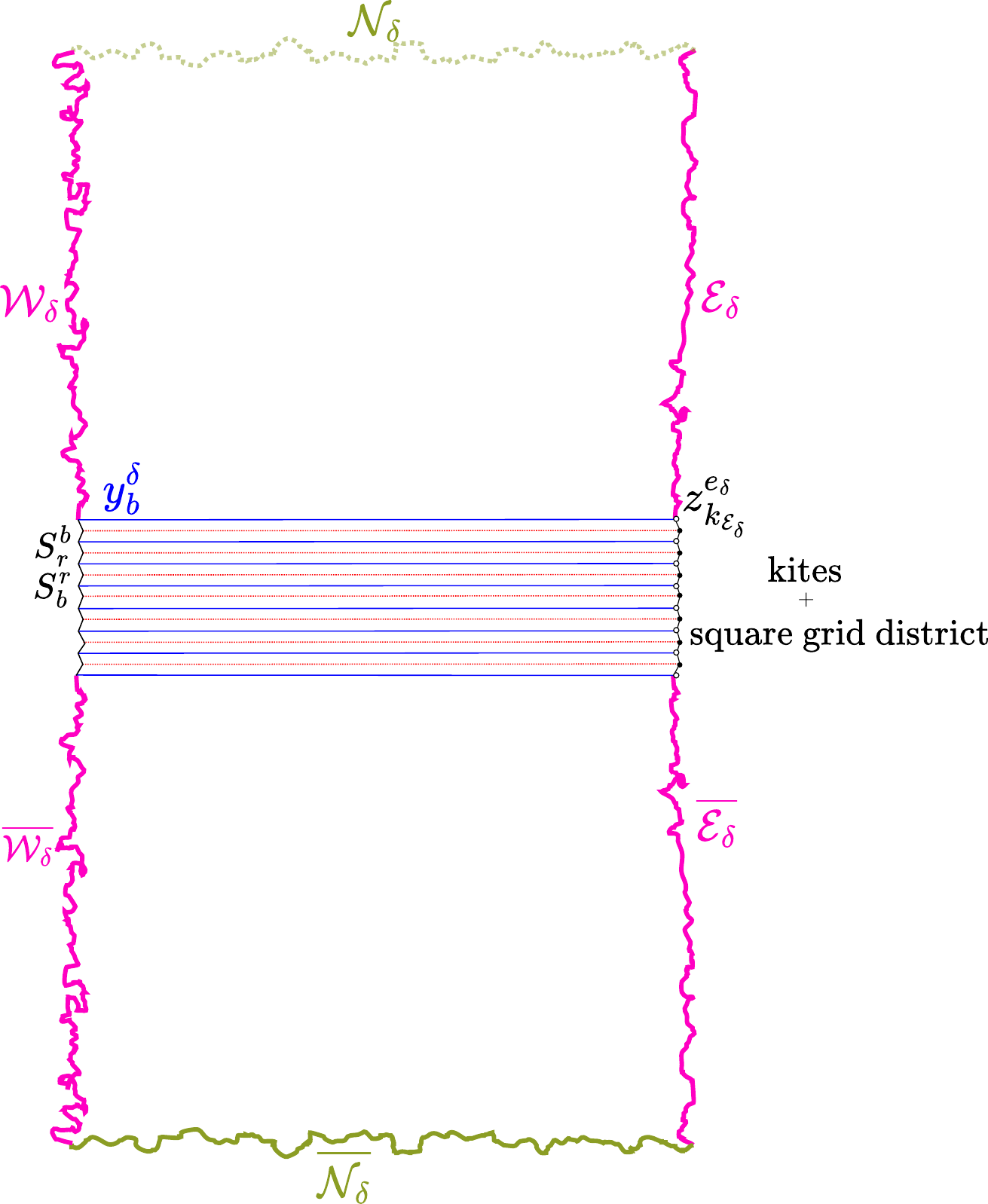}
\end{minipage}\hspace{5mm}
\begin{minipage}{0.44\textwidth}
\includegraphics[clip, width=\textwidth]{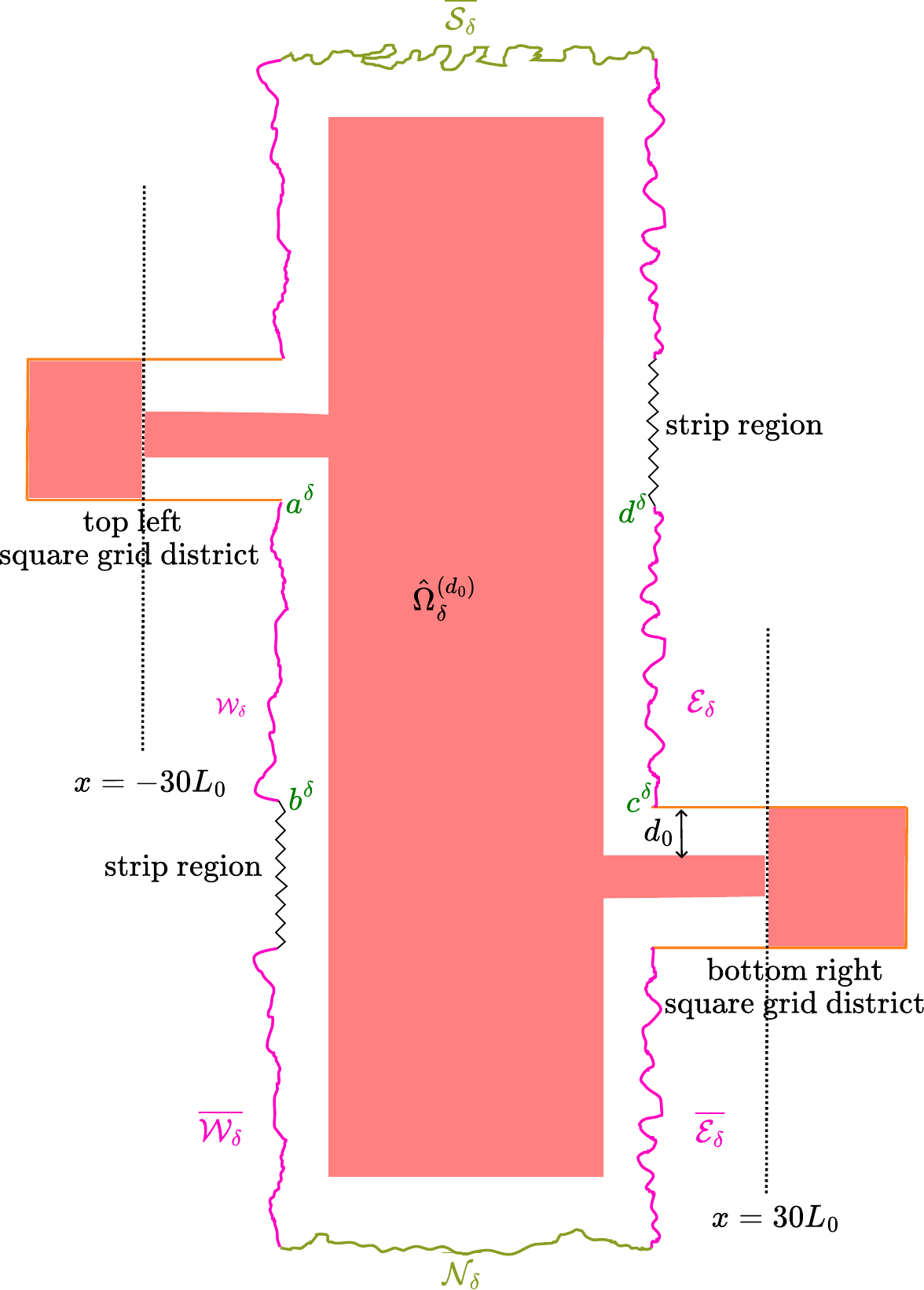}
\end{minipage}
\caption{TOP: (LEFT) The four cuts constructed in Step $1$ with $\delta \exp(\delta^{-1})$-fat quads can be concatenated to form a topological rectangle in $\mathcal{C}_0$. The blue level $y_{b}^{\delta}= -18L_0 + o_{\delta \rightarrow 0}(1) $ found in Step $2$ to perform an horizontal alignment from above. The level $y^{\delta}_{r} = y^{\delta}_{b} + \tilde{\delta} $ (red dashed) described in Step $3$ allows to construct the 'strip' $S^{r}_{b}$ (and its symmetric $S^{b}_{r}$). (MIDDLE) Horizontal alignment near $y_{b}^{\delta}$. The original embedding is in light solid while the new vertices and edges produced are drawn in blue. The strip $S^{r}_{b}$ is filled in light pink. The vertices of the horizontal alignment at level $y^{\delta}_{r}$ are not drawn. (RIGHT) Bottom boundary near the right square grid district at the end of Step $4$. The levels $y_{b}^{\delta}$ and $y_{r}^{\delta}$ ensure that one can construct a layer of fat-enough kites (filled in grey) and extend it to the square lattice. BOTTOM: (LEFT) Symmetrization performed in Step $4$, first by pasting below $y_{b}^{\delta}$ copies of the strips $S^{r}_{b}$ and $S^{b}_{r}$, then the arcs $\overline{\mathcal{W}_{\delta}} $  $\overline{\mathcal{E}_{\delta}} $ and $\overline{\mathcal{N}_{\delta}} $. (RIGHT) Description of the entire domain $\mathcal{R}^{\delta}_{\mathbf{ext}} $ with fat-enough boundary quads. The subdomain $\hat{\Omega}_{\delta}^{(d_0)}$ (filled in light red) connects the two square grid district. We impose that all vertices of $G^\circ$ on the bottom boundary of right square grid district are horizontally aligned.}\label{fig:extension_shape}
\end{figure}

We now set the boundary of $\mathcal{R}^{\delta}_{\mathbf{ext}} $ to be the domain whose boundary is formed by the part of $\mathcal{E}_{\delta}$ and $\mathcal{W}_{\delta}$ between the blue slicing levels, the symmetric (green dashes) of the arc formed by  $\mathcal{E}_{\delta}$, $\mathcal{N}_{\delta}$ $\mathcal{W}_{\delta}$ below the strip region, the left boundary of the strip region, the lower right square grid district constructed, and the similar construction performed in the upper part of the graph, forcing that \emph{all} boundary quadrilaterals of $\mathcal{R}^{\delta}_{\mathbf{ext}} $ have a radius $r_z \geq \exp(-10000 \delta^{-1})$. Moreover, one can easily impose that \emph{all} vertices of $G^\circ $ belonging to the bottom horizontal arc of the south left district are horizontally aligned (this will be relevant to obtain easily the boundary argument of discrete observables there). 
\medskip
One can now define a discrete simply connected domain $(\Omega_{\delta};a^{\delta},b^{\delta},c^{\delta},d^{\delta})$, seen as a discrete simply domain of an s-embedding with two wired boundary arcs $(b^{\delta}c^{\delta})^\circ$, $(d^{\delta}a^{\delta})^\circ$ and a two dual-wired boundary arcs $(c^{\delta}d^{\delta})^\bullet$ and $(a^{\delta}b^{\delta})^\bullet$ (see \cite[Section 6]{ChSmi2}, \cite[Figure 7]{Che20} or Figure \ref{fig:FK-domain}) using $\mathcal{R}^{\mathbf{ext}}_{\delta} $.  We started with injective sequence of fat-enough neighboring quads, which can be identified via a path $\ell^\delta $ in $\Lambda(G) $ such that all the vertices of that path only belong to quads with a large enough radius. Labeling the vertices of $\ell^\delta$  by $(v_{k})_{0\leq k \leq 2n} $ assuming e.g. that $v_{2i}=v_{2i}^{\circ} \in G^\circ$ and $v_{2i+1}=v_{2i+1}^{\bullet}  \in G^{\bullet}$, one gets for each $ v_{2i+1}^\bullet$ of $\ell^{\delta} $ sequence of counterclockwise ordered tangential quads between the vectors $\overrightarrow{ v_{2i+1}^\bullet v_{2i}^\circ} $ and $\overrightarrow{v_{2i+1}^\bullet v_{2i+2}^\circ} $ (and similarly for $ v_{2i}^\circ$). Concatenating them in the natural order  along the boundary of $\mathcal{R}^{\mathbf{ext}}_{\delta} $, one then gets a sequence of neighboring tangential quadrilaterals $(z_k)$ attached to the vertex boundary of $\mathcal{R}^{\mathbf{ext}}_{\delta} $. One can now apply a construction of the alternating Dobrushin arcs, similar to \cite[Figure 2]{ChSmi2} or \cite[Figure 7]{Che20}.  Each wired arc can be viewed as a sequence of \emph{boundary half quads} $(v_{2i}^{\circ} v_{2i+1}^{\bullet}  v_{2i+2}^{\circ} z_{i}) $ (see \cite[Section 5.3]{Che20}) with $z_i \in \diamondsuit(G)$ and all three vertices $v_{2i}^{\circ}, v_{2i+1}^{\bullet} , v_{2i+2}^{\circ} $ belonging to $z_i$. From a statistical mechanics perspectives, all the faces attached to that arc are wired, and all attached spins are in fact a single one. Similarly all disorders attached to a free arc are in fact a single disorder. The corners $a^{\delta},b^{\delta},c^{\delta},d^{\delta}$ correspond to edges of $\Lambda(G)$ linking the four arcs are located as in Figure \ref{fig:extension_shape}. In our case the two free arcs $(c^{\delta}d^{\delta})^\bullet$ and $(a^{\delta}b^{\delta})^\bullet$ correspond to the concatenation of the quads of $\mathcal{E}_{\delta}$ and $\mathcal{W}_{\delta}$ between the horizontal alignment levels, while the wired arcs $(b^{\delta}c^{\delta})^\circ$, $(d^{\delta}a^{\delta})^\circ$ correspond of quads in the remaining part of the boundary of $\mathcal{R}^{\mathbf{ext}}_{\delta} $.

\begin{figure}
\includegraphics[clip, scale=0.40]{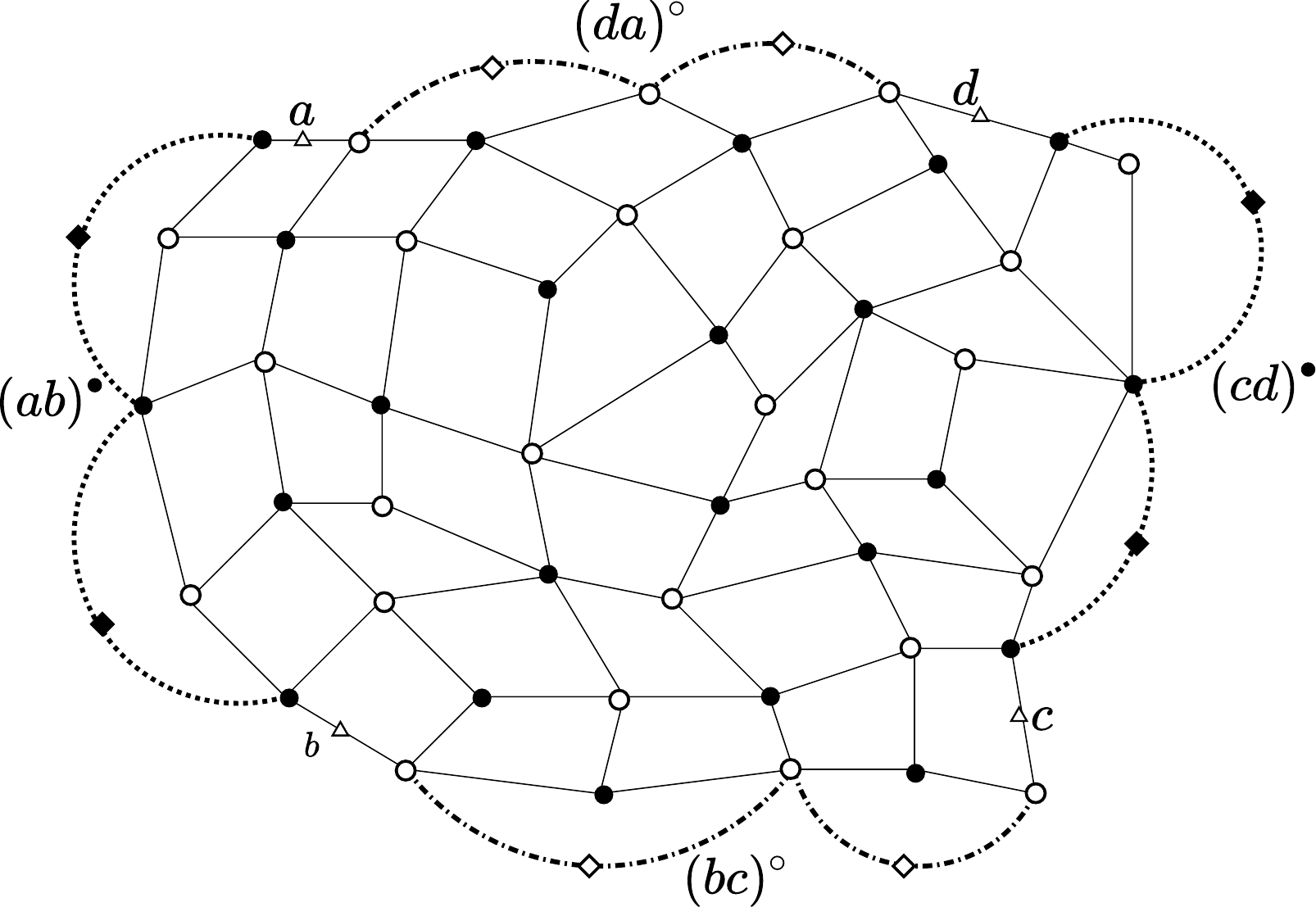}
\caption{ Domain $(\Omega_{\delta};a^{\delta},b^{\delta},c^{\delta},d^{\delta})$ seen as a discrete simply domain in $\cS^{\delta}$ with two wired boundary arcs $(b^{\delta}c^{\delta})^\circ$, $(d^{\delta}a^{\delta})^\circ$ (wired with irregular dashes) and a two dual wired boundary arcs $(c^{\delta}d^{\delta})^\bullet$ and $(a^{\delta}b^{\delta})^\bullet$ (wired with regular dashes). The wired boundary half quads are filled in white while the free boundary half quads are filled in black.}
\label{fig:FK-domain}
\end{figure}

\section{Proof of Theorem \ref{thm:positive-magnetization}}\label{sec:proof}

We now prove Theorem \ref{thm:positive-magnetization}. The proof of positive magnetization between opposite boundaries in the alternating wired/free/wired/free spin-Ising setup is done by contradiction, and we will need to rule out two different scenarios.

Using the monotonicity of the assumption \ExpFat\, in the variable $\rho$, one can restrict itself to the case $l=L_0 \rho $. Moreover, scaling the lattice (replacing $\delta $ by $\hat{\delta}= \delta \rho^{-1} $), one can assume that $\rho = 1 $. Assume now there exist a sequence of s-embeddings $(S^{\delta_k})_{k\geq 0} $ such that Theorem \ref{thm:positive-magnetization} fails. There exist two possible scenarios:
\begin{enumerate}[label=(\Alph*)]
\item Either $\hat{\delta}_k= \delta_k \rho_{k}^{-1}  \to 0$ as $k\rightarrow \infty $
\item Or  $\hat{\delta}_k= \delta_k \rho_{k}^{-1}$ is bounded from below along some subsequence as $k\rightarrow \infty $.

\end{enumerate} 

Let us note that the second scenario will be treated without using discrete complex analysis techniques and will be carried out at the end. In particular, the next subsection treats the case where  where $\hat{\delta_k} \rightarrow 0 $ and focuses on finding a contradiction between discrete observables and their continuous counterparts.

\subsection{Analytic proof by contradiction in the case (A)}

In this subsection and the rest of the proof, we keep assuming that $\rho = 1 $, and prove Theorem \ref{thm:positive-magnetization} first in the domain associated to $\mathcal{R}^{\delta}_{\mathbf{ext}} $ constructed in the previous section. In particular we keep from now on the notation $\delta $ instead of $\hat{\delta}$ to lighten notations, and this parameter will converge to $0$ along the given subsequence. Once the proof in is performed $\mathcal{R}^{\delta}_{\mathbf{ext}} $, standard monotonicity arguments (e.g. \cite[Theorem 7.6]{duminil-parafermions}) regarding the change of boundary conditions for the spin-Ising model allow to deduce the same statements for any discretization of a usual rectangle in $\cS^\delta $. Recall that one of the important features of this extended domain $\mathcal{R}^{\delta}_{\mathbf{ext}} $ is that its boundary contains two macroscopic pieces of the square lattice, which are called the square grid districts. The special attention made in the construction to have fat-enough boundary quads in $\mathcal{R}^{\delta}_{\mathbf{ext}} $ is crucial to ensure the boundedness (and thus precompactness)  of a sequence of s-holomorphic functions $F^{\delta}$, associated to 4-points Kadanoff-Ceva correlators, at least in a suitable open subset. The associated functions $H^{\delta} $ converge in their turn to $h=\int \Im [f^2 dz]+ |f|^2 d\vartheta $ in same region. The contradiction will come from the 'sign of the outer derivative' of $h$ (by this we mean whether $h$ tends to grow or decay near one of its boundary arcs). The introduction of the pieces of the square lattice allows to use the technology developed \cite{ChSmi2} to ensure that discrete Dirichlet boundary conditions of the function $H^\delta$ survive when passing to the continuous limit, at least in the square grid districts. This allows to bypass the use of
special cuts introduced \cite[Section 5]{Che20} to control the boundary behavior of $H^\delta$. Let us point out that, instead of the partial rewiring procedure between the wired arcs introduced in \cite[Equations (6.3) and (6.4)]{ChSmi2}, we present here a more transparent derivation. Still,  the rewiring procedure remains useful if one wants to prove convergence (and not only a lower bound) of correlation in the same 4 points setup.

Let $(\Omega_{\delta};a^{\delta},b^{\delta},c^{\delta},d^{\delta})$ be a discrete simply connected domain of an \mbox{s-embedding} $\cS^{\delta}$, with two \emph{wired} boundary arcs $(b^{\delta}c^{\delta})^\circ$, $(d^{\delta}a^{\delta})^\circ$ and a two \emph{dual-wired} boundary arcs $(c^{\delta}d^{\delta})^\bullet$ and $(a^{\delta}b^{\delta})^\bullet$ (see \cite[Section 6]{ChSmi2} ,\cite[Figure 7]{Che20} or Figure \ref{fig:FK-domain}). Here $a^{\delta},b^{\delta},c^{\delta},d^{\delta}$ are corners, separating the extremities of these four arcs and correspond to the places where the discrete function $H^\delta $ defined below jumps (it is constant along each of the four arcs). One can now define the Kadanoff-Ceva four points observable by setting $X^{\delta}(\cdot):=\E_{\Omega_\delta}[\chi_{(\cdot)} \mu_{(c^{\delta}d^{\delta})^\bullet}\sigma_{(d^{\delta}a^{\delta})^\circ}]$ via \eqref{eq:KC-fermions}. It is easy to see that 
\[ \begin{array}{ll}
 X^{\delta}(a^{\delta}) = \pm \mathbb{E}_{\Omega_\delta}[\mu_{(a^{\delta}b^{\delta})^{\bullet}}\mu_{(c^{\delta}d^{\delta})^{\bullet}}],  &   X^{\delta}(d^{\delta}) =\pm 1, \\
 X^{\delta}(b^{\delta})=\pm \mathbb{E}_{\Omega_\delta}[\mu_{(a^{\delta}b^{\delta})^{\bullet}}\mu_{(c^{\delta}d^{\delta})^{\bullet}}\sigma_{(b^{\delta}c^{\delta})^\circ}\sigma_{(d^{\delta}a^{\delta})^\circ}], & X^{\delta}(c^\delta)=\pm\E_{\Omega_\delta}[\sigma_{(b^{\delta}c^{\delta})^\circ}\sigma_{(d^{\delta}a^{\delta})^\circ}]. \ 
 \end{array}\]
Choosing properly the global additive constant in the definition of $H_{X^{\delta}}$ associated to $X^{\delta}$ via \eqref{eq:HX-def}, one has
\[ \begin{array}{ll}\label{eq:HF4-bc'}
 H_{X^{\delta}}((c^{\delta} d^{\delta})^\bullet)=1,  &   H_{X^{\delta}}((d^{\delta} a^{\delta})^\circ)=0, \\
 H_{X^{\delta}}((a^{\delta} b^{\delta})^\bullet)=1- \mathbb{E}[\mu_{(a^{\delta}b^{\delta})^{\bullet}}\mu_{(c^{\delta}d^{\delta})^{\bullet}}]^2, &   H_{X^{\delta}}((b^{\delta} c^{\delta})^\circ)=1- \E[\sigma_{(b^{\delta}c^{\delta})^\circ}\sigma_{(d^{\delta}a^{\delta})^\circ}]^2. \ 
 \end{array}\]
\bigskip

We focus on the specific situation where $(\Omega_{\delta};a^{\delta},b^{\delta},c^{\delta},d^{\delta})$ is the topological rectangle $\mathcal{R}^{\mathbf{ext}}_{\delta} $ constructed in Section \ref{sec:geometry} equipped with the Ising model. Up to passing to another subsequence, one can also assume that $(\cQ^\delta)_{\delta >0 }$ converges uniformly to a $\kappa <1 $ Lipschitz function $\vartheta $ on compacts subsets of the box $\mathcal{C}_0$ (since the functions $\cQ^\delta $ are automatically $1$-Lipschitz and defined up to additive constant). One denotes by $F^{\delta}$ and $H^{\delta}= H_{F^\delta}=H_{X^{\delta}}$ the functions naturally associated to the correlator $X^{\delta}$ define above via \eqref{eq:F-from-X} and \eqref{eq:HX-def}. Using the maximum principle for $H^{\delta}$ coming from Proposition~\ref{prop:HF-comparison}, one directly deduces from the boundary conditions \eqref{eq:HF4-bc'} that the functions $H_{F^\delta}$ are uniformly bounded by $1$ on $\Omega_\delta$. Recall that all the boundary faces of $\mathcal{R}^{\mathbf{ext}}_{\delta} $ have by construction a radius $r_z \geq \delta\exp (-\delta^{-1})$. Up to passing to another sub-sequence, one can also assume that the sequence of domains $(\Omega_\delta)_{\delta >0}$ converges to a domain $\hat{\Omega}$ (for the usual Hausdorff topology on bounded planar domains), whose discrete counterparts contain the regions corresponding the square grid district on the left of the axis $x=25L_0 $ and on the right of the axis $x=-25L_0$. Let $\hat{\Omega}_{\delta}^{(d_0)}$ the be union of the $d_0$-interior of $\Omega_\delta$ and the two pieces of the square grid district that lie respectively on the right of the axis $x=30L_0 $ and on the left of the axis $x=-30L_0 $, and $\hat{\Omega}^{(d_0)}$ its naturally defined counterpart. The next proposition is crucial as it gives precompactness of the 4-points observables in $\hat{\Omega}^{(d_0)}$ (that is naturally defined as in the discrete setting).

\begin{prop}\label{prop:comapctness}
Let $d_0>0 $ such that $d_0\gamma_0=120000$. The provided $\delta$ is small enough, the four points observables $F^{\delta}$ are bounded on compacts of $\hat{\Omega}^{(d_0)}$. In particular, the functions $F^{\delta}$ form a precompact family in the topology of uniform convergence on compacts of $\hat{\Omega}^{(d_0)}$.
\end{prop}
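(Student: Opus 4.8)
The plan is to run the dichotomy of Theorem~\ref{thm:F-via-HF} on balls of macroscopic radius and to discard its exponential blow-up branch using the fat boundary quadrilaterals that were built into $\mathcal{R}^\delta_{\mathbf{ext}}$ in Section~\ref{sec:geometry} (this is the strategy announced in Remark~\ref{rem:regularity}). The starting point is the a priori bound $0\le H^\delta\le 1$ on $\Omega_\delta$: by the boundary values of $H_{X^\delta}$ on the four arcs it lies in $[0,1]$ along $\partial\Omega_\delta$, and since constant spinors solve \eqref{eq:3-terms}, applying Proposition~\ref{prop:HF-comparison} to $H_{X^\delta}$ and to $H_{X^\delta}-1$ rules out any interior value outside $[0,1]$. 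In particular $\osc_B H^\delta\le 1$ for every ball $B$.

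Next, fix a compact $K\subset\hat\Omega^{(d_0)}$ and set $\varepsilon:=\dist(K,\partial\hat\Omega^{(d_0)})>0$; for $\delta$ small every $z$ with $\cS^\delta(z)\in K$ sits inside $\hat\Omega^{(d_0)}_\delta$ at distance at least $d_0$ from $\partial\Omega_\delta$, so the ball $B(\cS^\delta(z),d_0)$ is drawn over $\cS^\delta$, contained in $\Omega_\delta$, and $d_0\ge\cst(\kappa)\cdot\delta$. Theorem~\ref{thm:F-via-HF} then gives, for each such $z$, either
\[
|F^\delta(z)|^2\ \le\ C_0 d_0^{-1}\cdot\osc_{B(\cS^\delta(z),d_0)}H^\delta\ \le\ C_0 d_0^{-1},
\]
which is exactly the desired uniform bound, or the existence of $z_0$ with $\cS^\delta(z_0)\in B(\cS^\delta(z),\tfrac34 d_0)$ and
\[
|F^\delta(z_0)|^2\ \ge\ \exp(\gamma_0 d_0\delta^{-1})\cdot C_0 d_0^{-1}\cdot\osc_{B(\cS^\delta(z),d_0)}H^\delta\ =\ \exp(120000\,\delta^{-1})\cdot C_0 d_0^{-1}\cdot\osc H^\delta,
\]
using $d_0\gamma_0=120000$. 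The whole content of the proposition is therefore to exclude this second branch for $\delta$ small.

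To do so I would exploit two structural features of $\mathcal{R}^\delta_{\mathbf{ext}}$: all its boundary quads are $\exp(-10000\,\delta^{-1})$-fat, and two macroscopic boundary arcs are exact pieces of the square lattice, on which (being \Unif) only the first branch of Theorem~\ref{thm:F-via-HF} can ever occur. On a $\beta$-fat quad $z$, claims~(C)--(D) give $\phi_{v,z}\ge C\beta\delta^{-1}$, hence the corner values $\cX(c_{01}(z)),\cX(c_{10}(z))$ are quantitatively transverse, and then \eqref{eq:F-from-X} together with \eqref{eq:HX-def} yields the reverse estimate $|F^\delta(z)|^2\le C(\kappa)\,\delta\,r_z^{-2}\,\osc_z H^\delta\le C(\kappa)\,\delta\,r_z^{-2}$. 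Now suppose the blow-up branch held at some $z_0$. Using \OneExpFat\ (non-fat vertex-connected components have diameter $\le 1\ll d_0$) one joins $z_0$ to a boundary quad, and to a square-lattice quad, through a chain of $\delta\exp(-\delta^{-1})$-fat quads staying at macroscopic distance from $z_0$; iterating the small-scale ($r=\cst(\kappa)\delta$) alternative of Theorem~\ref{thm:F-via-HF} along this chain propagates an exponential lower bound on $|F^\delta|^2$ with a controlled multiplicative loss. Comparing the propagated bound with the reverse estimate at a fat boundary quad ($r_z\ge\exp(-10000\,\delta^{-1})$) and with the square-lattice branch — the margin being precisely what the calibration $120000=12\cdot10000$ provides — forces $\osc_{B(\cS^\delta(z),d_0)}H^\delta$ to be super-exponentially small; fed back into the displayed blow-up inequality this bounds $|F^\delta(z_0)|^2$ polynomially in $\delta^{-1}$, contradicting its exponential lower bound. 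I expect this exclusion step to be the genuine obstacle: it is exactly the role of the whole Section~\ref{sec:geometry} construction and replaces the special cuts of \cite[Section~5]{Che20}.

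Finally, with $\sup_\delta\sup_{\cS^\delta(z)\in K}|F^\delta(z)|<\infty$ for every compact $K\subset\hat\Omega^{(d_0)}$, the H\"older part of \cite[Theorem~2.18]{Che20} recalled in Remark~\ref{rem:regularity} gives a uniform $\beta(\kappa)$-H\"older modulus of continuity for $F^\delta$ above scale $\cst(\kappa)\delta$ on any slightly smaller compact, and Arzel\`a--Ascoli yields the asserted precompactness in the topology of uniform convergence on compacts of $\hat\Omega^{(d_0)}$.
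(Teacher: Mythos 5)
Your overall skeleton (the a priori bound $0\le H^\delta\le 1$ via Proposition \ref{prop:HF-comparison}, the dichotomy of Theorem \ref{thm:F-via-HF}, fat boundary quads to kill the blow-up branch, separate mention of the square-grid districts, then H\"older regularity and Arzel\`a--Ascoli) matches the paper, but the step that actually excludes the blow-up branch --- which is the whole content of the proposition --- does not work as you describe it. You propose to join $z_0$ to a fat boundary quad through a chain of $\delta\exp(-\delta^{-1})$-fat quads and to ``iterate the small-scale alternative of Theorem \ref{thm:F-via-HF} along this chain, propagating an exponential lower bound with a controlled multiplicative loss''. The dichotomy does not propagate pointwise lower bounds on $|F^\delta|^2$ along chains (its second branch only bounds a maximum over a ball from below in terms of $\osc H$), and even granting some per-step comparison, a chain from the bulk to the boundary has length of order $\delta^{-1}$, so a constant-factor loss per step is already $\exp(\cst\cdot\delta^{-1})$ --- exactly the size of the margins you want to exploit; nothing in the asserted calibration ``$120000=12\cdot10000$'' controls this. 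The conclusion is also logically circular: once you force $\osc_{B(\cS^\delta(z),d_0)}H^\delta$ to be super-exponentially small, ``feeding it back'' into the blow-up inequality only degrades that lower bound (its right-hand side is proportional to the oscillation), so no contradiction remains, and you are still left without any upper bound on $|F^\delta|$ at the point $z\in K$ you started from.

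The paper closes this step with a tool you never invoke: the maximum principle for s-holomorphic functions (\cite[Remark 2.9]{Che20}), combined with a restatement of Theorem \ref{thm:F-via-HF} in which the oscillation is taken over the whole domain $D^\delta=\mathcal{R}^{\mathbf{ext}}_\delta$ rather than over the ball. Setting $M^\delta:=\osc_{\mathcal{R}^{\mathbf{ext}}_\delta}H_{F^\delta}$, the crude estimate from \eqref{eq:F-from-X} at the $\exp(-10000\delta^{-1})$-fat boundary quads gives $|F^\delta|^2\le M^\delta r_z^{-2}\le M^\delta\exp(20000\delta^{-1})$ there, and the maximum principle transfers this bound, with no loss and no chain argument, to every quad of the domain; since $\mathcal{R}^{\mathbf{ext}}_\delta$ satisfies $\mathrm{Lip}(\kappa,5\delta)$ and $d_0\gamma_0/5=24000>20000$, the second alternative (whose right-hand side carries the same $M^\delta$, which therefore cancels) is impossible for small $\delta$, and the first alternative yields boundedness in the $d_0$-interior. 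A secondary flaw in your setup: points of a compact $K\subset\hat\Omega^{(d_0)}$ lying in the square-grid districts need not be at distance $d_0$ from $\partial\Omega_\delta$, so your ball $B(\cS^\delta(z),d_0)$ need not be contained in $\Omega_\delta$ there; the paper treats these regions separately at the mesh scale $\tilde\delta$, where the origami map is $\tfrac12$-Lipschitz and an \ExpFat-type condition holds, so that \cite[Corollary 2.20]{Che20} applies directly.
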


\begin{proof}
We work in the regime $\delta \to 0 $, assuming $d_0$ is fixed and $\delta $ small enough. One can easily, with exactly the same proof as given in \cite[Theorem 2,18]{Che20} that the alternative of Theorem \ref{thm:F-via-HF} can be rewritten for any s-embedding covering a discrete simply connected domain $D^\delta $ that contains strictly the ball $B(u,0.80r)$ (up to slightly modifying the values of the constants $r_0$ and $\gamma_0$, which we assume to have been done already when formulating the statement of Theorem \ref{thm:positive-magnetization})
\[
\begin{array}{rcl}
\text{either}\ \max_{\{z:\cS(z)\in B(u,\frac{1}{2}r)\}}|F|^2&\le& C_0r^{-1}\cdot\osc_{\{v:\cS(v)\in D^\delta\}}H_F\\[4pt]
\text{or}\ \max_{\{z:\cS(z)\in B(u,\frac{3}{4}r)\}}|F|^2&\ge& \exp(\gamma_0r\delta^{-1})\cdot C_0r^{-1}\osc_{\{v:\cS(v)\in D^\delta\}}H_F
\end{array}
\]

Fix now a point $u$ belonging to the $d_0$ interior of $\mathcal{R}^{\mathbf{ext}}_{\delta} $. We are going to rule out the second alternative presented above for the domain $D^\delta = \mathcal{R}^{\mathbf{ext}}_{\delta} $ (which satisfies $\textrm{Lip}(\kappa,5\delta)$) using the fat-enough boundary quads. Denote by $M^\delta=\osc_{\{ \mathcal{R}^{\mathbf{ext}}_{\delta} \}}H_F $. The first observation is that at \emph{boundary} quads that are all $\exp(-10000\delta^{-1})$-fat, we have the crude estimate $|F^{\delta}|^2 \leq M^\delta r_{z}^{-2} \leq M^\delta \exp(20000\delta^{-1})$ coming from \eqref{eq:F-from-X}. Applying the maximum principle for s-holomorphic functions (see \cite[Remark 2.9]{Che20}), this implies the same estimate for all quads inside the domain $\mathcal{R}^{\mathbf{ext}}_{\delta}$. In particular, as $ \frac{d_0 \gamma_0}{5}>20000$, this rules automatically out the possibility of the second alternative to hold (and in fact justifies the particular choice of the value of $d_0$). Indeed, either way the maximum principle for observables would imply that 
\begin{equation}\nonumber
 M^\delta \exp(20000\delta^{-1}) \geq  \max_{\{z:\cS(z)\in B(u,\frac{3}{4}d_0)\}}|F|^2 \geq M^\delta \exp(\gamma_0r\delta^{-1})\cdot C_0r^{-1},
\end{equation}
which fails as $\delta \to 0$. In particular, in the $d_0 $ interior of $\mathcal{R}^{\mathbf{ext}}_{\delta} $, the first alternative of the modification of Theorem \ref{thm:F-via-HF} has to hold and the observables $F^{\delta}$ thus bounded in that region (as $M^\delta =1 $ in the present case). Let us now treat the case of the square grid districts (of mesh size $O(\tilde{\delta})$). There, the origami map is $\frac12$-Lipchitz starting at a scale comparable to $\tilde{\delta}$. In particular, one can see that the second alternative of Theorem \ref{thm:F-via-HF} cannot hold as in those region, the assumption $\textsc{ExpFat}(\tilde{\delta},\tilde{\delta}^\frac12) $ holds and falls directly in the context of \cite[Corollary 2.20]{Che20}).
Once boundedness on compacts of $\hat{\Omega}^{(d_0)}$, one can apply directly \cite[Remark 2.12]{Che20} to get the precompactness statement.

\end{proof}
The preceding proposition generalizes \cite[Corollary]{Che20} using simply by the existence of fat-enough boundary quads, at the cost of achieving boundedness (and thus compactness) only at a fixed distance from the boundary. Let us mention that $\hat{\Omega}^{(d_0)}$ is connected, linking the two square grid districts. The previous proposition now ensures that the observables $F^{\delta}$ form a precompact family in $\hat{\Omega}^{(d_0)}$ thus there exist a subsequential limit of the family $(F^{\delta})_{\delta >0} $ such that, uniformly on compacts of $\hat{\Omega}^{(d_0)}$,
\begin{equation}\nonumber
F^\delta\to f,\quad H_{F^\delta}\to {\textstyle h=\int\Im[(f(z))^2dz] + |f(z)|^{2}d\vartheta}. 
\end{equation}

It is clear from the discrete estimate $0 \leq H^\delta \leq 1$ that $h$ takes its values in $[0,1] $. In general $d\vartheta \neq 0 $, and $f$ is not holomorphic. Still, the function $f$ \emph{is} holomorphic inside in the two square grid districts,  as $\vartheta $ is constant there. This also implies that $h=\int\Im[(f(z))^2dz]$ in that region is a harmonic function in the two square grid districts. Assuming $\E_{\Omega_\delta}[\sigma_{(b^\delta c^\delta)^\circ}\sigma_{(d^\delta a^\delta)^\circ}]\to 0$ as $\delta\to 0$ along one subsequence, we have
  $H_{X^{\delta}}((b^{\delta} c^{\delta})^\circ)=1- \E_{\Omega_\delta}[\sigma_{(b^{\delta}c^{\delta})^\circ}\sigma_{(d^{\delta}a^{\delta})^\circ}]^2 \to 1$ as $\delta $ goes to $0$. The contradiction is obtained in three steps:
\begin{itemize}
\item We show that $h$ has Dirichlet boundary values $0$ in a piece of the arc of the top left region and has Dirichlet boundary values $1$ in a piece of the bottom right region. This requires to show that discrete Dirichlet boundary conditions for the function $H^\delta $ survive when passing to the continuous limit.
\item The \emph{sign of the outer derivative} of the functions $H^\delta$ depends on the type of the arc (i.e. wired or free), at least in the square grid regions. In particular, discrete functions $H^\delta $ tend to grow near wired arcs while the continuous functions $h$ has to decay near that same arc by the maximum principle (recall that $h\in [0,1]$ thus $h$ cannot exceed $1$ near that part of the boundary). At the level of observables, this will translate in the fact that functions $F^\delta$ are purely real at the discrete bottom boundary of the southern district while continuous function $f$ is purely imaginary at that boundary. This leaves the only option that $f$ vanishes identically at that arc and thus in the entire southern square grid district. 

\item The Sloïlov factorization for $I_{\mathbb{C}}[f] $ implies that the function $f$ should actually vanish everywhere in $\hat{\Omega}^{(d_0)}$, contradicting the change of boundary values of $h$ between different districts.
\end{itemize}

The author is grateful to Mikhail Basok for pointing out the Stoïlov factorization allowing to obtain the final contradiction.

\medskip
\underline{\textbf{Step 1: Boundary behavior of the continuous functions $f$ and $h$}}

The proof can be followed with Figure \ref{fig:extension_shape}. Recall that we assume that the correlation $\E_{\Omega_\delta}[\sigma_{(b^{\delta}c^{\delta})^\circ}\sigma_{(d^{\delta}a^{\delta})^\circ}]^2$ vanishes in the limit along one subsequence as $\delta \rightarrow 0 $, thus $H_{X^{\delta}}((b^{\delta} c^{\delta})^\circ)= 1- \E_{\Omega_\delta}[\sigma_{(b^{\delta}c^{\delta})^\circ}\sigma_{(d^{\delta}a^{\delta})^\circ}]^2\rightarrow 1$ at the wired arc $(b^{\delta}c^{\delta})$. This statement remains in particular true at the horizontal arcs of the bottom right square grid district. \emph{In the square grid districts and a priori only there}, one can apply \cite[Proposition 3.6]{ChSmi2} and deduce directly that
\begin{itemize}
\item The function $H_{F^{\delta}}$ is \emph{sub-harmonic} on $G^{\circ}$ for the natural Laplacian on $G^{\circ}$ defined in \cite[Equation (3.1)]{ChSmi2}.
\item The function $H_{F^{\delta}}$ is \emph{super-harmonic} on $G^{\bullet}$ for the natural Laplacian on $G^{\bullet}$ defined in \cite[Equation (3.1)]{ChSmi2}.
\end{itemize}
Using the \emph{boundary modification trick} of \cite[Lemma 3.14, Remark 3.15]{ChSmi2} to compare $H^{\delta}$ with discrete harmonic functions proves, exactly as in \cite[Theorem 4.3]{ChSmi2}) that discrete Dirichlet boundary conditions survive when passing to continuum i.e. $h$ extends continuously to $1$ at the horizontal arc of the bottom right square grid district. In continuum, the constant Dirichlet boundary conditions of $h$ at the bottom horizontal segment of the square grid district (drawn in orange in Figure \ref{fig:extension_shape}) and the fact that $0\leq h\leq 1$ imply that $f$ extends continuously up to the bottom boundary of the square grid district. Moreover, $f^2\in \mathbb{R}^{-}$ near that arc, i.e. $f\in i\mathbb{R}$, as in \cite[Proof of Theorem 1.3]{Che20}. One can also note that a similar reasoning on survival of Dirichlet boundary conditions applied at the top square grid district proves that $h=0$ near the left-most upper vertical segments of that district.

\underline{\textbf{Step 2: Boundary behavior of discrete functions $F^{\delta}$}}
\medskip

For discrete observables $F^\delta$, at a boundary quad $z_{\partial} \in \cS^\delta (\diamondsuit(G))$ of the arc approximating the bottom horizontal segment of the square grid district, the increment of $H_{F^{\delta}}$ between two consecutive (from left to right) vertices of $\cS(G^{\circ}) $ vanishes identically (this is a direct consequence of \eqref{eq:HF4-bc'}). That same increment is also  positively proportional to $\Im [F^{\delta}(z_{\partial})^2]$. This allows to conclude that  $F^{\delta}(z_{\partial})^2 \in \mathbb{R}$ and one can even go beyond that observation, as the value of the boundary argument of $F^{\delta}(z_{\partial})$ is given directly by the formula \cite[Lemma 5.3]{Che20}, which implies that $F^{\delta}(z_{\partial})\in \mathbb{R}$. In particular $F^{\delta}(z_{\partial})$ is purely real at the boundary, which means that $z'\mapsto \Im[F^{\delta}(z')] $ vanishes identically at the bottom boundary of the south square grid district. 

In the square grid district, $z'\mapsto \Im[F^{\delta}(z')]$ is a martingale for the standard random walk on quads (i.e. the probability to leave from $z\in \diamondsuit(G)$ to one of its four neighbors is $\frac{1}{4}$). This is a simple implication of the discrete Cauchy-Riemann equation satisfied by $F^{\delta}$ (see e.g. \cite[Equation (3.1)]{CHI}). Set $z_0=(35L_0;-40L_0+s)$ and denote by  $\mathcal{R}_{s^{\frac14}}^{\delta} $ the square of width $2s^{\frac14}$ centered at $(35L_0;-40L_0++s^{\frac14}) $. Using the standard gamblers ruin estimates for random walks on $\mathbb{Z}^{2}$, the probability that the random walk associated to $\Im[F^{\delta}(z')]$ leaves $\mathcal{R}_{s^{\frac14}}^{\delta} $ from its top side is $O(s^{\frac{3}{4}})$. On the other hand, using uniform crossing estimates for the standard random walk, the probability that this walk leaves $\mathcal{R}_{s^{\frac14}}^{\delta} $ from one of its vertical sides at a vertical distance $\ell$ from the bottom boundary of $\mathcal{R}_{s^{\frac14}}^{\delta} $ is bounded by $\ell^{\frac{1}{2}+\varepsilon} $, for some $\varepsilon >0 $ which is independent from $s$ and $\tilde{\delta}$ (the mesh size of the square lattice in the southern district). We apply now the optional stopping theorem to the time the walk started at $z_0$ exits $\mathcal{R}_{s^{\frac14}}^{\delta} $ to reconstruct the value $\Im[F^{\delta}(z_0)]$. One has
\begin{itemize}
\item The contribution to $\Im[F^{\delta}(z_0)]$ coming from the bottom side of $\mathcal{R}_{s^{\frac14}}^{\delta} $ vanishes identically as $\Im[F^{\delta}]$ vanishes there (which is a consequence of the discussion above).
\item The contribution of the top side is bounded by $O(s^{\frac{3}{4}}\cdot s^{-\frac{1}{8}} )$ as $F^{\delta}$ is bounded there by  $s^{-\frac{1}{8}}$ in the top segments of $\mathcal{R}_{s^{\frac14}}^{\delta} $.
\item The contribution of the vertical sides is polynomially small in $s$, as the probability to leave from one of the vertical sides at a height $\ell $ (dashed pinked path in Figure \ref{fig:contradiction}) from the bottom side is bounded by $\ell^{\frac{1}{2}+\varepsilon} $ and we have the bound $|F^{\delta}(z)|=O( \text{dist}(z,\partial \Omega)^{-\frac{1}{2}}) $. To get this bound on the southern district formed by a square grid of mesh size $\tilde{\delta}$, one has to separate two cases. 
\begin{itemize}
\item If the distance from $z$ to the bottom boundary is larger than $\text{cst}\tilde{\delta} $ (for some uniform constant $\text{cst}$), one can apply the first alternative of Theorem \ref{thm:F-via-HF}.
\item if the distance from $z$ to the bottom boundary is smaller than $\text{cst}\tilde{\delta} $, the reconstruction of $F^\delta$ via $ X^\delta$ given in \eqref{eq:F-from-X} ensures directly that $F^\delta = O(\tilde{\delta}^{-\frac12})$.
\end{itemize}
 
\end{itemize}
All together, this ensures that $|\Im[F^{\delta}(z_0)]| = O(s^{\beta''})$ for some positive exponent $\beta''$. Sending first $\delta $ to $0$ and then $s$ to $0$ implies that $f$ is purely real at $(35L_0;-40L_0)$ and thus has to vanish  at that point, as it is also purely imaginary according the step 1 of the proof given above). Repeating the same reasoning ensures $f$ in fact vanishes in the entire bottom arc of the south square grid district contained in $\hat{\Omega}^{(d_0)}$. Since $f$ is holomorphic in the square grid district and vanishes on a boundary arc, it vanishes everywhere in the bottom square grid district. This implies that its primitive $I_{\mathbb{C}}$ is constant in that region. 
\bigskip

\underline{\textbf{Step 3: Final contradiction using the Stoïlov factorization}}

We are now in position to get the final contradiction. Consider the function $g(\zeta)=I_{\mathbb{C}}(\zeta)$ defined by \eqref{eq:Ic-to-phi-change} in the $\zeta $ conformal parametrization of the surface $(z,\vartheta(z))$. As explained in Section \ref{sub:shol-limits}, $g$ satisfies a conjugate Beltrami equation \eqref{eq:g_beltrami} with a Beltrami coefficient which is bounded away from $1$ (this bound only depends on $\kappa$). Moreover $g$ is constant in the bottom square grid district. Writing the Stoilov factorization $g= \underline{g} \circ p $ with $p:\hat{\Omega}^{(d_0)} \rightarrow \hat{\Omega}^{(d_0)}$ a $\beta(\kappa)$-Hölder homeomorphism and $\underline{g}$ an holomorphic function, the function $\underline{g}$ is constant in the south square grid district, thus constant everywhere in $\hat{\Omega}^{(d_0)}$. In return, this proves that $f$ vanishes everywhere in $\hat{\Omega}^{(d_0)}$. This contradicts the change of boundary values from $0$ to $1$ for the function $h=\int \Im [f^2 dz]+ |f|^2 d\vartheta $ between the north and the south square grid districts.\begin{figure}
\includegraphics[clip, scale=0.60]{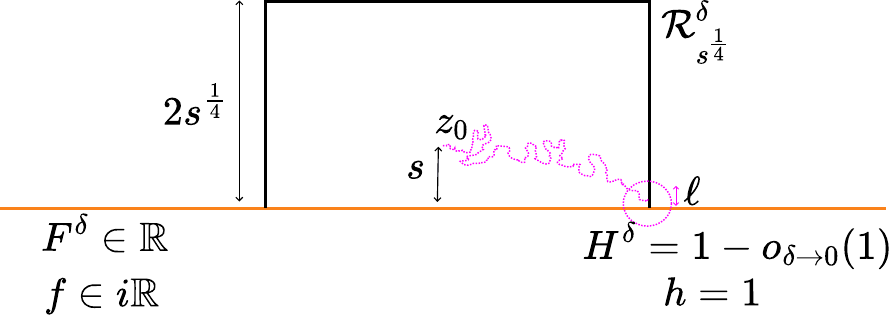}

\caption{Local notation related to the boundary analysis at the south square grid district. The sign of the outer derivative are opposite for discrete $H^\delta $ and continuous $h$, imposing that the continuous observable vanishes at that horizontal arc. The dashed pink path represents the events where the random walk started at $z_0$ exists $\mathcal{R}_{s^{\frac14}}^{\delta} $ at a distance smaller than $\ell$ from the bottom boundary of the square grid district.}
\label{fig:contradiction}
\end{figure}

\subsection{Contradiction in the scenario (B) and conclusion of the proof}

Recall that it is still possible that Theorem \ref{thm:positive-magnetization} fails under scenario (B) of the alternative stated at the beginning of Section \ref{sec:proof}, meaning there exist a sequence of s-embeddings $(S^{\delta_k})_{k\geq 0} $ such that the boundary correlation vanishes as $k\rightarrow \infty $ while $\hat{\delta}_k= \delta_k \rho_{k}^{-1}$ is bounded from below. We will rule out this scenario with simple statistical mechanics arguments. Recall that we scaled the lattice such that $\rho =1 $. In particular in that case all the $\delta_k $ are bounded from below (independently of $k$). In particular for each $k$ there exist a path of neighboring quads from top to bottom of the square $\mathcal{C}_0 $ formed by quads whose radius is \emph{bounded from below}. Such path has two features: first it has a bounded number of steps, only depending on $\kappa$ (one can easily see this as the area of each of such quad is bounded from below and the total area of $\mathcal{C}_0 $  is bounded from above). Moreover, the associated Ising coupling constant along such path are bounded away from $0$ and infinity, as one can see from the reconstruction of the Ising parameter from the geometry of the embedding given in \eqref{eq:theta-from-S}. One can now use the finite energy property for the FK-Ising model to ensure that the FK connection probability from the top to the bottom of $\mathcal{C}_0 $ is bounded from below, which rules out scenario (B).

\medskip

We are now in position to conclude. Using monotonicity arguments in scenario (A) or repeating the above arguments in scenario (B) implies the existence of the announced $p_0>0$ for the domain delimited by $\mathcal{N}_{\delta} $, $\mathcal{W}_{\delta} $, $\mathcal{S}_{\delta} $ and $\mathcal{E}_{\delta} $ with four alternating arcs (here the 'vertical' \emph{free} arcs correspond to the parts of $\mathcal{W}_{\delta} $ and $\mathcal{E}_{\delta} $ between the slicing levels) and thus transfer directly a similar estimate to discretizations of standard rectangles. This finishes the proof. Let us point out that, in order to propose a simplified description of the construction extended domain $\mathcal{R}^{\delta}_{\mathbf{ext}} $ and its extension, we actually proved a lower bound on boundary correlations in rectangle of aspect ratio (between vertical and horizontal side) which actually close to $\frac{18}{19}$ and not $3$ as originally stated in Theorem \ref{thm:positive-magnetization}. Simple modifications in the construction of the extended domain treat the original problem with the exact same proof.

\section{Construction of the embedding in explicit setups and further perspectives}

In this short section, we explain in greater details how the s-embedding setup applies in already known and new contexts.

\subsection{Description of the embedding procedure in several cases}\label{sub:explicit-construction}

\medskip

\textbf{\newline Critical isoradial lattice}

Recall that an isoradial embedding is an embedding in the plane such that each face is inscribed in a circle, the center of each circle is inside the face, and all the circles radii are equal (say to $\delta$). In that case, the graph $\Lambda(G)$ is a \emph{rhombus} tiling. When the grid is equipped with critical Z-invariant weights introduced by Baxter in \cite{Baxter-book} (invariant under star-triangle transformations, see \cite{BdTR1,BdTR2} for a modern exposition), counterparts to Theorems \ref{thm:positive-magnetization} and \ref{thm:RSW} were proven in \cite{ChSmi2}, under the bounded-angle assumption,  starting from a solution to \eqref{eq:3-terms} being $X(c):=\delta \eta_c $, defined in \eqref{eq:def-eta}, with the arbitrary embedding $\cS $ being in that case one isoradial embedding. In particular one can see that in that setup $\rho = O(\delta) $ where $O$ only depends on the bounds used of the bounded angle property. Removing the bounded angles assumption and replacing it with \ExpFat\, allows to apply directly \cite[Proposition 5.1]{DCLM-universality} to derive the box crossing property for the Quantum Ising model on $\mathbb{Z} \times \mathbb{R} $.

\textbf{Massive isoradial lattice}

In this case, we still work on an isoradial grid (chosen with the bounded angles property) but this time being equipped with near critical weights, scaled such that the \emph{nome} equals $q = \frac{m\delta}{2} $ (see \cite[Section 1]{park-iso} or \cite[Equation (1.1)-(1.3)]{Cim-universality}) on the isoradial embedding of mesh size $\delta$. In the classical formulation of the Ising model, it corresponds to looking at the homogeneous model on $\frac{1}{n}\mathbb{Z}^2 $ at the uniform inverse temperature $\beta_c + \frac{m}{2n} $. In this framework, the parameter $m$ is called the mass and measures how far from criticality is the system. Using the so-called discrete exponentials introduced and studied in \cite{BdTR1,BdTR2} as solution to \eqref{eq:3-terms}, one can re-embed the near critical model and construct an s-embedding as done in details in \cite[Section 3.3]{Cim-universality}. In particular, in the embedding given in \cite[Theorem 3.19]{Cim-universality}, the space-like surface constructed in the Minkowski space $\mathbb{R}^{2,1}$ (see discussion in the next sub-section) is \emph{of mean-curvature equal to the mass} $m$ (scaled in the surface embedded in $\mathbb{R}^{2,1} $). It is not hard to see in the formula \cite[Equation (3.12)]{Cim-universality} that, when the mass $|m|\rightarrow \infty$, the smallest possible Lipschitz constant for the origami map $\cQ^\delta $ gets closer and closer to $1$. This fact should be compared with the results of \cite[Section 6]{park-iso} that state that crossing probabilities go respectively to $0/1$ when $|m|\rightarrow \infty$, approaching the off-critical regime. In this case, crossing estimates are not bounded away from $0$ and $1$ while the optimal Lipschitz constant for the origami map gets closer and closer to $1$. In that setup $\rho = O(\delta) $ where $O$ only depends on the bounds used of the bounded angle property and the mass parameter $m$.

\textbf{Critical double-periodic lattice}

In that setup, the criticality condition was derived in \cite{cimasoni-duminil} by Cimasoni and Duminil-Copin. The question of finding an embedding for the critical model was settled by Chelkak in \cite[Lemma 2.3]{Che20}. In this lemma, it is used that the real linear vector space of periodic solutions to \eqref{eq:3-terms} is two dimensional, and one particular cleverly chosen complex combination of a given basis of that two dimensional vector space allows to construct the canonical embedding, which is in particular a doubly-periodic in the $\cS $ plane. Let us mention that from the crossing estimates perspective, such clever choice a complex combination in the vector space is \emph{not} needed as any non trivial complex combination of periodic linear independent solutions to \eqref{eq:3-terms} leads to a proper non-degenerate s-embedding (see \cite[Lemma 2.3]{Che20}), and produces an origami function which grows linearly with a Lipschitz constant strictly smaller than $1$. In that setup, one can chose $\rho = O(\delta) $ where $O$ only depends on how far the weights of the fundamental domain remain bounded away from $0$ and infinity.

\textbf{Lis' circle patterns}

In \cite{lis-kites}, Lis introduces the notion of Ising model on circle patterns depending on some inverse temperature $\beta$, the latter being constructed as follows (translating the notation of the original paper to our framework). A circle pattern is a pair of planar graphs $G=(G^\bullet,E) $ and $G^\star=(G^\circ,E^\star)$ mutually dual to each other, embedded in the complex plane in such a way that each face of $G^\star$ is inscribed in a circle inside the closure of the face, and the vertices of $G$ lie at the center of the circles. In that case, the faces of the graph $\Lambda(G)=G^\circ \cup G^\bullet $ are \emph{kites}, which fits into the s-embeddings framework and the weights fit at the inverse temperature $\beta =1$. Seen as an s-embedding, the origami map is constant (set e.g. to vanish) on $\cS(G^\circ) $ and takes the edge-length of the kites on $\cS(G^\bullet)$. In particular in the case of graphs with non-smashed angles and edge-lengths of kites comparable to some $\delta $ (assumption \Unif\,, in \cite{Che20}), one can easily see that $\cQ^\delta = O(\delta)$. In that setup one can take $\rho = O(\delta) $ where $O$ only depends on the bounded angle property of the kites. Moreover, our theorem allows to study the model without those bounded angle and edge-length comparability properties, in particular in the case of \emph{circle packings} presented in \cite[Example 2]{lis-kites}, coming potentially from wilder graphs as one can see in the next example.

\textbf{Circle patterns coming from finite random planar triangulation}

We continue the previous discussion, but this time applying Lis' circle patterns weights to a random triangulation with boundary. We keep exactly here the notations of \cite{GGJN}. The discrete mating-of-trees,  introduced by Duplantier, Gwynne, Miller and Sheffiled \cite{duplantier2021liouville,gwynne2021tutte} is a model of random triangulation in the plane of the vertex set $\varepsilon \mathbb{Z} $. This random triangulation, indexed by a positive number $\varepsilon >0$ and $\mathcal{G}^\varepsilon $, is built using a pair of correlated Brownian motions (see \cite[Section 1.2]{GGJN} for a short introduction to that setup), and belongs to the universality class of a $\gamma$-LQG surface (with $\gamma \in ]0;2[$). Doing minor modification procedures as in \cite[Section 1.2]{GGJN} (at the boundary of a finite set and removing some double edges), the work of \cite{GGJN} explains how to extract from $\mathcal{G}^\varepsilon $ a finite triangulation with boundary, denoted $(\mathcal{G}^{\varepsilon}_2, \tilde{\rho})$, with a special marked interior vertex $\tilde{\rho}$ (which corresponds to a particular vertex in the construction of the random triangulation from $\mathcal{G}^\varepsilon $). Then, $(\mathcal{G}^{\varepsilon}_2, \tilde{\rho})$ can then be circle packed in the unit disc $\mathbb{D}$, uniquely up rotations provided we impose that the circle $C_{\tilde{\rho}}$ is centered at the origin. This circle packing is denoted by $\mathcal{P}_{\tilde{\rho}}^\varepsilon = \{ C_v \}_{v\in \mathcal{V}\mathcal{G}^{\varepsilon}_{2}} $. One can now apply Theorem 1.4 of \cite{GGJN} ensure that, with probability $1-o_{\varepsilon \to 0}(1)$, the maximal size of one circle in $\mathcal{P}_{\tilde{\rho}}^\varepsilon$ is of order $O(\log(\varepsilon^{-1})^{-1})$, with the estimates $O$ and $o$ only depend on $\gamma \in ]0;2[$.

We are now going to use Lis' critical weights on circle patterns to decorate the faces of this rooted circle packed triangulation (see Figure \ref{fig:circle-packing}) with an Ising model. Set the vertices of $G^\bullet $ to be located at the center of the circles of $\mathcal{P}_{\rho}^\varepsilon$ and while the vertices of $G^\circ $ correspond to the center of the circles of the dual triangulation, which correspond here to the centers of the inscribed circles attached to faces of that triangulation. This means that each vertex of $G^\circ $ in fact corresponds exactly to one face of the original circle packing. This context falls into Lis' framework of circle patterns, and the graph $\Lambda(G) $ is indeed formed of kites. We can then assign a coupling constant $x(e)=\tan \frac{\theta_e}{2} $ to the edge of the triangulation associated to the quad $z_e \in \Lambda(G)$, where the abstract angle $\theta_e $ is given by the angle formula \eqref{eq:theta-from-S}. 

Set $\delta = -\log(\varepsilon^{-1})^{-1} $. Then with probability $1-o_{\delta \to 0}(1) $, one has the origami map that satisfies $\cQ=O(\delta)$. One still needs to check that some kind of \ExpFat\, condition holds to be able to apply Theorem \ref{thm:RSW}. This can done using the SLE/LQG estimate coming from \cite[Proposition 6.2]{holden2018sle} (see also \cite[Lemmas 2.5-2,6]{gwynne2021tutte}), that state that with probability $1-o_{\varepsilon \to 0}(1) $ (which can be made explicit in $\varepsilon $), the triangulation contains at most a polynomial power of $\log(\varepsilon)^{-1}$ vertices. Thus in that context, one can chose $\rho = O(\delta^\tau) $, where the constant $O $ and $\tau$ only depend on the explicit bounds in the $o_{\varepsilon \to 0}(1)$ estimates (and thus on $\gamma$). 

In conclusion, decorating the circle packing $\mathcal{P}_{\rho}^\varepsilon$ with weights coming naturally from the associated circle pattern, the graph satisfies, with probability $1-o_{\varepsilon \to 0}(1) $, the usual RSW property is fulfilled starting at a polynomial distance compared to $\log(\varepsilon)^{-1} $.

\medskip

\paragraph{\textbf{Layered model in the zig-zag grid}}

We discuss here the notion of s-embedding attached to the so-called Ising model on the zig-zag lattice, defined and studied in greater details in \cite[Section 5.2]{CHM-zig-zag-Ising}, (see also Figure \ref{fig:circle-packing} for a more explicit construction). One can start with the (abstract, meaning with no geometrical interpretation of the weights) rotated square grid (by $\frac{\pi}{4}$) and consider the Ising model on faces of that graph, chosen with all coupling constants attached to the edges separating the spins of two neighboring verticals columns $C_{k} $ and $C_{k-1}$ being the same and equal to some $x_{k}$. The collection of coupling constants $(x_{k})_{k \in \mathbb{Z}} = (\tan(\theta_k))_{k \in \mathbb{Z}}$ defines an Ising model on the faces of $e^{i\frac{\pi}{4}}\mathbb{Z}^2$. One natural question is to find a proper s-embedding attached to this model and describe one of its realizations in the plane. It is possible to make such a construction using basic euclidean geometry as in \cite[Figure 5]{CHM-zig-zag-Ising} or Figure \ref{fig:circle-packing}  by pasting layers of tangential quadrilaterals with an inscribed circle of radius $1$. In that case, the embedding and origami map are easy to compute and give rise to an explicit formulae. The vertical increments of $\cQ$ between vertices of the same color in $\cS(\Lambda(G))$ vanishes exactly, while the horizontal ones for $\cS $ and $\cQ $ between the columns  $\cS(C_l) $ and $ \cS (C_n) $ are given by the formulae (see \cite[Section 5.2]{CHM-zig-zag-Ising})
\begin{equation}
\cS(C_n)- \cS(C_l)= \sum^{n}_{k=l}\prod_{p=l+1}^{k} \tan^2(\theta_p) + \sum^{n}_{k=l}\prod_{p=l+1}^{k} \cot^2(\theta_p)
\end{equation}
\begin{equation}
\cQ(C_n)- \cQ(C_l)= \sum^{n}_{k=l}\prod_{p=l+1}^{k} \tan^2(\theta_p) - \sum^{n}_{k=l}\prod_{p=l+1}^{k} \cot^2(\theta_p)
\end{equation}

In particular, provided the ratios $\frac{\sum^{n}_{k=1}\prod_{p=1}^{k} \tan^2(\theta_p)}{\sum^{n}_{k=1}\prod_{p=1}^{k} \cot^2(\theta_p)} $ and $\frac{\sum^{n}_{k=l}\prod_{p=1}^{k} \tan^2(\theta_p)}{n} $ remain bounded away from $0$ and $\infty $, usual RSW property holds at any scale. This observation can be applied to two particular cases:

\begin{itemize}
\item The massive homogeneous model (already described above) on the square lattice. Indeed, in that case $\tan(\theta)=1+\frac{\text{cst}}{n}+o(n^{-1})) $ and straightforward computations allow to conclude. The RSW property holds at any scale as explained above. We also easily see that when $\text{cst} $ diverges to infinity, the optimal Lipchitz constant converges to $1$.
\item The I.I.D near critical model with coupling constants typically much larger than the correlation length. More precisely, sort first a sequence $(Z_k)_{k \in \mathbb{Z}}$ I.I.D $\pm 1 $ Rademacher centered variables. Consider then on the box $[0;n]^2$ on the \emph{standard square lattice embedding} (meaning here that the weights have no geometrical meaning), with uniform coupling constant chosen such that $\tan(\theta_k)=1+\frac{Z_k}{n^\alpha} $ for some $\alpha  \in ]\frac{1}{2};1[ $. If the coupling constant were deterministic such that $\tan(\theta_k)=1+\frac{\textrm{cst}}{n^\alpha} $, then the results of \cite[Section 6]{park-iso} ensure that the crossing probabilities would converge to $0$ or $1$ (depending on the sign of $\textrm{cst} \neq 0$. Still, in this context where we scale the lattice and the temperature at the same time, randomones between layers helps averaging, as one can easily from the central limit theorem that
$\prod_{p=1}^{k} \tan^2(\theta_p)$ is bounded away from $0$ and infinity, which in particular ensures that it falls in the aforementioned setup. Concretely, this gives an example (that is new, to the best of the author's understanding) where randomness in successive layers averages well enough to keep the systeme critical (from the percolation perspective). In our case, RSW property holds almost surely at any scale, but the constants in Theorem \ref{thm:RSW} a priori depend from the output of the sequence $(Z_k)_{k \in \mathbb{Z}}$ (one can still in principle find almost sure bounds at some larger scale or work with high probability, but we do not work it out here and leave it to the interested reader).
\end{itemize}

\medskip

\textbf{Construction of new critical grids using pairs of s-holomorphic functions}

We present now new and simple method to construct new proper s-embeddings that satisfy the RSW property out of already existing ones. We are grateful to Dmitry Chelkak and SC Park for discussions that lead to this construction (which will be detailed in a subsequent work, jointly with Park), and focus only here on the spirit of the construction. We present it in the case where the original embedding is the square lattice, but the technique applies in principle in a more general setup given a sequence of proper s-embeddings satisfying \LipKd\, (note that one should be careful with uniqueness statements in the case the limiting origami map $\vartheta$ is a very rough function). Our first input will be to find a way to discretize an holomorphic function $f$ as a limit of s-holomorphic function $(F^\delta)_{\delta >0} $ in the unit disc $\mathbb{D}$. Set $I = \int f(z)dz $ the primitive of $f$ that vanishes at the origin and consider $N^\delta $ the discrete harmonic extension of $\Re[I] $, coming from the boundary values of $\Re[I] $ on the boundary of $(\cS+i\cQ )\cap \overline{\mathbb{D}} $. Here, the harmonic function is related to the forward random walk associated to the Laplacian given in \cite[Definition 2.15]{Che20} on the S-graph $(\cS+i\cQ )$ (see \cite[Section 2.5]{Che20} for explicit references), which satisfies the so-called \emph{uniform crossing estimates} (see \cite[Section 2.3-2.6]{Che20}). The maximum principle ensures that the family  $(N^\delta)_{\delta >0}$ is bounded thus precompact in $\mathbb{D}$, and the regularity theory for t-holomorphic functions developed in \cite[Section 6]{CLR1}) ensures that the family of its gradients $(F^\delta)_{\delta >0} $ are bounded s-holomorphic (thus also precompact) in $\mathbb{D}$. It is the not hard to see (using uniform crossing estimates of the random walk associated to $N^\delta $) that functions $N^\delta $ and $F^\delta$ converge respectively to $\Re[I] $ and $f$ uniformly on compact of $\mathbb{D}$ (repeating e.g. the arguments of \cite{ChSmi1}).

We are now going to construct a proper s-embedding coming from a \emph{discrete Weierstrass parametrization} of a space-like surface in $\mathbb{R}^{2,1}$, following the route of \cite[Section 3.3]{Cim-universality}. Fix $f$ and $g$ two holomorphic functions on $\mathbb{D} $ such that $\Im[\overline{f}g] >0 $, respectively discretized by $F^\delta $ and $G^\delta$. Then setting (as in \cite[equation (3.11)]{Cim-universality}
\begin{equation}
(\Re[\cS], \Im[\cS], \cQ)=\frac{1}{2}(\Im \int 2F^\delta G^\delta,\Im \int (F^\delta)^2- (G^\delta)^2, \Im \int (F^\delta)^2 + (G^\delta)^2),
\end{equation}
constructs an s-embedding (the integration procedure we have here is understood as \eqref{eq:HF-def}). Using the identification \eqref{eq:F-from-X} we have $\cX = \varsigma (\cX_{f} - i \cX_{g}) $, $\Re[\cS_{\cX}]=2H[\cX_{f},\cX_{g}] $, $\Im[\cS_{\cX}]=H_{\cX_{f} }-H_{\cX_{g} }$ and $\cQ_{\cX}=H_{\cX_{f} }+H_{\cX_{g} }$. It is easy to see from \cite[Proposition 3.20]{Che20} that all the faces of $\Lambda(G) $ in the associated s-embedding are oriented in the same way, they all satisfy an assumption of the kind \Unif\ and that the associated s-embedding is proper provided $\delta $ is small enough (using the argument principle to prove properness as for the massive square lattice treated in \cite[Proposition 3.20]{Che20}). 

The argument principle to ensure properness of the embedding, presented in a very complete manner in \cite[Appendix]{CLR2} can be roughly summarized that one can derive properness (meaning no overlap between different faces) of a piece of an s-embedding realization inside a region delimited by a discrete oriented contour $\mathcal{C}_n $ if the image of a twice bigger oriented contour $\mathcal{C}_{2n} $ (from the graph distance) in the s-embedding realization only winds once around the image of $\mathcal{C}_n $ in the s-embedding realization. We hope this statement could be used as a starting point to study how the embedding procedure evolves when one performs small modifications in the linear system \eqref{eq:3-terms}.

\medskip

\paragraph{\textbf{Finite graphs}}

We discuss now the existence of a proper s-embedding for a given finite graph, following \cite[Section 7]{KLRR}. One of the important output of \cite{KLRR} is that if one starts with bipartite weighted planar graph with an outer face of degree $4$, it is possible to find a t-embedding of its dual. The dimer model on faces of that t-embedding has edge weights which are gauge equivalent to edge-lengths of t-embedding. There, the construction is made by an algorithm, placing new points of the embedding step by step using Miquel's six-circles theorem. Start with a weighted finite planar graph $(G,x)$ with a marked face which corresponds to its unbounded region of one of its embedding $\mathcal{E} $  into the plane and proceed as follows.

\begin{itemize}
\item Up to adding at most $3$ faces to the unbounded face of $\mathcal{E} $, one can assume that the external boundary of that unbounded face in the graph $(G,x)$ has $4n$ edges.  
\item Weld \emph{abstractly} the obtained outer-face with the inner boundary of $ [-2n;2n]^2 \setminus [-n;n]^2 $. Moreover, declare the edge-weights between the added faces to be those of the critical square lattice. This creates an $(\tilde{G},\tilde{x})$ and now an outer face with a boundary of $16n$ edges.

\item Consider the graph $\Lambda(\tilde{G})= \tilde{G}^\bullet \cup \tilde{G}^\circ $ and declare the outer-face of that graph to be the quad of $\Lambda(\tilde{G})$ that corresponds an edge of the box $\Lambda_{2n} $ (before the welding) near $(\frac{3}{2}n;\frac{3}{2}n) $.

\item Now one has a bipartite graph with outer face of degree $4$ and can then apply the construction of \cite{KLRR} to construct the associated t-embedding of the associated dimer model under the bozonisation identities of \cite{dubedat2011exact}. It is possible to use  \cite[Section 7]{KLRR} and obtain a proper s-embedding whose edge weights correspond to the one of $(\tilde{G},\tilde{x})$, except at that new marked faces. In particular, this provides a solution to the propagation equation \eqref{eq:3-terms} except at that marked outer face.
\end{itemize}

Extracting the picture coming associated to original part of $(G,x) $ provides a proper s-embedding. Let us note that this abstract welding to a piece of the critical square lattice is simply done to recover an embedding of the entire original graph $(G,x) $ and could be in principle made with another graph.

\begin{figure}

\begin{minipage}{0.48\textwidth}
\includegraphics[clip, width=0.8\textwidth]{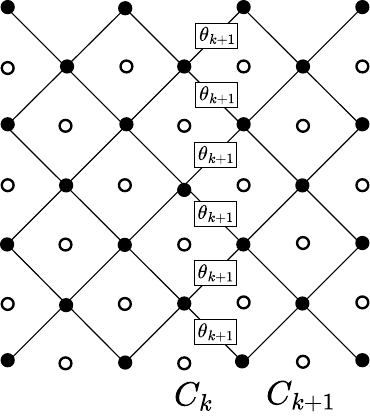}
\end{minipage}
\begin{minipage}{0.48\textwidth}
\includegraphics[clip, width=1\textwidth]{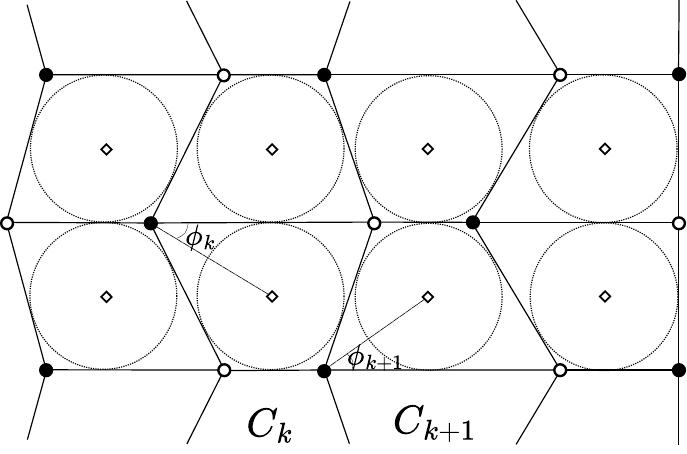}
\end{minipage}
\begin{minipage}{0.48\textwidth}
\includegraphics[clip, width=1\textwidth]{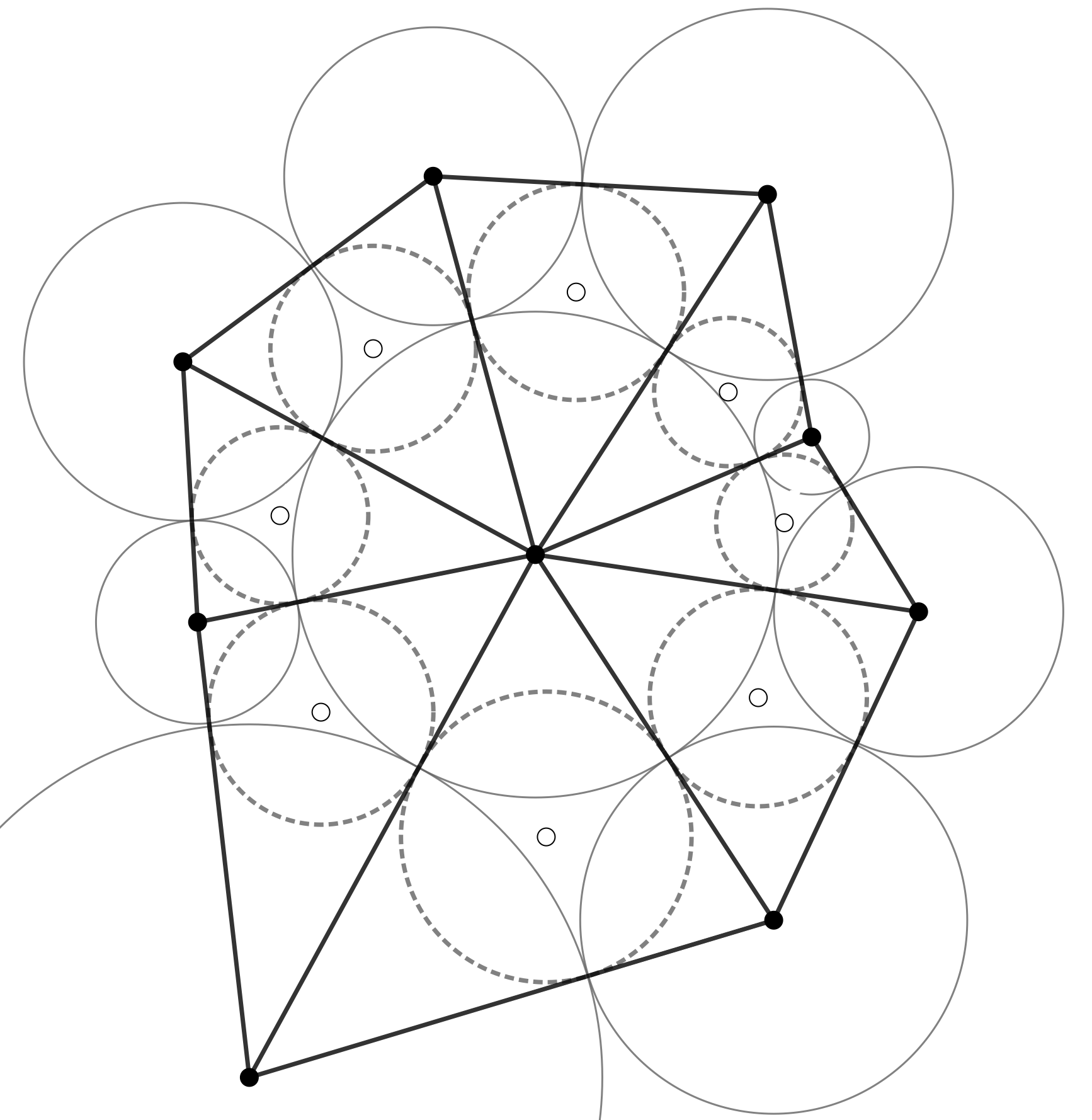}
\end{minipage}
\caption{TOP: Layered model in the zig-zag plane, with one associated proper s-embedding. All the coupling constants between the columns $C_k$ and $C_{k+1}$ are equal to $x_{k+1}= \tan \theta_{k+1}$. One constructs iteratively the layers of the s-embedding with inner circles of radius $1$. Denoting by $\phi_k $ the half-angle that links one of the black vertices of $C_k$ to the center of the quad, the link between the abstract Ising parameter (rewritten via \eqref{eq:x=tan-theta}) and the geometrical angle $\phi_k $ is given by the relation $\tan \phi_{k} = \tan^2 \theta_k \cdot \tan \phi_{k+1} $ as showed in \cite[Equation (5.6)]{CHM-zig-zag-Ising}. BOTTOM: Circle pattern associated to a circle packing. The edges of the triangulation are in solid black, the circles of the packing are in solid grey, while the circles of the dual packing are in dashed grey. Each edge of the triangulation corresponds to a kite in $\Lambda(G)$, which allows to decorate faces of the triangulation with Ising weights coming from \eqref{eq:theta-from-S}. }\label{fig:circle-packing}
\end{figure}

\subsection{Embedding as space-like surface in the Minkoski space and optimality of the assumption \LipKd\,}\label{sub:optimality}

One can wonder it if is possible to prove the same RSW type estimate without an assumption similar to \LipKd\,. The answer to this question is negative and proves the necessity of this kind on assumption on the origami map to prove crossing estimates. To begin with, one can wonder about concrete examples where it appears to fail. Consider first the off-critical homogeneous model on the square lattice (at a fixed $\beta \neq \beta_c $). The RSW box-crossing property classically fails there. Using formulae in the homogeneous layered model presented above (see also \cite[Figure 2]{Che20}), one easily sees that $\limsup_{z\rightarrow \infty} |\frac{\cQ(z)}{\cS(z)}|=1 $, which indicates that the assumption \LipKd\, fails, together with the box-crossing property. We are now going to give a more conceptual explanation to such phenomenology, as explained briefly by Chelkak in several remarks of \cite{Che20}, and is related to more natural way to embed the Ising model as space-like surface $(\cS,\cQ) $ in $\mathbb{R}^{2,1} $. Start with a proper s-embedding $\cS=\cS_{\cX}$, whose origami map is $\cQ=\cQ_\cX $. Beyond simple transformation that preserve the s-embedding setup presented above, one can note that \cite[Remark 1.2]{Che20}, replacing $\cX$ by $ \tilde{\cX}(t)= (1-t^2)^{-\frac{1}{2}} (\cX + t \overline{\cX}) $ for $|t|<1$ provides another proper s-embedding of the same statistical mechanics model. The functions $\cS $ and $\cQ $ are now replaced following the rule
\begin{equation}\label{eq:Minkowski-iso}
(\Re \cS, \Im \cS, \cQ) \mapsto ( \frac{1+t^2}{1-t^2} \Re \cS + \frac{2t}{1-t^2} \cQ, \Im \cS,  \frac{2t}{1-t^2}\Re \cS + \frac{1+t^2}{1-t^2} \cQ ).
\end{equation}
When viewed as a space-like surface in $\mathbb{R}^{2,1} $, the surfaces $(\cS_{\cX},\cQ_{\cX}) $ and $(\cS_{\tilde{\cX}(t)},\cQ_{\tilde{\cX}(t)}) $ are isometric in $\mathbb{R}^{2,1}$. Let us recall that both embedding correspond to the \emph{same} statistical mechanics model and have exactly the same connectivity properties. We are now going to explain how the existence of such isometries imply the necessity of \LipKd\,. One can start with a toy example, which turns out to give the correct reasoning in full generality. Consider the critical square lattice, with its usual square embedding of mesh size $\sqrt{2}\delta $. In that case $\cQ =\delta$ on $\cS(G^\bullet) $ and $\cQ =0$ on $\cS(G^\circ)$. One can apply the transformation of \eqref{eq:Minkowski-iso} for $t$ close to $1$. The change of coordinates ensures that the surface $(\cS_{\tilde{\cX}(t)},\cQ_{\tilde{\cX}(t)}) $ becomes closer and closer to the light cone in $\mathbb{R}^{2,1} $, meaning the optimal Lipschitz constant for the associated origami map gets closer and closer to $1$ as $t\rightarrow \infty $. Considering an $[0;1]^2 $ box over this newly constructed s-embedding in the $\cS^{\delta}_{\tilde{\cX}(t)}$ plane, it corresponds to a smashed rectangle (up to $ O(\delta)$)  $[0;(1+t^2)^{-1}(1-t^2)]\times [0;1] $ in the $\cS^{\delta}_{\cX}$ plane, where the RSW property fails (for top to bottom crossing) for  $t^\delta= (1- \delta^\frac12)^\frac12 $. Such stretching procedure ( replacing $\cX$ by $(1-t^2)^{-\frac{1}{2}} (\cX + t \cX)$ with $|t| \rightarrow 1 $) is a fully general construction and one can easily see from \eqref{eq:Minkowski-iso} that it always leads to an optimal Lipchitz constant close to $1$, while being clearly not compatible with Theorem \ref{thm:RSW}. Thus one cannot bypass the use of the assumption \LipKd\, on the origami map.

\printbibliography

\end{document}